\newcommand{\eop}{\bigstar}  
\newcommand{\cf}{{\rm cf}}
\newcommand{\D}{{\mathcal D}}
\newcommand{\B}{{\mathcal B}}
\newcommand{\OO}{{\mathcal O}}
\newcommand{\la}{\langle}
\newcommand{\ra}{\rangle}
\newcommand{\bigbox}{\square}
\newenvironment{proof}{\noindent{\bf Proof.}}{\par\bigskip}
\newenvironment{proof-}{\noindent{\bf Proof}}{\par\bigskip}
\newtheorem{THEOREM}{Theorem}[section]
\newtheorem{Conclusion}[THEOREM]{Conclusion}
\newtheorem{Hypothesis}[THEOREM]{Hypothesis}
\newtheorem{LEMMA}[THEOREM]{Lemma}
\newtheorem{Main Theorem}[THEOREM]{Main Theorem}
\newenvironment{main Theorem}{\begin{Main Theorem}} 
{\end{Main Theorem}}
\newtheorem{Theorem}[THEOREM]{Theorem}
\newenvironment{theorem}{\begin{Theorem}}{\end{Theorem}}
\newenvironment{thm}{\begin{Theorem}}{\end{Theorem}}
\newtheorem{Definition}[THEOREM]{Definition}
\newenvironment{definition}{\begin{Definition}}{\end{Definition}}
\newenvironment{defn}{\begin{Definition}}{\end{Definition}}
\newtheorem{Conventions}[THEOREM]{Conventions}
\newtheorem{Main Definition}[THEOREM]{Main Definition}
\newenvironment{main definition}{\begin{Main Definition}}
{\end{Main Definition}}
\newtheorem{Lemma}[THEOREM]{Lemma}
\newenvironment{lemma}{\begin{Lemma}}{\end{Lemma}}
\newtheorem{Notation}[THEOREM]{Notation}
\newtheorem{Convention}[THEOREM]{Convention}
\newtheorem{Note}[THEOREM]{Note}
\newtheorem{Observation}[THEOREM]{Observation}
\newenvironment{observation}{\begin{Observation}}
{\end{Observation}}
\newtheorem{Remark}[THEOREM]{Remark}
\newtheorem{Question}[THEOREM]{Question}
\newenvironment{question}{\begin{Question}}{\end{Question}}
\newtheorem{Main Fact}[THEOREM]{Main Fact}
\newenvironment{main Fact}{\begin{Main Fact}}{\end{Main Fact}}
\newtheorem{Fact}[THEOREM]{Fact}
\newtheorem{Subfact}[THEOREM]{Subfact}
\newtheorem{Claim}[THEOREM]{Claim}
\newenvironment{claim}{\begin{Claim}}{\end{Claim}}
\newtheorem{Main Claim}[THEOREM]{Main Claim}
\newenvironment{main claim}{\begin{Main Claim}}{\end{Main Claim}}
\newtheorem{Crucial Claim}[THEOREM]{Crucial Claim}
\newenvironment{crucial claim}{\begin{Crucial Claim}}{\end{Crucial Claim}}
\newtheorem{Subclaim}[THEOREM]{Subclaim}
\newtheorem{Sublemma}[THEOREM]{Sublemma}
\newtheorem{Corollary}[THEOREM]{Corollary}
\newenvironment{corollary}{\begin{Corollary}}{\end{Corollary}}
\newenvironment{cor}{\begin{Corollary}}{\end{Corollary}}
\newtheorem{Example}[THEOREM]{Example}
\newenvironment{example}{\begin{Example}}{\end{Example}}
\newtheorem{Problem}[THEOREM]{Problem}
\newtheorem{Proposition}[THEOREM]{Proposition}
\newtheorem{Conjecture}[THEOREM]{Conjecture}
\newenvironment{conjecture}{\begin{Conjecture}}{\end{Conjecture}}
\newtheorem{Discussion}[THEOREM]{Discussion}
\newenvironment{Proof of the Subfact}
{\noindent{\bf Proof of the Subfact.}}{\par\bigskip}
\newenvironment{Proof of the Theorem}
{\noindent{\bf Proof of the Theorem.}}{\par\bigskip}
\newenvironment{Proof of the Proposition}
{\noindent{\bf Proof of the Proposition.}}{\par\bigskip}
\newenvironment{Proof of the Conclusion}
{\noindent{\bf Proof of the Conclusion.}}{\par\bigskip}
\newenvironment{Proof of the Observation}
{\noindent{\bf Proof of the Observation.}}{\par\bigskip}
\newenvironment{Proof of the Fact}
{\noindent{\bf Proof of the Fact.}}{\par\bigskip}
\newenvironment{Proof of the Lemma}
{\noindent{\bf Proof of the Lemma.}}{\par\bigskip}
\newenvironment{Proof of the Claim}
{\noindent{\bf Proof of the Claim.}}{\par\bigskip}
\newenvironment{Proof of the Corollary}
{\noindent{\bf Proof of the Corollary.}}{\par\bigskip}
\newenvironment{Proof of the Subclaim}
{\noindent{\bf Proof of the Subclaim.}}{\par\medskip}
\newenvironment{Proof of the Main Claim}
{\noindent{\bf Proof of the Main Claim.}}{\par\bigskip}
\newenvironment{Proof of the Crucial Claim}
{\noindent{\bf Proof of the Crucial Claim.}}{\par\bigskip}
\newcommand{\pr}{{\rm pr}}
\newcommand{\elementary}{\prec}
\newcommand{\rest}{\upharpoonright}  
\newcommand{\supt}{\mathop{\rm supt}}
\newcommand{\deq}{\buildrel{\rm def}\over =}
\newcommand{\BB}{{\mathcal B}}
\newcommand{\CC}{{\mathcal C}}
\newcommand{\FF}{{\mathcal F}}
\newcommand{\F}{{\mathcal F}}
\newcommand{\GG}{{\mathcal G}}
\newcommand{\HH}{{\mathcal H}}
\newcommand{\I}{{\mathcal I}}
\newcommand{\LL}{{\mathcal L}}
\newcommand{\RR}{{\mathcal R}}
\newcommand{\UU}{{\mathcal U}}
\newcommand{\U}{{\mathcal U}}
\newcommand{\VV}{{\mathcal V}}
\newcommand{\W}{{\mathcal W}}
\newcommand{\WW}{{\mathcal W}}
\def\empty{\emptyset}
\def\mathunderaccent#1#2 {\let\theaccent#1\skewfactor#2
\mathpalette\putaccentunder}
\def\putaccentunder#1#2{\oalign{$#1#2$\crcr\hidewidth
\vbox to.2ex{\hbox{$#1\skew\skewfactor\theaccent{}$}\vss}\hidewidth}}
\newcommand{\dom}{\mbox{\rm dom}}
\begin{document}

\title{On middle box products and paracompact cardinals}

\author{{David Buhagiar}\footnote{Department of
Mathematics, Faculty of Science, University of Malta, Msida, MSD2080, Malta, \scriptsize{david.buhagiar@um.edu.mt} \url{https://www.um.edu.mt/profile/davidbuhagiar}}
\,and {Mirna D\v zamonja}\footnote{IRIF,
CNRS et Université de Paris Cité,
8 Place Aurélie Nemours
75205 Paris Cedex 13, France, \scriptsize{mdzamonja@irif.fr}, \url{https://www.logiqueconsult.eu}}}

\maketitle

\begin{abstract} The paper gives several sufficient conditions on the paracompactness of box products with an arbitrary number of factors and boxes of arbitrary size. The former include results on generalised metrisability and Sikorski spaces. Of particular interest are products of the type ${}{<\kappa} \square \,2^\lambda$, where we prove that for a regular uncountable cardinal $\kappa$, if ${}^{<\kappa} \square \, 2^\lambda$ is paracompact for every $\lambda\ge\kappa$, then $\kappa$ is at least inaccessible.
The case of the products of the type ${}^{<\kappa} \square\, X^\lambda$ for $\kappa$ singular has not been studied much in the literature and we offer various results.
The question if ${}^{<\kappa}\square\, 2^\lambda$ can be paracompact for all $\lambda$ when $\kappa$ is singular has been partially answered but
remains open in general.

\end{abstract}

\thanks{Mirna D\v zamonja's research was supported through the ERC H2020 project FINTOINF-MSCA-IF-2020-101023298. She thanks IHPST at the Université de Panthéon-Sorbonne, Paris, where she is an Associate Member, for their permanent support, and the University of Malta for their support as a Visitor in October 2020, as well as the University of East Anglia where she is a Visiting Professor.}

{\em Keywords}: box products, paracompact cardinals, singular cardinals

{\em MSC2020 Classification}: 03E10, 03E55, 54A10, 54D20


\section{Introduction}\label{sec:intro} 

\subsection{Motivation}
This paper concerns an old subject of the preservation of variants of compactness by variants of products. Ken Kunen gave a seminal contribution to this area through his paper \cite{MR514975},
where he proved 

\begin{theorem}[Kunen's Theorem]\label{Kunenth} [CH] A box product of countably many compact spaces is paracompact iff its Lindel\"of degree is $\le \mathfrak c$.
\end{theorem}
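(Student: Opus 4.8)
\noindent The plan is to prove the two implications separately, and to expect essentially all of the work --- and all of the use of CH --- to sit in the sufficiency direction. Write $X=\square_{n<\omega}X_n$ with each $X_n$ compact Hausdorff; since box products of regular spaces are regular, $X$ is regular, and I will freely use the classical fact that for a regular paracompact space the Lindel\"of degree equals the extent, $L(X)=e(X)$.

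For necessity (\emph{paracompact} $\Rightarrow L(X)\le\mathfrak c$) I would bound the extent. Given a closed discrete $D\subseteq X$, choose for each $d\in D$ a basic box $B_d=\prod_n U^d_n$ with $B_d\cap D=\{d\}$. If $d\neq d'$ then $B_d\neq B_{d'}$ (otherwise $d'\in B_{d'}=B_d$, so $d'\in B_d\cap D=\{d\}$), so $d\mapsto(U^d_n)_{n<\omega}$ is injective. When the factors are second countable one may take each $U^d_n$ from a fixed countable base, realising $D$ inside a set of size $\aleph_0^{\aleph_0}=\mathfrak c$; hence $e(X)\le\mathfrak c$ and $L(X)=e(X)\le\mathfrak c$. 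The general compact case I would obtain by the same idea, coding instead the finitely-supported coordinatewise data that separate the points of $D$. This direction is routine and uses only ZFC.

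The content is the sufficiency direction (\emph{$L(X)\le\mathfrak c$} $\Rightarrow$ paracompact), and here CH ($\mathfrak c=\aleph_1$) is essential. I would first refine an arbitrary open cover to a cover by basic boxes and, using $L(X)\le\aleph_1$, extract a subcover $\{B_\alpha:\alpha<\omega_1\}$, $B_\alpha=\prod_n U^\alpha_n$; the aim is a locally finite (equivalently, under regularity, $\sigma$-discrete) open refinement. The mechanism I would use is a fusion along the tree of finite approximations: because each $X_n$ is compact, every cover of a finite subproduct $\prod_{n<k}X_n$ has a finite subcover, so the behaviour of the cover over each finite initial segment is finitary, and it is only the infinitely many ``box'' coordinates that must be controlled. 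Using CH I would enumerate in a single recursion of length $\omega_1$ the $\aleph_1$ covering requirements (one per $B_\alpha$), building at stage $\alpha$ an open set $V_\alpha\subseteq B_{g(\alpha)}$ together with finite witnesses at each level $k$, shrinking to closed boxes via regularity, and with bookkeeping designed to force local finiteness of $\{V_\alpha\}$.

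The hard part --- and the real content of Kunen's theorem --- will be securing local finiteness against the failure of compactness of the box product itself: no single finite subproduct detects finiteness, so an uncontrolled recursion lets infinitely many $V_\alpha$ pile up at a point. The way through is to press down level by level using the compactness of the factors while exploiting the $\aleph_1$-filtration that CH provides, arranging that each point lies in a box whose earlier stages already decide which $V_\alpha$ can meet it, finitely at each level. In the zero-dimensional case the factors may be given clopen bases, the boxes become genuine partitions, and the refinement can be taken $\sigma$-discrete and clopen; the general compact case needs extra care, since the naive reduction to the zero-dimensional one fails (the fibres of a coordinatewise map are themselves non-compact box products), so the fusion must be run directly with the closed boxes supplied by regularity. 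Checking that all $\omega_1$ requirements can be met simultaneously --- which is possible \emph{precisely} because $\mathfrak c=\aleph_1$ --- is the step I expect to be the most delicate.
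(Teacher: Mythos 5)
There is a genuine gap: the sufficiency direction, which you yourself identify as carrying all the content, is never actually proved. What you give is a programme --- ``a fusion along the tree of finite approximations'', ``bookkeeping designed to force local finiteness'' --- whose decisive step (that the $\omega_1$-recursion can meet all requirements while keeping $\{V_\alpha\}$ locally finite) is explicitly deferred as ``the most delicate'' part. Naming the obstruction is not the same as overcoming it, and a direct transfinite construction of a locally finite refinement of a box product is exactly the kind of argument that tends to founder on the point you flag: no finite subproduct controls local finiteness, and nothing in your sketch explains how the recursion avoids infinitely many $V_\alpha$ accumulating at a point. The necessity direction also has a soft spot: your injective coding of a closed discrete set only lands inside a set of size $\mathfrak c$ when the factors are second countable, and for compact factors of large weight the phrase ``coding the finitely-supported coordinatewise data'' does not obviously produce a bound of $\mathfrak c$.

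The missing idea is the one Kunen's proof in \cite{MR514975} is built on, and which this paper generalises in \S\ref{sec:Sikorski} and the Appendix: pass to the quotient $\nabla_{n<\omega}X_n$ of $\square_{n<\omega}X_n$ by the relation ``differ in finitely many coordinates''. One then shows (i) the quotient map $q$ is closed because the factors are compact (the case $\kappa=\aleph_0$ of Lemma \ref{C:6.12d}); (ii) each fibre $q^{-1}([x])$ is a countable union of compact boxes, hence Lindel\"of; (iii) $\nabla_{n<\omega}X_n$ is a regular P-space (Corollary \ref{C:6.11d}(a)) whose Lindel\"of degree is at most that of $\square_{n<\omega}X_n$, so under CH it is $(<\aleph_2)$-compact and therefore paracompact by Lemma \ref{notchtheorem}; and (iv) paracompactness pulls back along a closed map with Lindel\"of fibres by the Michael-type Lemma \ref{L:6.8d}. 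This modular route is precisely what lets one sidestep the local-finiteness fusion you are trying to run by hand --- the only place CH enters is step (iii), where $L\le\mathfrak c=\aleph_1$ turns the P-space $\nabla$ into something paracompact for free. I would redo the sufficiency direction along these lines (the paper assembles the compact-metric instance as Theorem \ref{big}(b) with $\kappa=\aleph_0$), and for necessity, bound $L(\square_{n<\omega}X_n)$ via the same decomposition into $\le|\nabla_{n<\omega}X_n|$ many Lindel\"of fibres rather than via a raw cardinality count of a closed discrete set.
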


Kunen's Theorem is one of the highlights not only of Ken's work but also of that of the whole Madison school of
set-theoretic topology. This group included permanent members Ken, Arnie Miller and Mary Ellen Rudin, members of neighbouring univerities such as Paul Bankston in Milwaukee, distinguished visitors such as Adam Ostazewski, Judy Roitman and Eric Van Douwen, and students such as Amer Be{\v s}lagi\'c , Zorana Lazarevi\'c and Frank Tall. The celebrated Dowker problem is an instance of research in this area. 

Let us denote by ${}^{<\kappa} \square X^\lambda$ the product of $\lambda$ many copies of a space $X$ given the topology generated by sets of the form
$\prod_{\alpha<\lambda} O_\alpha$ where each $O_\alpha$ is open in $X$ and only $<\kappa$ many $O_\alpha$ are not equal to the whole $X$.
Such a topology is called a $(<\kappa)$-box topology.
It is well known that the productivity of $(<\kappa)$-compactness in a $(<\kappa)$-box product of topological spaces leads to large cardinal notions:  a
cardinal $\kappa$ is strongly compact iff ${}^{<\kappa}\square 2^\lambda$ is $(<\kappa)$-compact for all $\lambda\ge\kappa$ and 
$\kappa$ is weakly compact iff ${}^{<\kappa}\square 2^\kappa$ is $(<\kappa)$-compact (see Theorem 2.10 and Theorem 5.1 in \cite{BuhagiarDzamonja1} for the proofs in the case of $\kappa=\kappa^{<\kappa}$ and Theorem \ref{singularnotcompact} here for the case when $\kappa$ is singular). One of the main questions we address in this paper is to what extent paracompactness of such products leads to a large cardinal notion.

In this paper, all spaces are Hausdorff. 

\subsection{Basic concepts and history}
For two covers $\OO$ and $\VV$ of a space $X$, we say that $\VV$ is a
{\em refinement} of $\OO$ if for every $V\in \VV$ there exists $O\in\OO$ such that $V\subseteq O$. The familiar definition of $X$ being compact, which says that every open cover has a finite subcover, can be stated in an equivalent way by saying that every open cover has a finite open refinement. Then it is clear that the following definition generalises 
compactness.

\begin{definition}\label{def:paracompact} A space $X$ is {\em paracompact} if every open cover of $X$ has a locally finite open refinement.
\end{definition}

By a {\em localy finite} family $\FF$ of sets in a topological space $X$ we mean a family such that for every $x\in X$ there exists an open set $O$ with $x\in O$ and such that $O$ intersects only finitely many sets in $\FF$.  Paracompactness was introduced by Jean Dieudonné in \cite{MR13297} who proved that every paracompact space is normal and that every 2nd countable metric space is paracompact. Moreover, he proved that paracompactness is of interest in analysis, since if $X$ is paracompact then for every pair of functions
$g,h:\,X\to \mathbb R$ such that $g$ is lower semi-continuous and $h$ is upper semi-continuous with $g(x)<h(x)$ for all $x$, there exists a continuous function $f:\,X\to \mathbb R$ satisfying  $g(x)<f(x)<h(x)$ for all $x\in X$. Dieudonné asked if every metrisable space was paracompact, and Ernest Michael answered in the affirmative in \cite{MR56905}. These results give an indication that paracompactness is close to metrisability, and indeed Smirnov's metrisation theorem states that a space $X$ is metrisable iff it is locally metrisable and paracompact (this theorem is a consequence of the Nagata-Smirnov metrisation theorem proved independently by Jun-iti Nagata in \cite{MR43448} and Yuriĭ Mikhaĭlovich Smirnov in \cite{MR0041420}).

Arthur H. Stone proved in \cite{MR26802} that  paracompactness  is equivalent to a strengthening of normality called ``full normality'',
introduced by John W. Tukey in \cite{MR0002515}. (Stone's result in fact already implies that metrisable spaces are paracompact). Just like normality, paracompactness is not necessarily preserved by products, as was already proved by
Dieudonné. The celebrated Dowker problem (resolved by M.E. Rudin in \cite{MR270328}, \cite{RudinDowker}, who found a ZFC example of a Dowker space)  connects these notions: a normal space $X$ is a Dowker space iff $X\times [0,1]$ is not normal iff $X$ is not countably paracompact. (These equivalences were obtained by Clifford H. Dowker in \cite{MR43446}). By {\em countably paracompact} we mean a space which satisfies the definition of
paracompactness when only countable covers are considered.

We hence have that it is interesting to consider even finite products of paracompact spaces. For infinite products, the Tikhonov product was introduced exactly in order to obtain that the product of compact spaces is compact, since the
more natural box topology on even the countably infinite product of the form $\prod_{n<\omega} X_n$ with the base
consisting of arbitrary products of open sets, fails to be compact even if every $X_n$ is compact. For example, the box product 
$\square\, 2^\omega$ is a discrete space of size $\mathfrak c$, hence not compact. However, such a product might be paracompact, as is the case of
$\square\, 2^\omega$. Kunen's result \cite{MR514975}, cited above, gives a characterisation under $CH$. Box products of the form $\square_{n<\omega} X_n$ became known as `countable box products' and were extensively studied by the Madison school. More on this concept can be found in the survey article \cite{MR3414877} by J. Roitman and Scott Williams.

\subsection{This paper}
In this paper we are interested in more general box products of the type $\prod_{\alpha<\kappa} X_\alpha$, but also in the `middle box' products, where the number of spaces in the product is larger than the defining cardinal of the boxes. For example, we may consider products of the
form $\prod_{\alpha<\omega_1} X_\alpha$ but with the base 
consisting of products $\prod_{\alpha<\omega_1} O_\alpha$ of open sets where a countable number of $O_\alpha$ is allowed to be different than $X_\alpha$. We are also interested in generalisatons of metrisability, since the experience with Smirnov's theorem has shown that metrisability is strongly connected with paracompactness. 

The main results of the paper are to give several sufficient conditions on the paracompactness of box products and some necessary ones. The former include results on generalised metrisability and Sikorski spaces. Of particular interest are products of the type ${}^{<\kappa} \square\,2^\lambda$, where we prove that for a regular uncountable cardinal $\kappa$, if ${}^{<\kappa}\square\, 2^\lambda$ is paracompact for every $\lambda\ge\kappa$, then $\kappa$ is at least inaccessible.
The case of the products of the type ${}^{<\kappa} \square\, X^\lambda$ for $\kappa$ singular has not been studied much in the literature and we offer various results.
The question if ${}^{<\kappa}\square \, 2^\lambda$ can be paracompact for all $\lambda$ when $\kappa$ is singular has been partially answered but
remains open in general.

The paper is organised as follows. In \S\ref{sec:intro-not} we give basic definitions and notation. In \S\ref{sec:omega+1} we recall the notion of
$\kappa$-metrisability, which implies paracompactness and we give sufficient conditions for it, some applications and some limitations. In \S\ref{sec:Sikorski} we recall a class of topological spaces studied by Sikorski, which we name Sikorski spaces and we prove a main Theorem \ref{big} which gives sufficient conditions for box products of Sikorski spaces to be paracompact. The short \S\ref{sec:scaling} gives a generalisation of a theorem of van Douwen, which shows that the existence of scales suffices for certain box products of Sikorski spaces to be paracompact. In \S\ref{sec:aleph-omega} we get to one of the main interests of the paper, the singular cardinals. The main theorem that we obtain here is Theorem \ref{th:gen-aleph_omega} which gives sufficient conditions for a ``tall-thin'' product of ordinal spaces to be
paracompact. \S \ref{sec:large_car} is a main section, where we define the notion of a paracompact cardinal and prove that for an uncountable regular cardinal being paracompact is a large cardinal notion. It remains unclear if this is also true in the case of singular cardinals. In \S\ref{sec:combo} we give a combinatorial characterisation of a cardinal being paracompact and we indicate some further research directions, relating to combinatorics and logic. Finally, \S\ref{ap:nabla} is an Appendix reviewing the Nabla product and those of its properties that we used in the previous sections of the paper.

\section{Notation and definitions}\label{sec:intro-not} We use $\kappa,\lambda$ and $\theta$ to denote infinite cardinals.
For a topological space $X$, we denote by $\tau(X)$ the topology of $X$, that is the family of all open subsets of
$X$.

A {\em neighbourhood base} of a point $x$ in a topological space $X$ is a family $\BB_x$ of open sets containing $x$ such that every
$O\in\tau(X)$, if $x\in O$ then there is $B\in \BB_x$ with $B\subseteq O$.

\begin{defn} Suppose that $\kappa\le\lambda^+$ are cardinals and $\langle X_\alpha: \,\alpha
<\lambda\rangle$ is a sequence of topological spaces. The $(<\kappa)${\em -box topology} on the set-theoretic
product $\prod_{\alpha<\lambda}X_\alpha$ has as a base, sets of
the form $\prod_{\alpha<\lambda}O_\alpha$, where each $O_\alpha$ is
open in $X_\alpha$ and $|\{\alpha <\lambda:\, O_\alpha\neq
X_\alpha\}|< \kappa$. This is denoted by ${}^{<\kappa}\,\bigbox_{\alpha<\lambda} X_\alpha$.

If all $X_\alpha$ are the same space $X$, we may write  ${}^{<\kappa}\,\bigbox X^\lambda$ in place of
${}^{<\kappa}\,\bigbox_{\alpha<\lambda} X_\alpha$.
\end{defn}

So, in the case of $\kappa=\aleph_0$, the $(<\kappa)$-box topology is
the usual Tikhonov topology, denoted by $\prod_{\alpha<\lambda}X_\alpha$. 
When $\kappa = \lambda^+$ we get the \emph{full $\lambda$-box product} denoted by 
$\bigbox_{\alpha<\lambda} X_\alpha$. When $\lambda=\aleph_0$, the full $\lambda$-box product is called, following 
\cite{MR3414877}, the {\em countable product}.

\begin{definition}\label{def:support} Suppose that $U=\prod_{\alpha<\lambda} U_\alpha$ is a basic open set in ${}^{<\kappa}\,\bigbox X^\lambda$.
We define the {\em support of} $U$ by $\supt(U)=\{\alpha<\lambda:\,U_\alpha\neq X_\alpha\}$. In the case that each $X$ is the discrete two point space
$2=\{0,1\}$, we identify $2^\lambda$ with the space ${}^\lambda 2$ of functions from $\lambda$ to $2$. Then the basic clopen sets in
 ${}^{<\kappa}\,\bigbox 2^\lambda$ are of the form $[s]=\{f\in 2^\lambda:\,s\subseteq f\}$ for $s$ a partial function from $\lambda\to 2$
 with $|\dom(s)|<\kappa$. In the terminology of Kunen's book [Def 6.1. VII \cite{kunen}] the family of all such $[s]$ is denoted 
 ${\rm Fn}(\lambda, 2,\kappa)$.
\end{definition}

We note that if ${}^{<\kappa}\,\bigbox X^\lambda$ is paracompact, then for every $\lambda'<\lambda$, the product ${}^{<\kappa}\,\bigbox X^{\lambda'}$, being
homeomorphic to a closed subspace of ${}^{<\kappa}\,\bigbox X^\lambda$, is also paracompact.
In many places in the paper we shall deal with a notion somewhat stronger than paracompactness, which we recall now.

\begin{definition}\label{ultraparacompact} A space $X$ is said to be {\em ultraparacompact} if every open cover of $X$ has a  disjoint clopen refinement.
\end{definition}

Clearly, a disjoint clopen cover is locally finite, so ultraparacompactness implies paracompactness. 

We mention the following classical fact which shows that paracompactness is preserved in products with compact spaces. 

\begin{lemma}\label{lem:compxpara} A product of a compact space $Y $ with a paracompact space $Z$ is paracompact. If $Y$ is compact 0-dimensional and
$Z$ is ultraparacompact, then $Y\times Z$ is ultraparacompact.
\end{lemma}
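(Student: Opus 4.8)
The plan is to prove both assertions with a single slicing argument resting on the tube lemma. For the first part, let $\mathcal{U}$ be an open cover of $Y\times Z$. Fixing $z\in Z$, the slice $Y\times\{z\}$ is homeomorphic to the compact space $Y$, so finitely many members $U_1^z,\dots,U_{n_z}^z$ of $\mathcal{U}$ cover it, and the tube lemma then supplies an open $W_z\ni z$ in $Z$ with $Y\times W_z\subseteq\bigcup_{i\le n_z}U_i^z$. The family $\{W_z:z\in Z\}$ covers $Z$, so by paracompactness I would pass to a locally finite open refinement $\mathcal{V}$ and, for each $V\in\mathcal{V}$, fix a witness $z(V)$ with $V\subseteq W_{z(V)}$.

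The candidate refinement is $\{U_i^{z(V)}\cap(Y\times V):V\in\mathcal{V},\ i\le n_{z(V)}\}$. Each such set lies inside a member of $\mathcal{U}$, so the family refines $\mathcal{U}$, and it covers $Y\times Z$ because $\mathcal{V}$ covers $Z$ while the chosen $U_i^{z(V)}$ cover the tube $Y\times W_{z(V)}\supseteq Y\times V$. For local finiteness I would take a point $(y,z)$, use local finiteness of $\mathcal{V}$ to obtain an open $O\ni z$ meeting only finitely many $V\in\mathcal{V}$, and note that $Y\times O$ can meet $U_i^{z(V)}\cap(Y\times V)$ only when $O\cap V\neq\emptyset$; since each such $V$ contributes only the finitely many indices $i\le n_{z(V)}$, the neighbourhood $Y\times O$ meets only finitely many members of the refinement.

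For the second part I would run the same scheme but upgrade every ingredient to the clopen setting. For each $z$, 0-dimensionality of $Y$ lets me cover the compact slice $Y\times\{z\}$ by finitely many basic boxes $C\times O$ with $C$ clopen in $Y$ and $C\times O$ contained in a member of $\mathcal{U}$; disjointifying the finitely many sets $C$ by Boolean operations (clopen sets being closed under these) produces a finite clopen partition $\{C_1^z,\dots,C_{m_z}^z\}$ of $Y$ together with an open $W_z\ni z$ such that each $C_j^z\times W_z$ lies in some member of $\mathcal{U}$. Ultraparacompactness of $Z$ then supplies a disjoint clopen refinement $\mathcal{D}$ of $\{W_z:z\in Z\}$; choosing $z(D)$ with $D\subseteq W_{z(D)}$, the family $\{C_j^{z(D)}\times D:D\in\mathcal{D},\ j\le m_{z(D)}\}$ is the required cover. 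Its members are clopen as products of clopen sets, they refine $\mathcal{U}$, and they cover $Y\times Z$; disjointness holds because distinct $D$ are disjoint in $Z$ while, within a fixed $D$, the sets $C_j^{z(D)}$ partition $Y$.

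The main obstacle in both parts is the passage from the pointwise-in-$z$ data to a single global family: the tube lemma is exactly what makes the finite slice-cover uniform on a whole neighbourhood $W_z$, and the local finiteness (respectively disjointness) of the refinement of $\{W_z:z\in Z\}$ is precisely what transfers local finiteness (respectively the clopen-partition property) from $Z$ up to the product. Everything else is routine bookkeeping.
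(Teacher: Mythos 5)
Your proof is correct and follows essentially the same route as the paper's: the tube-lemma slicing over points of $Z$, followed by a locally finite (respectively disjoint clopen) refinement of the induced cover $\{W_z\}$ of $Z$. The only difference is cosmetic — you disjointify the clopen pieces on the $Y$-side before descending to $Z$, whereas the paper disjointifies each finite clopen family $\CC_\alpha$ at the very end — and both variants work.
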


\begin{proof} Suppose that $Y$ is compact, $Z$ is paracompact and $\mathcal A$
is an open cover of $Y\times Z$. We shall argue as in the proof of the `Tube Lemma' of Tikhonov, as follows.

For any given $z\in Z$ and for each $y\in Y$ find $A\in \mathcal A$ and a basic open set $U^y\times V^y\subseteq A$ such that $(y,z)\in U^y\times V^y$.
Since $Y$ is compact, so is $Y\times \{z\}$, so we can find finitely many $\{y^z_0,\ldots y^z_{n_z-1}\}$ such that $Y\times \{z\}\subseteq \bigcup_{i<n_z}
U^{y_i}\times V^{y_i}$. Let $V_z=\bigcap_{i<n_z} V^{y_i}$, so $Y\times \{z\}\subseteq \bigcup_{i<n_z} U^{y_i}\times V_z$. For each
$i<n_z$ let $A^z_i\in \mathcal A$ be such that $U^{y_i}\times V^{y_i}\subseteq A^z_i$. Let ${\mathcal A}_z=\{A^z_i:\, i<n_z\}$.

The family $\{V_z:\;z\in Z\}$ is an open cover of $Z$, so by the 
paracompactness of $Z$ there is a locally finite open refinement $\BB=\{B_\alpha:\,\alpha<\alpha^\ast\}$ of
$\{V_z:\;z\in Z\}$. For each $\alpha<\alpha^\ast$, choose $z_\alpha\in Z$ such that $B_\alpha\subseteq V_{z_\alpha}$. For each $\alpha$ let
\[
\CC_\alpha=\{ A\cap (Y\times B_\alpha):\, A\in {\mathcal A}_{z_\alpha}\}
\]
and let $\CC=\bigcup_{\alpha<\alpha^\ast} \CC_\alpha$. Clearly, $\CC$ consists of open sets each of which is a subset of an element of $\mathcal A$.
We claim that $\CC$ covers $Y\times Z$. Indeed, if $(y,z)\in Y\times Z$, then first of all there is $\alpha$ such that $z\in B_\alpha$, but then 
$\CC_\alpha$ covers $Y\times B_\alpha$ and hence $(y,z)$ is in some element of $\CC_\alpha$. Hence $\CC$ is an open refinement of
$\mathcal A$ that is a cover.

In the case of paracompactness, we can finish by observing that $\CC$ is locally finite.  

In the case of the ultraparacompactness of $Z$ and 0-dimensionality of $Y$, by passing to a subcover if necessary, we may assume that $\mathcal A$ consists of clopen sets and that $\BB$ consists of disjoint clopen sets. Then each $\CC_\alpha$ is a finite family of clopen sets and
for $\alpha\neq\beta$ the sets in $\CC_\alpha$ are disjoint from the sets in $\CC_\beta$ . We are not
done yet, since it may happen that the sets within the same $\CC_\alpha$ are not pairwise disjoint.

However, each $\CC_\alpha$ consists of a finite family of clopen sets, so we can clearly refine $\CC_\alpha$ to another finite family of pairwise disjoint clopen sets, which has the same union.
$\eop_{\ref{lem:compxpara}}$
\end{proof}

We shall be using the following concepts from infinite combinatorics:

\begin{definition}\label{def:kappaopencard} (1) The cardinal $\mathfrak{b}(\kappa)$ is the smallest size of an unbounded family in the structure $({}^{\kappa}\kappa, \le^\ast_\kappa)$ of the functions from $\kappa$ to $\kappa$ ordered by 
\[
f\le^\ast_\kappa g\iff |\{\alpha <\kappa:\,f(\alpha)>g(\alpha)\}|<\kappa.
\]
We write $f<^\ast_\kappa g$ if $f\le^\ast_\kappa g$ and not $g\le^\ast_\kappa f$.

\smallskip

{\noindent (2)} A {\em scale} in $({}^{\kappa}\kappa, \le^\ast_\kappa)$ is a sequence of the form $\langle f_\alpha:\,\alpha<\lambda\rangle$ such that
\[
\alpha<\beta\implies f_\alpha<^\ast_\kappa f_\beta
\]
and for every $g\in {}^{\kappa}\kappa$ there is $\alpha<\lambda$ such that $g\le^\ast_\kappa f_\alpha$.
\end{definition}

\section{Generalised metrisability and a short review of $\square (\omega+1)^\lambda$ and $\square (\omega_1+1)^{\lambda}$ }\label{sec:omega+1} 
Although our focus is not on countable box products nor those with $\omega_1$ many factors, we start this section with some of the main
results obtained about such products, which are the case most studied in the literature. These results will serve as a motivation and also as a way to introduce certain techniques which will be used later in a more general setting. The main theorem in this section is Theorem \ref{P:G2}, of which part (i) was stated by Van Douwen and part (ii) is due to us.

In spite of many partial results, one of the most basic questions about box product remains open:

\begin{question}\label{open} Is the product $\square (\omega+1)^\omega$ normal?
\end{question}

L. Brian Lawrence proved in \cite{MR1303123} that $\square (\omega+1)^{\omega_1}$ is not normal. A positive answer to Question \ref{open} is known under various set-theoretic assumptions, see Roitman-Williams \cite{MR3414877}.
Miller \cite{MR675593}
considered the middle box product and proved the following theorem under CH.

\begin{thm}[A. Miller]\label{T:M82} $[\rm CH]$ If for each $\alpha < \omega_1$, $X_\alpha$ is a compact 
Hausdorff space of weight $\leq \mathfrak{c}$, then ${}^{<\omega_1}\!\bigbox_{\alpha<\omega_1} 
X_\alpha$ is paracompact.
\end{thm}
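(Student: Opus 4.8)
The plan is to exploit that, because $\omega_1$ is regular and uncountable, every countable subset of $\omega_1$ is bounded, so every basic open set of $Y:={}^{<\omega_1}\bigbox_{\alpha<\omega_1}X_\alpha$ has $\supt$ contained in some $\xi<\omega_1$. First I would replace the given open cover $\mathcal A$ by a refining cover of basic open sets; a locally finite refinement of the latter is automatically one of the former. Each such basic set is then a \emph{cylinder} $\pi_\xi^{-1}(W)$, where $\pi_\xi:Y\to Y_{<\xi}:=\bigbox_{\alpha<\xi}X_\alpha$ is the restriction map onto the initial product (carrying the full, i.e.\ countable, box topology, since $\xi$ is countable) and $W$ is basic open in $Y_{<\xi}$. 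Thus $Y$ is naturally the inverse limit $\varprojlim_{\xi<\omega_1}Y_{<\xi}$ along the open restriction maps $\pi^\eta_\xi$, and every neighbourhood filter is generated by cylinders over bounded levels. It is also convenient to first reduce to a uniform factor: since each compact $X_\alpha$ of weight $\le\mathfrak{c}$ embeds as a closed subspace of $[0,1]^{\omega_1}$, and a product of closed sets is closed in the box topology, $Y$ is homeomorphic to a closed subspace of ${}^{<\omega_1}\bigbox_{\alpha<\omega_1}[0,1]^{\omega_1}$; as closed subspaces of paracompact spaces are paracompact, one may assume all factors equal the cube $[0,1]^{\omega_1}$.

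The second ingredient is that every level is paracompact by Kunen's Theorem~\ref{Kunenth}. Indeed $Y_{<\xi}$ is a box product of countably many compact spaces, and under CH its weight is at most $\prod_{\alpha<\xi}w(X_\alpha)\le\omega_1^{\aleph_0}=\mathfrak{c}$, so its Lindel\"of degree is $\le\mathfrak{c}$ and Theorem~\ref{Kunenth} applies. Hence each $Y_{<\xi}$ is paracompact, and the same weight bound shows $Y$ itself has weight $\le\mathfrak{c}$, so under CH one may fix an enumeration $\langle A_\zeta:\zeta<\omega_1\rangle$ of a base of cylinders.

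The engine of the proof is a transfinite recursion along $\omega_1$ that refines the cover one level at a time. Writing $D_\xi=\bigcup\{A\in\mathcal A:\supt(A)\subseteq\xi\}=\pi_\xi^{-1}(U_\xi)$ for an increasing sequence of open sets $U_\xi\subseteq Y_{<\xi}$ with $\bigcup_{\xi<\omega_1}D_\xi=Y$, at stage $\xi$ I would use the paracompactness of $Y_{<\xi}$ to produce a locally finite open refinement of the trace of the cover on the \emph{newly covered} region $U_\xi\setminus\overline{U_\xi^{-}}$ (where $U_\xi^{-}$ records the earlier levels), and pull it back to a family of cylinders over level $\xi$. Compactness of the factors enters here exactly as in the Tube Lemma argument of Lemma~\ref{lem:compxpara}: finitely many basic sets suffice over each relevant compact slice, which keeps the contribution of each level finite over any bounded block of coordinates. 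The union over all $\xi<\omega_1$ of these pulled-back families is an open refinement of $\mathcal A$ that covers $Y$.

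The hard part is to guarantee that this union is \emph{globally} locally finite, and this is where CH and compactness are indispensable. The cautionary example is $\omega_1$ with the order topology, which is the increasing union of its paracompact initial segments yet is not paracompact; the obstruction is a stationary set pressing down, and nothing in the bare inverse system $\langle Y_{<\xi},\pi^\eta_\xi\rangle$ excludes it, since the bonding maps are open but neither closed nor of compact fibre. What rescues $Y$ is the \emph{free tail}: a basic neighbourhood of a point is a cylinder $\pi_\eta^{-1}(B_0)$ determined by a bounded, countable amount of information, leaving every coordinate in $[\eta,\omega_1)$ unconstrained. Using the enumeration furnished by CH, I would run the recursion with a bookkeeping device (equivalently, a pressing-down argument along a club of countable elementary submodels of some $H(\theta)$) ensuring that the piece manufactured at stage $\zeta$ is supported beyond a level predicted by $\{A_{\zeta'}:\zeta'<\zeta\}$, so that any fixed cylinder $\pi_\eta^{-1}(B_0)$ meets only those finitely many pieces decided below a countable ordinal $\delta>\eta$ in the club. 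Verifying that this bookkeeping simultaneously preserves the covering and refinement properties while forcing local finiteness is the crux of the argument, and it is exactly the place where the regularity of $\omega_1$ (through boundedness of supports) and CH (through the counting of the base and the club recursion) are used essentially.
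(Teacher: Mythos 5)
Your proposal has a genuine gap at exactly the point you flag as ``the crux'': global local finiteness of the transfinite union $\bigcup_{\xi<\omega_1}\mathcal W_\xi$ is never established, and the obstruction is not merely technical. The telescoping trick (a point $x$ whose minimal level is $\xi_0$ has the neighbourhood $D_{\xi_0}$ avoiding all pieces from levels $>\xi_0$) still leaves countably many levels $\le\xi_0$, each contributing a locally finite family, and a countable union of locally finite families need not be locally finite. The tool that rescues such long decompositions in this paper, Lemma \ref{T:GMK}, requires the ambient space to be $(<\kappa)$-open, and $Y={}^{<\omega_1}\bigbox_{\alpha<\omega_1}X_\alpha$ is \emph{not} $(<\omega_1)$-open when the factors are infinite compact spaces (a countable intersection of basic boxes can pinch a coordinate to a non-open set). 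So an $\omega_1$-indexed union of locally finite open families does not by itself yield paracompactness of $Y$, and the ``bookkeeping / pressing-down along a club of elementary submodels'' that is supposed to force local finiteness is only gestured at, not constructed; as your own $\omega_1$-with-the-order-topology example shows, some such argument is indispensable and cannot be waved through. The preliminary reductions (basic refinements, cylinders over bounded levels, embedding into $[0,1]^{\omega_1}$, Kunen's Theorem at each level $Y_{<\xi}$) are all correct but do not touch this difficulty.

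For comparison, the route this paper takes (following Kunen and Miller, and carried out in general as Theorem \ref{big}(c)) avoids the level-by-level recursion entirely. One passes to the quotient ${}^{<\omega_1}\nabla_{\alpha<\omega_1}X_\alpha$ by the relation of differing on a countable set. That quotient, unlike $Y$ itself, \emph{is} $(<\omega_1)$-open (Lemma \ref{lm:openess}), and under CH it has weight $\le 2^{\aleph_0}=\aleph_1$, hence is $(<\aleph_2)$-compact and therefore paracompact by Lemma \ref{notchtheorem}. One then shows the quotient map is closed (Lemma \ref{L:GML4}) with Lindel\"of fibres (Lemma \ref{L:GML5}) and pulls paracompactness back along it by the Michael-type Lemma \ref{L:6.8d}. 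In effect, the $\nabla$-construction manufactures the $P$-space on which the ``long union of locally finite families'' argument is legitimate, which is precisely the ingredient your direct inverse-limit approach lacks. If you want to salvage your outline, you would need either to supply the club/elementary-submodel argument in full detail or to route the local-finiteness claim through such a quotient.
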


In particular, under $CH$, the product ${}^{<\omega_1}\square (\omega+1)^{\omega_1}$ is paracompact and hence normal.
The next natural case to consider is the space obtained by replacing $\omega+1$ with the space $\omega_1+1$ obtained by 
having the discrete topology on $\omega_1$ and letting open neighbourhoods of the point $\omega_1$ be sets 
of the form $O\cup \{\omega_1\}$ where $O$ is a co-countable subset of $\omega_1$. This space is not compact, so considering its box products presents new challenges. The first theorem we mention is
due to M.E. Rudin in \cite{MR0407794}. 

\begin{theorem}[M.E. Rudin]\label{MERudinth}$\square (\omega_1+1)^{\omega}$ is ultraparacompact.
\end{theorem}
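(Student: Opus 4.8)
The plan is to prove the theorem as stated, namely that $Z:=\square\,(\omega_1+1)^\omega$ is ultraparacompact (Definition \ref{ultraparacompact}), by producing, for an arbitrary open cover, a partition of $Z$ into clopen sets refining it. Write $Y=\omega_1+1$ with the described topology, so that the points of $\omega_1$ are isolated and the top point $\omega_1$ has the co-countable clopen neighbourhoods $\{\omega_1\}\cup O$. Two structural facts drive everything. First, $Y$ is zero-dimensional and hence so is $Z$ (basic boxes are products of clopen sets), so it suffices to refine a cover by basic clopen boxes, and a disjoint clopen refinement of such a cover also refines the original. Second, and crucially, $Y$ is a \emph{$P$-space}: a countable intersection $\bigcap_i(\{\omega_1\}\cup O_i)=\{\omega_1\}\cup\bigcap_i O_i$ is again a co-countable neighbourhood of the top, so countable intersections of open sets are open; consequently $Z$ is a $P$-space as well. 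In particular, for any countable $D\subseteq\omega$ and any $s\colon D\to\omega_1$, the cylinder $[s]:=\{x\in Z:\ x{\restriction}D=s\}$ is clopen and canonically homeomorphic to $\square\,(\omega_1+1)^{\omega\setminus D}$; this is what lets the construction freeze infinitely many coordinates and still remain inside the clopen algebra.

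First I would build the refinement by a recursion on \emph{conditions}, where a condition is a countable partial function $s\colon \dom(s)\to\omega_1$ recording the coordinates that have been frozen to non-top values, the root being the empty condition. At a condition $s$, let $R=\omega\setminus\dom(s)$ and let $p_s\in[s]$ be the extension of $s$ equal to $\omega_1$ on every coordinate of $R$. Choose a basic clopen $A_s=\prod_nU^s_n$ in the cover with $p_s\in A_s$; since $\omega_1\in U^s_n$ for $n\in R$, each such $U^s_n=\{\omega_1\}\cup O^s_n$ is co-countable, with countable complement $C^s_n=\omega_1\setminus O^s_n$. Set $B_s=A_s\cap[s]$: this is a clopen subset of $[s]$ containing $p_s$, contained in $A_s$, and it is the piece I \emph{capture} at $s$. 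Its complement inside $[s]$ is $\{x\in[s]:\ \exists n\in R,\ x_n\in C^s_n\}$; enumerating the countably many pairs $(n,\gamma)$ with $n\in R$, $\gamma\in C^s_n$ as $\langle(n_k,\gamma_k):k<\omega\rangle$, I split this remainder by routing $x$ to the child condition $s\cup\{(n_k,\gamma_k)\}$ for the least $k$ with $x_{n_k}=\gamma_k$. Successor steps thus freeze one more coordinate to a non-top value; at limit stages I take the union of the conditions along the branch, and the $P$-space property guarantees the resulting cylinder is still clopen, so the recursion continues.

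Next I would verify the three required properties of the family $\{B_s\}$ ranging over all conditions produced. Each $B_s$ is clopen and $B_s\subseteq A_s$, so the family refines the cover. Disjointness is built in: if one condition extends another, the descendant's cylinder lies in the captured-complement of the ancestor, while incomparable conditions are separated at their branching node by the least-$k$ rule, so distinct $B_s$ are disjoint. The heart of the argument is that the family \emph{covers} $Z$, i.e.\ every $x$ is captured. Along the branch followed by $x$, each successor step freezes a new coordinate on which $x$ takes a non-top value, so $x$ can trigger at most as many successor steps as it has non-top coordinates; since there are only countably many coordinates, its branch is countable. Moreover, once a condition $s$ on the branch has frozen every non-top coordinate of $x$, the free coordinates $R$ all carry the value $\omega_1$, whence $x=p_s\in B_s$; so $x$ is captured no later than that stage. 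In the extreme cases this recovers the expected behaviour: the all-top point is captured at the root, and a point of the dense set $\omega_1^{\,\omega}$ of isolated points, if never caught earlier, is captured as the singleton $[s]=\{x\}$ once all its coordinates are frozen. Hence $\{B_s\}$ is a disjoint clopen cover refining the given cover, and $Z$ is ultraparacompact, in particular paracompact.

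The main obstacle is the interaction between termination and global disjointness across what is genuinely a transfinite tree of conditions, and I would foreground the two features of the space that make it go through. The co-countability of each forbidden set $C^s_n$ is what keeps every branching countable and lets me disjointify the remainder one coordinate at a time through a single $\omega$-enumeration; without it the remainder could not be split into a disjoint clopen family indexed by one-coordinate extensions. The $P$-space property is the linchpin at limit stages: freezing infinitely many coordinates yields a countable intersection of clopen cylinders, which stays clopen only because $Z$ is a $P$-space, and this is precisely what allows a branch to pass through limit ordinals and still capture points whose frozen coordinate set is infinite. I expect the bookkeeping needed to present the recursion cleanly — fixing the choice $s\mapsto A_s$, the enumeration at each node, and checking that the captured pieces at \emph{all} levels are pairwise disjoint — to be the most delicate part of the write-up, even though each individual verification is routine.
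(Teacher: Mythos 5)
Your overall strategy is sound, but there is one step that fails as literally written: the disjointness of the captured family $\{B_s\}$ with $B_s=A_s\cap[s]$. The least-$k$ routing rule partitions the \emph{remainder} $[s]\setminus A_s$ into clopen pieces $E_k=\{x\in[s]:x_{n_k}=\gamma_k\ \wedge\ \forall j<k\ x_{n_j}\neq\gamma_j\}$, but the set you then capture at the child $t_k=s\cup\{(n_k,\gamma_k)\}$ is $A_{t_k}\cap[t_k]$, which is generally much larger than $A_{t_k}\cap E_k$. Two sibling cylinders $[t_k]$ and $[t_{k'}]$ ($k<k'$) are not disjoint: a point with $x_{n_k}=\gamma_k$ and $x_{n_{k'}}=\gamma_{k'}$ lies in both, and since $A_{t_k}$ and $A_{t_{k'}}$ restrict the coordinates $n_{k'}$ resp.\ $n_k$ only to co-countable neighbourhoods of the top point, such a point can easily lie in $A_{t_k}\cap[t_k]\cap A_{t_{k'}}\cap[t_{k'}]$. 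So distinct $B_s$ need not be disjoint. The repair is exactly the bookkeeping you flag at the end: the recursion must be on nodes $(s,E)$ of a tree, where $E\subseteq[s]$ is the clopen set of points actually routed to that node ($E=Z$ at the root, $E=E_k$ at a successor child, $E=\bigcap$ of the branch at a limit, clopen by the $P$-property), and the captured piece must be $A_s\cap E$, not $A_s\cap[s]$. With that change your verifications all go through: refinement and clopenness are immediate, disjointness holds because descendants live inside $E\setminus A_s$ and siblings have disjoint $E$'s, and coverage follows from your observation that each routing step along the branch of $x$ freezes a fresh coordinate of the countable set $\{n:x_n\neq\omega_1\}$ to its $x$-value, so the branch has only countably many successor stages and must terminate by capturing $x$ (once $\dom(s)$ contains every non-top coordinate of $x$ one has $x=p_s\in A_s$). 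One further small correction: the cylinders $[s]$ are clopen simply because they are full boxes of clopen factors; the $P$-space property of $\omega_1+1$ is needed not for them but for the limit-stage sets $E$, which are countable intersections of clopen sets that are not themselves boxes.

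Once repaired, your argument is correct and is organised differently from the paper's. The paper obtains Theorem \ref{MERudinth} as the instance $\kappa=\aleph_0$, $\lambda_i=\omega_1$ of Theorem \ref{th:gen-aleph_omega}, whose proof (following Theorem \ref{th:aleph_omega}) builds a single $\omega_1$-indexed decreasing chain of \emph{global} pairwise disjoint clopen covers by order-intervals $[x,y]$, refining at successors by freezing an arbitrary set $A$ of coordinates at once, and concludes by a per-point cofinality argument in $\omega_1$: each coordinate of the enclosing interval stabilises, so the interval containing a given point stabilises below $\omega_1$ and must lie inside a cover element. You instead run a localised tree recursion, freezing one coordinate per step and terminating each branch by counting the non-top coordinates of the point being chased. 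Both exploit the same two features — co-countability of neighbourhoods of the top point and closure of the topology under countable intersections — so the gain of your version is mainly expository (no global stabilisation argument, termination is a simple injection into $\omega$), at the cost of heavier tree bookkeeping; the paper's version generalises more transparently to $\bigbox_{i<\kappa}(\lambda_i+1)$ with $\cf(\lambda_i)>\kappa$.
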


In Theorem \ref{E:D+1a} we shall prove van Douwen's theorem that ${}^{<\omega_1}\bigbox (\omega_1+1)^{\omega_1}$
is ultraparacompact. A consequence of our Theorem \ref{big}(b) is that under $CH+{\mathfrak b}(\omega_1)=2^{\omega_1}$, the full product 
$\bigbox (\omega_1+1)^{\omega_1}$ is ultraparacompact. 

On the way to Theorem \ref{E:D+1a}, we need to develop some machinery. The following definition can be found in 
van Douwen's article \cite{MR564100}, pg. 90.\footnote{The original definition of $\kappa$-metrisable spaces is due to Roman Sikorski in \cite{SikorskiDelta} and is somewhat stronger than the one that we use. It can be proved that the two definitions agree for $\kappa$ of uncountable cofinailty. Other definitions equivalent to ours also exist in the literature.}

\begin{definition}\label{def:kappametrisable} A space $X$ is {\em $\kappa$-metrisable} if there is a function 
\[
U: X \times \kappa \to \tau(X)
\]
such that
\begin{enumerate}[(1)]
\item $\{U(x,\alpha):\alpha < \kappa\}$ is a neighbourhood base at $x$ and
\item for all $x,y \in X$  and $\alpha\le \beta < \kappa$,
\begin{enumerate}[(a)]
\item if $y \in U(x,\alpha)$ then $U(y,\beta) \subseteq U(x,\alpha)$, and
\item if $y \notin U(x,\alpha)$ then $U(y,\beta) \cap U(x,\alpha) = \emptyset$.
\end{enumerate}
\end{enumerate}
\end{definition}

The property (2)(a) in particular implies that if $x\in X$ and $\alpha\le\beta$, then $U(x,\beta)\subseteq U(x,\alpha)$.
The interest of this generalised metrisability is that it implies ultraparacompactness, as shown in 
\cite{MR564100} pg. 90 Claim 2. We give a variation of the proof, since we shall need it in the proof of Theorem \ref{boundedmetrisable}.

\begin{example}\label{omegameteic} Recall that  an ultrametric $d^*$ on a topological space $X$ is a metric that in addition satisfies 
that for all $x,y,z\in X$
\[
d^*(x,y)\leq \max\{d^*(x,z),d^*(y,z)\}\,.
\]
The $\omega$-metrisable spaces are precisely the ultrametric spaces. Namely, suppose that $d^\ast$ is an ultrametric on the space $X$ and
define for every $x\in X$ that $U(x,0)=X$ and $U(x, n)=B_{d^\ast}(x,1/n)$, for $n>0$. To verify the condition (2)(b), we note that if $y\notin U(x, n)$,
then $n>0$ and $d^\ast(x,y)\ge 1/n$. If there were to be some $z$ such that $d^\ast(z,y)<1/n$ and $d^\ast(x,z)< 1/n$, then the property of ultrametric would imply that $d^\ast(y,x)< 1/n$, a contradiction. So $U(x,n)\cap U(y,m)=\emptyset$ for all $m\ge n$.

In the other direction, suppose that $X$ is $\omega$-metrisable. Without loss of generality we assume that for all $x\in X$ we have $U(x,0)=X$. Let us define for $x\neq y$ the value $d^\ast(x,y)=\frac{1}{\min\{n:\,U(x,n)\cap U(y,n)=\emptyset\}}$, which is well defined by property (1) of  $\omega$-metrisability. 
The proof that this is an ultrametric is a backwards run of the proof in the previous paragraph.

A concrete example of an $\omega$-metrisable space is the binary tree $2^{\omega}$ where $U(\rho,n)$ for $\rho\in 2^{\omega}$ consists of all $\sigma$ which have the same first $n$ digits as $\rho$.
\end{example}

\begin{claim}\label{vanDouwenCl2} Every $\kappa$-metrisable space $X$ is ultraparacompact, and moreover, if the sets $U(x,\alpha)$ are
basic (cl)open, then the refinements witnessing ultraparacompactness can always be chosen to be basic (cl)open sets.
\end{claim}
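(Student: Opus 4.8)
The plan is to extract from the data of $\kappa$-metrisability a tree-like system of clopen partitions, and then run the familiar ultrametric argument on it. The engine of the whole proof is a single \emph{dichotomy}: for any $x,y\in X$ and any $\alpha\le\beta<\kappa$, the sets $U(x,\alpha)$ and $U(y,\beta)$ are either disjoint or satisfy $U(y,\beta)\subseteq U(x,\alpha)$. This is exactly where conditions (2)(a) and (2)(b) are used: if the two sets meet, then the contrapositive of (2)(b) gives $y\in U(x,\alpha)$, and then (2)(a) gives $U(y,\beta)\subseteq U(x,\alpha)$.

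Specialising to $\alpha=\beta$ and arguing symmetrically, I would deduce that for a fixed level $\alpha$ the sets $\{U(x,\alpha):x\in X\}$ are pairwise either equal or disjoint. Since each point lies in its own $U(x,\alpha)$ (a neighbourhood base consists of open sets containing the point), these sets partition $X$; hence each is clopen, its complement being the union of the remaining classes. Thus every $U(x,\alpha)$ is clopen, and increasing $\alpha$ refines the partition. This is the tree structure I want.

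Next, given an open cover $\mathcal A$, property (1) lets me choose for each $x$ some $\alpha_x<\kappa$ with $U(x,\alpha_x)\subseteq A$ for some $A\in\mathcal A$, and I set $\mathcal F=\{U(x,\alpha_x):x\in X\}$. By the dichotomy $\mathcal F$ is a \emph{laminar} family of clopen sets (any two members nested or disjoint); it covers $X$ since $x\in U(x,\alpha_x)$, and it refines $\mathcal A$. It remains to disjointify it, and I would do this by passing to maximal members. The key is well-foundedness: along a strictly increasing chain in $\mathcal F$ the indices strictly decrease (by the dichotomy a strictly larger set forces a strictly smaller index), so such chains terminate. Concretely, for each $x$ let $M_x$ be the member of $\mathcal F$ through $x$ of least index; this is well defined because the indices of members containing $x$ form a nonempty set of ordinals, and the choice is unique by the fixed-level partition property. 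One then checks that distinct $M_x$ are disjoint: if $M_x\cap M_{x'}\ne\emptyset$, pick $v$ in the intersection and show $M_x=M_v=M_{x'}$ using maximality at $v$. Hence $\{M_x:x\in X\}$ is a disjoint clopen cover refining $\mathcal A$, which is exactly ultraparacompactness.

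The ``moreover'' clause then comes for free: each $M_x$ equals one of the sets $U(x,\alpha_x)$, so if all $U(x,\alpha)$ are basic (cl)open, the refinement automatically consists of basic (cl)open sets. I expect the main obstacle to be the disjointification rather than the dichotomy — one must ensure that maximal members genuinely \emph{exist} and \emph{cover} $X$, which is precisely the role of the well-ordering of the indices $\alpha<\kappa$, and one must verify that $M_x$ is well defined and yields a pairwise disjoint family.
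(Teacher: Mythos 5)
Your argument is correct and rests on the same two ingredients as the paper's proof: the observation that $y\in U(x,\alpha)$ forces $U(x,\alpha)=U(y,\alpha)$ (so each level is a clopen partition refining the earlier ones), and a minimisation over the well-ordered index set $\kappa$. The paper just compresses your two-step selection (choose $\alpha_x$, then take the least-index member of $\mathcal F$ through each point) into the single definition $\rho(x)=\min\{\alpha:\,(\exists O\in\OO)\,U(x,\alpha)\subseteq O\}$ and takes $\{U(x,\rho(x)):x\in X\}$ directly.
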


\begin{proof} Suppose that $U$ is a function witnessing the $\kappa$-metrisability of $X$. Note that for all $x,y\in X$ and $\alpha<\kappa$, if 
$y\in U(x,\alpha)$, then on the one hand $U(y,\alpha)\subseteq U(x,\alpha)$, by (2)(a), but then by (2)(b) also $x\in U(y,\alpha)$ and hence
$U(x,\alpha)=U(y,\alpha)$. Let $\OO$ be any open cover of $X$.

We can define $\rho:\,X\to\kappa$ by letting 
\[
\rho(x)=\min\{\alpha<\kappa:\,(\exists O\in \OO) \,U(x,\alpha)\subseteq O\}.
\]
Then $\{U(x,\rho(x)):\, x\in X\}$ is a disjoint refinement of $\OO$ (consisting of basic (cl)open sets if all $U(x,\alpha)$ are basic (cl)open).
$\eop_{\ref{vanDouwenCl2}}$
\end{proof}

We now prove the following Theorem \ref{P:G2} which gives sufficient conditions for $\kappa$-metrisability. Part (i) is stated as Fact 1 in van Douwen's article \cite{VANDOUWEN197771}.

\begin{theorem}\label{P:G2} Suppose that $\kappa,\lambda,\theta$ are infinite cardinals such that 
\begin{enumerate}
\item[(i)] either $\kappa=\lambda=\theta$ is regular or 
\item[(ii)] $\kappa=\lambda^+$  and \footnote{In this case ${}^{<\kappa}\bigbox_{\alpha<\lambda} X_\alpha=\bigbox_{\alpha<\lambda} X_\alpha$.} $\cf(\theta)>\lambda$.
\end{enumerate}
Then if $\langle X_\alpha: \,\alpha < \lambda\rangle$ is a sequence of $\theta$-metrisable spaces, 
then ${}^{<\kappa}\bigbox_{\alpha<\lambda} X_\alpha$ is also $\theta$-metrisable, and therefore ultraparacompact.
\end{theorem}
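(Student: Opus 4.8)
The plan is to build a single $\theta$-metrisability function on $Y:={}^{<\kappa}\bigbox_{\alpha<\lambda}X_\alpha$ coordinatewise from witnesses $U_\alpha:X_\alpha\times\theta\to\tau(X_\alpha)$ for the $\theta$-metrisability of the factors. I would fix an increasing (under $\subseteq$) sequence $\langle S_\beta:\beta<\theta\rangle$ of subsets of $\lambda$, each of size $<\kappa$, with $\bigcup_{\beta<\theta}S_\beta=\lambda$, and for $x=\langle x_\alpha:\alpha<\lambda\rangle\in Y$ and $\beta<\theta$ put
\[
U(x,\beta)=\{y\in Y:(\forall\alpha\in S_\beta)\ y_\alpha\in U_\alpha(x_\alpha,\beta)\}.
\]
This is a basic open set whose support is contained in $S_\beta$, hence of size $<\kappa$, so it is a legitimate neighbourhood of $x$ in the $(<\kappa)$-box topology. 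The two hypotheses dictate the choice of $\langle S_\beta\rangle$: in case (i) I take $S_\beta=\beta$ (an initial segment, of size $<\lambda=\kappa$), while in case (ii), where the product is the full $\lambda$-box product, I take $S_\beta=\lambda$ for every $\beta$ (of size $\lambda<\lambda^+=\kappa$).

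Verifying clauses (2)(a) and (2)(b) of Definition \ref{def:kappametrisable} for $U$ reduces to the corresponding clauses for the $U_\alpha$, and is precisely where the monotonicity $S_\alpha\subseteq S_\beta$ (for $\alpha\le\beta$) is used. For (2)(a), if $y\in U(x,\alpha)$ and $\alpha\le\beta$, then for each $\gamma\in S_\alpha\subseteq S_\beta$ we have $y_\gamma\in U_\gamma(x_\gamma,\alpha)$, so clause (2)(a) in $X_\gamma$ gives $U_\gamma(y_\gamma,\beta)\subseteq U_\gamma(x_\gamma,\alpha)$; reading this across all $\gamma\in S_\alpha$ yields $U(y,\beta)\subseteq U(x,\alpha)$. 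Clause (2)(b) is dual: if $y\notin U(x,\alpha)$, pick $\gamma\in S_\alpha$ with $y_\gamma\notin U_\gamma(x_\gamma,\alpha)$; clause (2)(b) in $X_\gamma$ makes $U_\gamma(y_\gamma,\beta)$ and $U_\gamma(x_\gamma,\alpha)$ disjoint, and since $\gamma\in S_\alpha\subseteq S_\beta$ this forces $U(y,\beta)\cap U(x,\alpha)=\emptyset$.

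The real content is clause (1), that $\{U(x,\beta):\beta<\theta\}$ is a neighbourhood base at $x$, and this is exactly where the arithmetic hypotheses enter. Given a basic open $O=\prod_\alpha O_\alpha\ni x$ with support $S$, $|S|<\kappa$, I choose for each $\alpha\in S$ an index $\delta_\alpha<\theta$ with $U_\alpha(x_\alpha,\delta_\alpha)\subseteq O_\alpha$, and must then find a single $\beta<\theta$ satisfying both $S\subseteq S_\beta$ and $\beta\ge\delta_\alpha$ for all $\alpha\in S$. With such a $\beta$, the monotonicity noted right after Definition \ref{def:kappametrisable} gives $U_\alpha(x_\alpha,\beta)\subseteq U_\alpha(x_\alpha,\delta_\alpha)\subseteq O_\alpha$ for $\alpha\in S$, while the remaining coordinates are unconstrained in $O$, so $U(x,\beta)\subseteq O$. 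I expect the only real obstacle to be producing this $\beta$, i.e.\ simultaneously bounding the $<\kappa$-sized set $\{\delta_\alpha:\alpha\in S\}$ and the support $S$ below $\theta$. In case (i) regularity of $\theta=\lambda=\kappa$ bounds $\{\delta_\alpha:\alpha\in S\}$ below $\theta$, and regularity of $\lambda$ gives $\sup S<\lambda$, so $S\subseteq S_{\sup S+1}$ and any $\beta$ exceeding both suprema works. In case (ii) the containment $S\subseteq S_\beta=\lambda$ is automatic, and $\cf(\theta)>\lambda\ge|S|$ bounds $\{\delta_\alpha:\alpha\in S\}$ below $\theta$.

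Having established the three clauses, $Y$ is $\theta$-metrisable, and ultraparacompactness then follows immediately from Claim \ref{vanDouwenCl2} (which is stated for an arbitrary index cardinal). I note that if each $U_\alpha(x_\alpha,\beta)$ is basic clopen, then so is $U(x,\beta)$, so in that case the refinements witnessing ultraparacompactness can be taken to be basic clopen, in line with the last clause of Claim \ref{vanDouwenCl2}.
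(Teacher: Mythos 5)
Your proposal is correct and follows essentially the same route as the paper: both build the witness coordinatewise from the factor witnesses, using growing initial-segment supports in case (i) and (in effect) full support in case (ii), and both verify clause (1) by bounding the $<\kappa$-many indices $\delta_\alpha$ below $\theta$ via regularity of $\theta$ in case (i) and via $\cf(\theta)>\lambda$ in case (ii). Your packaging of the support schedule as an abstract increasing sequence $\langle S_\beta\rangle$ is a mild streamlining of the paper's case split (the paper uses $S_i=i+1$ for $i<\kappa$ and the full product only for $i\ge\kappa$ in case (ii)), but the argument is the same.
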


\begin{proof} Let $X={}^{<\kappa}\bigbox_{\alpha<\lambda} X_\alpha$. For every $\alpha <\lambda$, let 
$U_\alpha: X_\alpha \times \theta \to \tau(X_\alpha)$
be a function witnessing the $\theta$-metrisability of $X_\alpha$.
We define
$W: X \times \theta \to \tau(X)$
by
\[
W(x, i) = \prod_{\alpha < \lambda}W^\alpha_i (x_\alpha)\,,
\]
where for $\alpha<\lambda$ and $i<\kappa$ we have
\[
W^\alpha_i (x_\alpha) = \begin{cases} 
U_\alpha(x_\alpha,i) & \text{if }\alpha \le i\, \\
X_\alpha & \text{ if } i<\alpha 
\end{cases}
\]
and for $i\ge\kappa$, in the case $\cf(\theta) >\lambda$, we define $W(x, i) = \prod_{\alpha < \lambda} U_\alpha(x_\alpha,i)$, 
for any $x = (x_\alpha)_{\alpha<\lambda}\in X$. We shall check that $W$ witnesses that $X$ is a
$\theta$-metrisable space. 

Since $W^\alpha_i (x_\alpha)\neq X_\alpha$ implies that $\alpha\le i <\kappa$, we obtain that 
$W(x,i)$ is open in $X$. Let $x = (x_\alpha)_{\alpha<\lambda}\in X$ and let $O$ be a basic open set
containing $x$. Hence $O= \prod_{\alpha < \lambda}O_\alpha$ where for each $\alpha$ we have that $x_\alpha\in O_\alpha$
and that $S\deq\{\alpha<\lambda:\, O_\alpha\neq X_\alpha\}$ is a set of size $<\kappa$. For each 
$\alpha<\lambda$ choose
an $i_\alpha<\theta$ such that $U_\alpha(x_\alpha, i_\alpha)\subseteq O_\alpha$. 

(i) In the case that $\kappa=\lambda=\theta$ is regular, we have that each $i_\alpha<\kappa$, so letting $i^\ast=\sup_{\alpha\in S} i_\alpha$,
by the regularity of $\kappa$ we have that $i^\ast<\kappa=\theta$.
For every $\alpha\in S$ we have $U_\alpha(x_\alpha, i^\ast)$ is well defined and $U_\alpha(x_\alpha, i^\ast)\subseteq U_\alpha(x_\alpha, i_\alpha)\subseteq O_\alpha$. Since for $\alpha\notin S$ we have $O_\alpha=X_\alpha$, we can conclude 
$U_\alpha(x_\alpha, i^\ast)\subseteq O_\alpha$ for every $\alpha<\lambda$. Hence $W(x,i^\ast)\subseteq O$.

(ii) In the case that $\kappa=\lambda^+$, on the one hand, $\cf(\theta)>\lambda$, so $i^\ast<\theta$ and for every
$\alpha<\lambda$, we have that $U_\alpha(x_\alpha, i^\ast)$ is well defined. On the other hand,
we are dealing with the full box topology, so we have that $W(x, i^\ast)=\prod_{\alpha < \lambda} U_\alpha(x_\alpha, i^\ast)
\subseteq O$. 

We have hence proved the property (1) from Definition \ref{def:kappametrisable}, since it suffices to verify it for basic open sets.

To prove (2)(a), suppose that $x,y \in X$ and $y\in W(x,i)$ for some $i<\kappa$. This means that for every $\alpha \le i$ we have 
$y_\alpha \in U_\alpha(x_\alpha,i)$, so for all $j\ge i$ we have $U_\alpha(y_\alpha,j)\subseteq U_\alpha(x_\alpha,i)$.
On the other hand, in the case $\kappa=\lambda$, if $j\ge i$ and $\alpha > i$ then we have that $W^\alpha_i(x_\alpha)=X_\alpha$ so certainly 
$W^\alpha_j(y_\alpha)\subseteq W^\alpha_i(x_\alpha)$. If $\kappa=\lambda^+$ and $i\ge\lambda$ we argue similarly to the case $i<\kappa$. Hence $W(y,j)\subseteq W(x,i)$.

Finally for (2)(b), if $x,y\in X$ and  $y\notin W(x,i)$ for some $i<\kappa$, then there exists some $\alpha\le i$ such that $y_\alpha\notin 
U_\alpha(x_\alpha,i)$. Therefore for all $j\ge i$ we have that $U_\alpha(y_\alpha, j)\cap U_\alpha(x_\alpha, i)=\emptyset$ and so for all
$j\ge i$ we have $U(y,j)\cap U(x,i)=\emptyset$. In the case $\kappa=\lambda^+$ and $i\ge\kappa$ the argument is similar.
$\eop_{\ref{P:G2}}$
\end{proof}

A direct consequence of Theorem \ref{P:G2}, applied to the cases $\kappa=\omega_1=\lambda$ is the following.

\begin{cor}\label{C:G1}
Suppose that $\langle X_\alpha: \,\alpha < \omega_1\rangle$ is a sequence of $\omega_1$-metrisable 
topological spaces. 
Then ${}^{<\omega_1}\!\bigbox_{\alpha<\omega_1} X_\alpha$ is also $\omega_1$-metrisable, and therefore ultraparacompact.
\end{cor}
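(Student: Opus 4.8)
The plan is to obtain this as an immediate instance of Theorem \ref{P:G2}, specialising the three cardinal parameters to $\kappa=\lambda=\theta=\omega_1$. First I would check that this choice falls under hypothesis (i) of Theorem \ref{P:G2}, which requires $\kappa=\lambda=\theta$ to be a regular cardinal: this holds because $\omega_1$ is regular, being a successor cardinal. No further case distinction is needed, and in particular we do not invoke hypothesis (ii), which is the one tailored to the full box product.

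With the hypothesis verified, Theorem \ref{P:G2} applies verbatim to the sequence $\langle X_\alpha:\,\alpha<\omega_1\rangle$ of $\omega_1$-metrisable spaces and yields that ${}^{<\omega_1}\bigbox_{\alpha<\omega_1} X_\alpha$ is itself $\omega_1$-metrisable. I would then invoke Claim \ref{vanDouwenCl2}, which guarantees that every $\kappa$-metrisable space is ultraparacompact, to conclude that the product is ultraparacompact (and a fortiori paracompact).

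There is essentially no obstacle here beyond confirming the regularity of $\omega_1$; the content of the corollary lies entirely in Theorem \ref{P:G2}, which has already constructed the witnessing function $W$ and verified the conditions of Definition \ref{def:kappametrisable}. The only point worth making explicit is that the regular case (i) is precisely what governs the $(<\omega_1)$-box product with $\omega_1$ many factors: the coincidence $\kappa=\lambda=\theta=\omega_1$ is what makes the supremum argument in the proof of Theorem \ref{P:G2}(i) go through, since the support $S$ of any basic open set has size $<\omega_1$ and is therefore bounded below $\omega_1$ by regularity, so that $i^\ast=\sup_{\alpha\in S} i_\alpha<\omega_1$.
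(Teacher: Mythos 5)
Your proposal is correct and matches the paper exactly: the corollary is stated there as a direct consequence of Theorem \ref{P:G2} with $\kappa=\lambda=\theta=\omega_1$ (case (i), using the regularity of $\omega_1$), with ultraparacompactness then following from Claim \ref{vanDouwenCl2}. Your extra remark about why the supremum argument needs regularity is accurate but not required.
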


To obtain Theorem \ref{E:D+1a} , we need to make the following easy observation.

\begin{claim}\label{omega_1metrisable} The space $\omega_1+1$ is $\omega_1$-metrisable.
\end{claim}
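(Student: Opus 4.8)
The plan is to exhibit an explicit $\omega_1$-metrisability function $U:(\omega_1+1)\times\omega_1\to\tau(\omega_1+1)$ and verify the three conditions of Definition \ref{def:kappametrisable} directly. Recall the topology: every ordinal $\gamma<\omega_1$ is isolated, while the top point $\omega_1$ has as neighbourhoods the co-countable sets containing it. The natural definition is to let $U(\gamma,\alpha)=\{\gamma\}$ for every isolated $\gamma<\omega_1$ and every $\alpha<\omega_1$, and for the top point to set $U(\omega_1,\alpha)=(\alpha,\omega_1]=\{\beta:\alpha<\beta\le\omega_1\}$, the tail neighbourhood determined by $\alpha$. Since each $(\alpha,\omega_1]$ is co-countable (its complement is $\alpha+1$, a countable set), these are genuine open neighbourhoods of $\omega_1$, and as $\alpha$ ranges over $\omega_1$ they form a neighbourhood base there; singletons obviously form a base at each isolated point. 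This gives condition (1).

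For condition (2), I would check the two clauses by a short case analysis on whether the base point is isolated or is the top point. If $x=\gamma$ is isolated then $U(x,\alpha)=\{\gamma\}$, so $y\in U(x,\alpha)$ forces $y=\gamma$ and then $U(y,\beta)=\{\gamma\}=U(x,\alpha)\subseteq U(x,\alpha)$, giving (2)(a); and $y\notin U(x,\alpha)$ means $y\neq\gamma$, whence $U(y,\beta)$ (whatever it is) misses $\gamma$ and so is disjoint from $\{\gamma\}=U(x,\alpha)$, giving (2)(b). The only substantive case is $x=\omega_1$: here $U(x,\alpha)=(\alpha,\omega_1]$. For (2)(a), if $y\in(\alpha,\omega_1]$ and $\beta\ge\alpha$, then either $y$ is isolated with $\alpha<y$, so $U(y,\beta)=\{y\}\subseteq(\alpha,\omega_1]$, or $y=\omega_1$, so $U(y,\beta)=(\beta,\omega_1]\subseteq(\alpha,\omega_1]$ because $\beta\ge\alpha$. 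For (2)(b), if $y\notin(\alpha,\omega_1]$ then $y\le\alpha<\omega_1$, so $y$ is isolated and $U(y,\beta)=\{y\}$ with $y\le\alpha$, which is disjoint from $(\alpha,\omega_1]$.

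The main point to be careful about is the monotonicity built into clause (2): the definition demands that the conditions hold for \emph{all} $\alpha\le\beta<\kappa$ simultaneously, so the tail neighbourhoods must shrink as the index grows, which is exactly why I take $U(\omega_1,\alpha)=(\alpha,\omega_1]$ rather than some fixed co-countable set. I do not anticipate a genuine obstacle here, as the space is essentially a one-point "compactification-like" extension of a discrete space of size $\omega_1$ and the ultrametric-style structure is transparent; the verification is entirely routine case-checking. One small thing worth noting explicitly is that all the sets $U(x,\alpha)$ produced are in fact clopen (singletons are clopen by discreteness below $\omega_1$, and each $(\alpha,\omega_1]$ is clopen since its complement $\alpha+1$ is a countable union of isolated points, hence open), so Claim \ref{vanDouwenCl2} then yields ultraparacompactness of $\omega_1+1$ with clopen refinements, which is presumably what is needed downstream for Theorem \ref{E:D+1a}.
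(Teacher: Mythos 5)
Your construction does not satisfy Definition \ref{def:kappametrisable}(2)(b), and the error is in the sentence ``$y\notin U(x,\alpha)$ means $y\neq\gamma$, whence $U(y,\beta)$ (whatever it is) misses $\gamma$.'' Take $x=\gamma$ an isolated point, $y=\omega_1$, and any $\alpha\le\beta<\gamma$. Then $y=\omega_1\notin\{\gamma\}=U(x,\alpha)$, but $U(y,\beta)=(\beta,\omega_1]$ contains $\gamma$, so $U(y,\beta)\cap U(x,\alpha)=\{\gamma\}\neq\emptyset$. Condition (2)(b) is a strong symmetry requirement: as noted after Claim \ref{vanDouwenCl2}, (2)(a) and (2)(b) together force $U(x,\alpha)=U(y,\alpha)$ whenever $y\in U(x,\alpha)$, so for each fixed $\alpha$ the sets $U(\cdot,\alpha)$ must partition the space. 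Your family $\bigl\{\{\gamma\}:\gamma<\omega_1\bigr\}\cup\bigl\{(\alpha,\omega_1]\bigr\}$ is not a partition for any $\alpha$, so no case-checking can rescue it.

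The paper's proof repairs exactly this point by making the assignment at isolated points depend on the index: it sets $U(\zeta,\alpha)=\{\zeta\}$ only when $\zeta<\alpha$, and $U(\zeta,\alpha)=[\alpha,\omega_1]$ whenever $\zeta\ge\alpha$ (including $\zeta=\omega_1$). Then for each fixed $\alpha$ the sets $U(\cdot,\alpha)$ form the partition $\bigl\{\{\zeta\}:\zeta<\alpha\bigr\}\cup\bigl\{[\alpha,\omega_1]\bigr\}$, which refines as $\alpha$ grows, and both clauses of (2) are immediate. An isolated point $\zeta$ still eventually acquires $\{\zeta\}$ as its neighbourhood (for all $\alpha>\zeta$), so condition (1) holds as well. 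Your closing remarks about clopenness and the downstream use in Theorem \ref{E:D+1a} are fine once the definition of $U$ is corrected in this way.
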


\begin{proof} For $\zeta\le\omega_1$ and $\alpha<\omega_1$ let 
\[
U(\zeta, \alpha)=
\begin{cases}
\{\zeta	\}&\mbox{if }\zeta<\alpha\\
[\alpha,\omega_1] &\mbox{otherwise}.
\end{cases}
\]
$\eop_{\ref{omega_1metrisable}}$
\end{proof}

As a direct consequence of Corollary \ref{C:G1}, we finally obtain the following theorem of van Douwen (discussion on top of 75 in
\cite{VANDOUWEN197771}):

\begin{theorem}[E. van Douwen]\label{E:D+1a} 
The space ${}^{<\omega_1}\bigbox (\omega_1+1)^{\omega_1}$ is $\omega_1$-metrisable and hence ultraparacompact.
\end{theorem}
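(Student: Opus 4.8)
The plan is to observe that this theorem is an immediate specialisation of the machinery already in place, so the work amounts to instantiating the earlier results correctly. Specifically, I would take the constant sequence $\langle X_\alpha : \alpha < \omega_1\rangle$ with $X_\alpha = \omega_1+1$ for every $\alpha<\omega_1$, and apply Corollary \ref{C:G1} to it.

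First I would invoke Claim \ref{omega_1metrisable}, which establishes that the single space $\omega_1+1$ is $\omega_1$-metrisable, via the explicit witnessing function $U$ given there. Since every factor $X_\alpha$ equals this space, the hypothesis of Corollary \ref{C:G1} — that we are handed a sequence of $\omega_1$-metrisable spaces indexed by $\omega_1$ — is satisfied. Then I would apply Corollary \ref{C:G1} to conclude that ${}^{<\omega_1}\bigbox_{\alpha<\omega_1} X_\alpha$, which is precisely ${}^{<\omega_1}\bigbox(\omega_1+1)^{\omega_1}$, is itself $\omega_1$-metrisable. Ultraparacompactness then follows at once, either as already packaged in the statement of Corollary \ref{C:G1} or directly from Claim \ref{vanDouwenCl2}, since $\omega_1$-metrisability implies ultraparacompactness.

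There is no genuine obstacle at this final stage, as all the substantive content has been carried out in the proof of Theorem \ref{P:G2}: its case (i), with $\kappa=\lambda=\theta=\omega_1$ regular, handles exactly this configuration. Had that theorem not been available, the main difficulty would lie in verifying condition (1) of Definition \ref{def:kappametrisable} for the product — namely, given a basic open neighbourhood of $x$ with support $S$ of size $<\omega_1$, one must produce a single index $i^\ast<\omega_1$ working simultaneously across all coordinates. This is achieved by setting $i^\ast=\sup_{\alpha\in S} i_\alpha$ and using the regularity of $\omega_1$ to guarantee $i^\ast<\omega_1$. The tapering construction $W^\alpha_i$, which shrinks only the coordinates $\alpha\le i$ and leaves the rest equal to the whole factor, is precisely what keeps each witnessing set $W(x,i)$ a basic open set in the $(<\omega_1)$-box topology, while conditions (2)(a) and (2)(b) descend coordinatewise from the corresponding properties in each factor. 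Thus the only thing to check here is that the specialisation is legitimate, which it plainly is.
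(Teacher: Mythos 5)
Your proposal is correct and follows exactly the paper's route: establish that $\omega_1+1$ is $\omega_1$-metrisable (Claim \ref{omega_1metrisable}) and then apply Corollary \ref{C:G1}, i.e.\ the case $\kappa=\lambda=\theta=\omega_1$ of Theorem \ref{P:G2}, to the constant sequence of factors. Nothing is missing.
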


Of course, the same methods yield the ultraparacompactness of spaces of the form 
${}^{<\lambda}\bigbox (\lambda+1)^{\lambda}$ where $\lambda$ is a regular cardinal and 
$\lambda+1$ is given the topology in which $\lambda$ is discrete and the point $\lambda$ has open neighbourhoods of the form $A\cup\{\lambda\}$ where $A$ is a final segment of $\lambda$. Similarly for other variations fitting the cardinal requirements of Theorem \ref{P:G2}. 

One may wonder if this method can yield the solution of Question \ref{open}, applying it with $\kappa=\omega_1$ and
$\lambda=\omega$. Unfortunately, this does not work as $\omega+1$ is only $\omega$-metrisable, not 
$\omega_1$-metrisable.  Examples when the normality fails were obtained by Arthur Stone \cite{MR26802}, Carlos Borges \cite{MR253274} and Bankston \cite{MR0454898}, see \S3 of  \cite{MR0454898}. Similarly, Theorem \ref{P:G2} does not generalise to the situation $\kappa<\lambda$, as shown by the following.

\begin{theorem}\label{ex:negative} Suppose that $\lambda\ge\omega_2$ and $\langle X_\alpha:\,\alpha<\lambda\rangle$ are spaces where $S=\{\alpha <\lambda:\,|X_\alpha|\ge 2\}$ has size at least $\aleph_2$. Then the space 
$X={}^{<\omega_1}\prod_{\alpha<\lambda} X_\alpha$ is not normal (and hence certainly not paracompact or ultraparacompact).
\end{theorem}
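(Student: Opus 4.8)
The plan is to reduce to the canonical case of a binary power and then produce two closed sets that no pair of disjoint open sets can separate, the inseparability being forced by a pressing-down argument on $\omega_2=\omega_1^+$. For the reduction, for each $\alpha\in S$ pick two distinct points $0_\alpha\ne 1_\alpha$ in $X_\alpha$; since $X_\alpha$ is Hausdorff the set $\{0_\alpha,1_\alpha\}$ is closed in $X_\alpha$ and discrete in the subspace topology. Fixing any $p_\alpha\in X_\alpha$ for $\alpha\notin S$, the subspace $\prod_{\alpha\in S}\{0_\alpha,1_\alpha\}\times\prod_{\alpha\notin S}\{p_\alpha\}$ is a product of closed sets, hence closed in the Tikhonov product and a fortiori in the finer $(<\omega_1)$-box topology; it is homeomorphic to ${}^{<\omega_1}\bigbox 2^{|S|}$. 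Using the paper's remark that cutting the number of factors yields a closed subspace, I further pass to exactly $\omega_2$ coordinates. Since a closed subspace of a normal space is normal, it suffices to prove that $Y:={}^{<\omega_1}\bigbox 2^{\omega_2}$ is not normal.

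I would emphasise that the usual cardinal-arithmetic shortcut (Jones' Lemma, via a large closed discrete set against a small dense set) is \emph{not} available here in ZFC: the threshold $\aleph_2=\omega_1^+$ is tight, because by Corollary \ref{C:G1} the space ${}^{<\omega_1}\bigbox 2^{\omega_1}$ is $\omega_1$-metrisable and hence ultraparacompact, so $Y$ is ``one coordinate past'' paracompactness and admits no convenient large closed discrete subspace. Thus I would construct the two closed sets by hand. Viewing points of $Y$ as functions $f:\omega_2\to 2$, recall that a basic clopen neighbourhood $[s]$ fixes only countably many coordinates, i.e.\ $|\supt([s])|<\omega_1$. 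The engine of the argument is the stationary set $S^2_1=\{\delta<\omega_2:\cf(\delta)=\omega_1\}$: for $\delta\in S^2_1$ and any countable $C\subseteq\omega_2$, the set $C\cap\delta$ is bounded below $\delta$, so assigning to $\delta$ the bound $\sup(C_\delta\cap\delta)$ is a regressive map and Fodor's lemma returns a stationary set on which these bounds stabilise below a common $\gamma<\omega_2$.

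Concretely I would index two families of markers $\{a_\delta:\delta\in S^2_1\}$ and $\{b_\delta:\delta\in S^2_1\}$ by fixing, for each $\delta\in S^2_1$, a continuous cofinal sequence $\langle\delta_i:i<\omega_1\rangle$ and defining $a_\delta,b_\delta$ through an alternating/gap pattern along $\langle\delta_i\rangle$, together with their limit behaviour, arranged so that $A$ and $B$ (the closures of the two families) are disjoint and closed. Given a putative separation by open $U\supseteq A$ and $V\supseteq B$, I would choose for each marker a basic neighbourhood inside $U$ (resp.\ $V$), record its countable support, apply the pressing-down step above to stabilise the supports of stationarily many markers below a single $\gamma$, and then exhibit a single function agreeing with a suitable $a_\delta$ and $b_{\delta'}$ on their (now compatible) supports, i.e.\ a point of $U\cap V$, contradicting disjointness.

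The main obstacle is precisely the construction of $A$ and $B$ in the third paragraph, not the Fodor step, which is routine once the markers are in place. Two things must be engineered simultaneously and are in tension: the sets must be genuinely \emph{closed} in the countable-support box topology (so every outside point needs a countable-support neighbourhood missing the set, a real constraint given that countable supports are ``short'' relative to $\omega_2$), yet sufficiently \emph{entangled} along the cofinal sequences that stabilised supports force a common refinement point. Balancing closedness against entanglement is the delicate part; constructions of essentially this flavour underlie the non-normality examples of Stone, Borges and Bankston cited after the statement, and I would model the marker patterns on those, adapting them to the cofinality-$\omega_1$ skeleton of $\omega_2$.
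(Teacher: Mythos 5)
Your reduction is exactly the paper's: pick a doubleton inside each $X_\alpha$ for $\alpha\in S$ (using Hausdorffness), observe that the resulting product of closed sets is a closed subspace of $X$ homeomorphic to ${}^{<\omega_1}\bigbox\, 2^{|S|}$, pass to a closed copy with exactly $\omega_2$ coordinates, and invoke heredity of normality to closed subspaces. That part is correct and, if anything, more carefully justified than in the paper. Your side remarks are also sound: the threshold is tight because of Corollary \ref{C:G1}, and the subspace topology on the doubleton product really is the $(<\omega_1)$-box topology since the doubletons are discrete.

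The genuine gap is in the second half. The paper does \emph{not} prove that ${}^{<\omega_1}\bigbox\, 2^{\omega_2}$ fails to be normal; it cites this as Theorem B of van Douwen's paper \cite{VANDOUWEN197771} and stops. You instead set out to prove it directly by exhibiting two disjoint closed sets that cannot be separated, via a pressing-down argument on $S^2_1=\{\delta<\omega_2:\cf(\delta)=\omega_1\}$. But the load-bearing object of that argument --- the two families of markers $\{a_\delta\}$, $\{b_\delta\}$ and the verification that their closures $A$ and $B$ are disjoint, closed, and entangled enough that stabilised countable supports produce a point of $U\cap V$ --- is never constructed. You say so yourself (``the main obstacle is precisely the construction of $A$ and $B$''). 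As it stands the Fodor step has nothing to press down on, so the non-normality of ${}^{<\omega_1}\bigbox\, 2^{\omega_2}$ is asserted rather than proved. To close the gap you must either carry out the construction (the Stone--Borges route, also used in Theorem \ref{nosucessor}, is the cleanest: by Lemma \ref{F:1} the space ${}^{<\omega_1}\bigbox\, 2^{\omega_2}$ is homeomorphic to ${}^{<\omega_1}\bigbox\,\omega_1^{\omega_2}$, where the classical pair of closed sets $A_i=\{f:\ f^{-1}(\xi)$ is at most a singleton for every $\xi\neq i\}$, $i=0,1$, can be shown inseparable by a $\Delta$-system/pressing-down argument) or simply cite the known theorem, as the paper does.
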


\begin{proof} We first notice that $X$ has a closed subspace a product of the form 
\[
F_\alpha=
\begin{cases}
\{x_\alpha, y_\alpha\}\mbox{ if }\alpha\in S\\
\{x_\alpha\}\mbox{ otherwise},
\end{cases}
\]
(for some $x, y\in X$) which contains a closed homeomorphic copy of ${}^{<\omega_1}\{0,1\}^{\omega_2}$. The latter space is not normal, as shown by
van Douwen, Theorem B of \cite{VANDOUWEN197771}. As the normality is inherited by closed spaces, we conclude that $X$ is not normal.
$\eop_{\ref{ex:negative}}$
\end{proof}

We shall discuss further examples of non-paracompactness of box products in \S\ref{sec:large_car}.

\section{Sikorski spaces and Kunen's and Miller's Theorems for uncountable $\kappa$}\label{sec:Sikorski}
As mentioned in \S\ref{sec:intro}, one of the main motivations of studying the paracompactness of box products is that, knowing that such products are not compact for general $\kappa$ even when the factors consist of the two element discrete space, they however sometimes turn out to be paracompact. In this section we shall endow our individual spaces with some element of compactness and then study the paracompactness of the $\kappa$-box product. 
Let us state the definition of the level of compactness we need.

\begin{definition}\label{<kappacompact} Let $\kappa$ be an infinite cardinal.  A topological space $X$ is said to be 
{\em $(<\kappa)$-compact} if every open cover of $X$ has an open subcover of size $<\kappa$.
\end{definition}

This notion is well studied in set theory, in model theory and in topology and in particular gives a well known characterisation of  strong compactness of a cardinal $\kappa$: an uncountable $\kappa$ is strongly compact iff ${}^{<\kappa}\square\, 2^\lambda$ is $(<\kappa)$-compact for any $\lambda\ge\kappa$ iff for any $(<\kappa)$-compact $X$ the product ${}^{<\kappa}\square X^\lambda$ is $(<\kappa)$-compact, for every $\lambda\ge\kappa$ (for a proof see Theorem 2.10 in \cite{BuhagiarDzamonja1}). In view of this result, it is natural to ask in which circumstances the product of the form ${}^{<\kappa}\square X^\lambda$
for a $(<\kappa)$-compact $X$, is paracompact. In Section \ref{sec:large_car} we shall be specifically interested in that question when $X$ is simply the point 
space 2.

We shall work in the class of regular spaces and will use an idea of Sikorski \cite{SikorskiDelta}, who was the first one to consider spaces called $(<\kappa)$-open here (and which Sikorski called $\kappa$-additive). 

\begin{definition}\label{def:kappaopen} A space $X$ is {\em $(<\kappa)$-open} if the intersection of any family of
$<\kappa$ many open sets in $X$, is open.
\end{definition}

In the presence of 
$(<\kappa)$-compactness, the class of regular $(<\kappa)$-open spaces behaves very nicely with respect to products, which was exploited by Kunen in 
\cite{MR514975} and which 
we shall use to our advantage to prove Theorem \ref{big} below. In honour of Sikorski, we introduce the following definition.

\begin{definition}\label{def:Sikorskispace} A {\em $\kappa$-Sikorski space} is a regular $(<\kappa)$-compact $(<\kappa)$-open space.
\end{definition}

The following is the main theorem of this section.

\begin{theorem}\label{big} Suppose that $\kappa=\kappa^{<\kappa}$ and $\langle X_\alpha:\,\alpha<\kappa^+\ra$ is a sequence of $\kappa$-Sikorski 
spaces each of weight $\le\kappa$. Then:
\begin{enumerate}[{\rm (a)}]
\item ${}^{<\kappa}\bigbox_{\alpha<\kappa} X_\alpha$ is paracompact.
\item If ${\mathfrak b}(\kappa)=\kappa^+$, then $\bigbox_{\alpha<\kappa} X_\alpha$ is paracompact.
\item If $2^\kappa=\kappa^+$, then ${}^{<\kappa^+}\bigbox_{\alpha<\kappa^+} X_\alpha$ is paracompact.
\end{enumerate}
\end{theorem}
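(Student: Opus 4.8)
The plan is to handle the three parts with increasingly heavy machinery, reducing (a) to the metrisability results already in hand and (b), (c) to combinatorial covering arguments controlled by the cardinal hypotheses.

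For (a) I would first isolate the role of the Sikorski hypotheses in a lemma: \emph{every $\kappa$-Sikorski space $X$ of weight $\le\kappa$ is $\kappa$-metrisable}, for $\kappa$ uncountable (the case $\kappa=\omega$ is the Tikhonov product of compacta and is trivially compact, hence paracompact). Regularity together with $(<\kappa)$-openness makes $X$ zero-dimensional: given $x\in U$ with $U$ open, iterate regularity to get $U\supseteq\overline{U_0}\supseteq U_0\supseteq\overline{U_1}\supseteq\cdots$, and note that $\bigcap_{n<\omega}U_n=\bigcap_{n<\omega}\overline{U_n}$ is clopen (open by $(<\kappa)$-openness since $\kappa>\omega$, closed as a countable intersection of closed sets) and traps $x$ inside $U$. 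Fixing a clopen base $\{B_\xi:\xi<\kappa\}$, set $U(x,\alpha)=\bigcap_{\xi<\alpha}B_\xi^{x}$, where $B_\xi^x=B_\xi$ if $x\in B_\xi$ and $B_\xi^x=X\setminus B_\xi$ otherwise; this is clopen by $(<\kappa)$-openness. The atoms $U(x,\alpha)$ nest, and any two are either nested or disjoint, which is exactly conditions (2)(a),(b) of Definition \ref{def:kappametrisable}, while the base property gives (1). With this lemma, part (a) is immediate: applying Theorem \ref{P:G2}(i) with $\kappa=\lambda=\theta$ to the $\kappa$-metrisable factors shows that ${}^{<\kappa}\bigbox_{\alpha<\kappa}X_\alpha$ is $\kappa$-metrisable, hence ultraparacompact by Claim \ref{vanDouwenCl2}. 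Note this half uses neither $(<\kappa)$-compactness nor a cardinal-arithmetic assumption beyond $\kappa^{<\kappa}=\kappa$, which only enters to guarantee weight $\le\kappa$.

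For (b) and (c) the product has weight $>\kappa$ and need not be $(<\kappa^+)$-open, so the $\kappa$-metric breaks and I would build disjoint clopen refinements by hand, exploiting zero-dimensionality and the $(<\kappa)$-compactness of bounded-support slices. The common engine is: given a clopen cover $\OO$, attach to each point $x$ its canonical decreasing chain of basic clopen atoms together with the least ``modulus'', a function recording coordinatewise how tight an atom must be to land inside some member of $\OO$. Because each factor is $(<\kappa)$-compact and $(<\kappa)$-open, along any bounded set of coordinates only $<\kappa$ distinct constraints arise — this rests on the Tychonov-type fact that a $(<\kappa)$-box product of $<\kappa$ many $\kappa$-Sikorski spaces is again $(<\kappa)$-compact. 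In (b), $\mathfrak{b}(\kappa)=\kappa^+$ (Definition \ref{def:kappaopencard}) says every family of $\le\kappa$ functions in $({}^\kappa\kappa,\le^\ast_\kappa)$ is bounded, so the $\le\kappa$ moduli produced by a single slice are dominated by one function, selecting a single atom that refines $\OO$ there; assembling these atoms with the usual ``$\le^\ast_\kappa$-least modulus'' bookkeeping yields a disjoint clopen refinement. This is the generalisation of van Douwen's scale argument that is the theme of \S\ref{sec:scaling}. In (c) the relevant count is instead that $2^\kappa=\kappa^+$ bounds by $\kappa^+$ the number of basic clopen sets, hence the amount of data to be satisfied; I would well-order an arbitrary clopen cover in type $\kappa^+$ and construct a disjoint clopen refinement by transfinite recursion, at stage $\xi<\kappa^+$ using $(<\kappa)$-compactness to cover the newly-uncovered points with few sets and the fact that $<\kappa^+$ sets are committed so far to preserve disjointness. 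This generalises the CH arguments of Kunen and Miller (Theorems \ref{Kunenth} and \ref{T:M82}).

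The hardest part will be the covering/recursion arguments in (b) and (c), and specifically making local finiteness — indeed disjointness — survive at limit stages. For (b) the delicate point is that the bounding function chosen for one slice must be reconciled with those of nearby points so that the resulting clopen atoms are pairwise disjoint rather than merely covering; this is where the monotonicity of $\le^\ast_\kappa$ and the atom structure must be combined. For (c) the length-$\kappa^+$ recursion must be arranged so that at limits of cofinality $\kappa$ one can still certify disjointness from the $\le\kappa$ commitments visible there, which is exactly where $2^\kappa=\kappa^+$ and the $(<\kappa)$-compactness of slices interact. I also expect the Tychonov-type preservation of $(<\kappa)$-compactness under small box products of Sikorski spaces, used throughout (b) and (c), to require separate verification.
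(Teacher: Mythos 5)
Your treatment of part (a) is correct and is a genuinely different (and slightly stronger) route than the paper's: you show each factor is $\kappa$-metrisable --- zero-dimensionality from regularity plus $(<\kappa)$-openness, then the ``atom'' function $U(x,\alpha)=\bigcap_{\xi<\alpha}B_\xi^x$ built from a clopen base of size $\le\kappa$ --- and feed this into Theorem \ref{P:G2}(i), obtaining ultraparacompactness. The paper instead just observes that ${}^{<\kappa}\bigbox_{\alpha<\kappa}X_\alpha$ is regular, $(<\kappa)$-open and of weight $\le\kappa$ (hence $(<\kappa^+)$-compact) and quotes Lemma \ref{notchtheorem}; your version buys a $\kappa$-metric on the product, the paper's is shorter. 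Either way (a) is the easy part.

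For (b) and (c) there is a genuine gap: the central device is missing. The paper's proof runs both parts through the reduced product $\nabla_{\alpha<\kappa}X_\alpha$ (respectively ${}^{<\kappa^+}\nabla_{\alpha<\kappa^+}X_\alpha$): one proves the quotient map $q$ is \emph{closed} (Lemmas \ref{C:6.12d}, \ref{L:GML4}) with $(<\kappa^+)$-compact fibres (Lemma \ref{L:GML5}), proves the Nabla itself is paracompact --- and this is exactly where $\mathfrak b(\kappa)$ enters, via the $(<\mathfrak b(\kappa))$-openness of the Nabla in Corollary \ref{C:6.11d}(b), and where $2^\kappa=\kappa^+$ enters, via the weight of the Nabla together with Lemma \ref{lm:openess} --- and then lifts paracompactness back along the closed map by the Michael-type Lemma \ref{L:6.8d}. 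Your direct disjointification recursions do not substitute for this. Concretely: in (b) you invoke $\mathfrak b(\kappa)=\kappa^+$ only in the form ``every family of $\le\kappa$ functions is $\le^\ast_\kappa$-bounded'', but that is just $\mathfrak b(\kappa)\ge\kappa^+$, which holds for every regular $\kappa$; so the stated hypothesis does no work in your sketch, and an argument for (b) that uses no hypothesis would, at $\kappa=\aleph_0$, prove in ZFC statements about full countable box products that are known to need extra axioms. In (c), the length-$\kappa^+$ recursion must at each stage find a basic clopen set containing a new point and disjoint from the $\le\kappa$ sets already committed; but the union of $\kappa$ many clopen sets is open and in general not closed in a space that is only $(<\kappa)$-open, so there is no reason such a set exists --- this is precisely the obstruction you flag at limit stages and leave unresolved, and it is what the passage to the $(<\kappa^+)$-open Nabla is designed to circumvent. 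As written, the proposal proves (a) but not (b) or (c).
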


Theorem \ref{big}(b) with 
$\kappa=\aleph_0$ gives the forward direction of Kunen's Theorem and Theorem \ref{big}(c) with $\kappa=\aleph_0$ gives the instance of Miller's Theorem for compact metric spaces.
The proof of Theorem \ref{big} uses Kunen's and Miller's technology of $\nabla$-products, which is reviewed in the Appendix \S\ref{ap:nabla},
with some new elements that we mention here. We also take a moment to recall some facts about Sikorski spaces that will be needed in the proof.

It is rather immediate to obtain Sikorski's theorem \cite{SikorskiDelta}(iv), which states that for any uncountable 
$\kappa$, every regular $(<\kappa)$-open space is 0-dimensional. In fact, more is true, as shown in the following Lemma \ref{obs:0dim}
proven by Arvind Misra in \cite{Misra}. His proof uses the Stone-\v{C}ech compactification, but a direct proof is rather easy. 

\begin{lemma}\label{obs:0dim} Suppose that $\kappa\ge \aleph_1$ and that $X$ is $(<\kappa)$-open. Then $X$ is strongly zero-dimensional, namely every two disjoint zero sets can be separated by clopen sets.
\end{lemma}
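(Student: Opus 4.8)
The plan is to reduce everything to a single clean observation: under the hypothesis $\kappa \ge \aleph_1$, every zero set of $X$ is in fact clopen, after which the separation of disjoint zero sets is immediate. Recall that a zero set is a set of the form $Z = f^{-1}(\{0\})$ for some continuous $f \colon X \to \mathbb{R}$. The first step is to note that $Z$ is a $G_\delta$: indeed $Z = \bigcap_{n \ge 1} f^{-1}((-1/n, 1/n))$, which is a countable intersection of open sets. At the same time $Z$ is closed, being the preimage of the closed set $\{0\}$ under the continuous map $f$ (replacing $f$ by $|f|$ if one prefers to work with a nonnegative function makes no difference to either point).

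Next I would invoke the $(<\kappa)$-openness of $X$. Since $\kappa \ge \aleph_1$, we have $\aleph_0 < \kappa$, so the intersection of any countable family of open sets is open; equivalently, every $G_\delta$ subset of $X$ is open. Applying this to the $G_\delta$ representation of $Z$ above shows that $Z$ is open. Combined with the previous observation that $Z$ is closed, we conclude that $Z$ is clopen. Thus the crucial content is precisely that, in a $(<\kappa)$-open space with uncountable $\kappa$, zero sets are clopen.

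With this in hand the conclusion is essentially free. Given two disjoint zero sets $Z_0$ and $Z_1$, the set $C = Z_0$ is clopen, contains $Z_0$, and satisfies $C \cap Z_1 = Z_0 \cap Z_1 = \emptyset$; equivalently, $C_0 = Z_0$ and $C_1 = X \setminus Z_0$ are complementary clopen sets with $Z_0 \subseteq C_0$ and $Z_1 \subseteq C_1$. Hence the two zero sets are separated by clopen sets, which is exactly strong zero-dimensionality in the sense of the statement.

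I do not expect any serious obstacle here beyond spotting the right reduction: the only real ingredients are the identification of zero sets as $G_\delta$ sets and the elementary but decisive use of $(<\kappa)$-openness to promote $G_\delta$ sets to open sets. The subtlest point, if one wants to call it that, is merely the bookkeeping that a zero set can be written as the displayed countable intersection of open sets; everything else is formal. This is also what makes the remark accurate that Misra's appeal to the Stone--\v{C}ech compactification can be bypassed entirely.
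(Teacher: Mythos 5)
Your proof is correct and rests on exactly the same engine as the paper's: in a $(<\kappa)$-open space with $\kappa\ge\aleph_1$, a closed $G_\delta$ set is automatically clopen. The only (cosmetic) difference is that the paper applies this observation to the level set $f^{-1}([0,\tfrac12])$ of a function separating the two sets, whereas you apply it directly to the zero set itself, which if anything is slightly more economical for the statement as literally phrased.
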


\begin{proof} Since being $(<\kappa)$-open implies being $(<\aleph_1)$-open, it suffices to show the claim in the case of $\kappa=\aleph_1$.
Let $A,B\subseteq X$ be functionally separated subsets of $X$ and let $f:X\to [0,1]$ be a continuous map such that $f[A] = \{0\}$ and $f[B]=\{1\}$. Let $U = f^{-1}[[0,\frac12]]$. Then\footnote{As the previous notation is somewhat cumbersome, from now on we are less careful in making a distinction between the round and the square brackets and will prefer to write things such as $f^{-1}([0,\frac12])$} $U$ is a 
closed $G_\delta$-set, and hence clopen, since $X$ is $\aleph_1$-open.
$\eop_{\ref{obs:0dim}}$
\end{proof}

In a strongly zero-dimensional space, paracompactness coincides with the stronger property of ultraparacompactness, defined in
Definition \ref{ultraparacompact} (see for example Theorem 10 in \cite{vanName} for a proof).
We can conclude that:

\begin{cor}\label{C:PUP} Suppose that $\kappa\ge\aleph_1$. Then any regular $(<\kappa)$-open space $X$ is paracompact iff it is ultraparacompact.
\end{cor}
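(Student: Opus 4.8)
The plan is to dispatch one direction trivially and to give a direct argument for the other, using Lemma \ref{obs:0dim} as the entry point. The implication that ultraparacompactness implies paracompactness is already recorded after Definition \ref{ultraparacompact}, since a disjoint clopen cover is locally finite; so the only content is the converse. Assume $X$ is a regular $(<\kappa)$-open space with $\kappa\ge\aleph_1$ which is paracompact. By Lemma \ref{obs:0dim}, $X$ is strongly zero-dimensional, and being paracompact (hence normal) it carries the additional separation machinery exploited below.

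First I would upgrade the zero-set separation of Lemma \ref{obs:0dim} to \emph{closed}-set separation. In a normal space two disjoint closed sets $A,B$ are functionally separated by Urysohn's lemma, hence are contained in disjoint zero sets; applying strong zero-dimensionality to those zero sets yields a clopen set $C$ with $A\subseteq C\subseteq X\setminus B$. Thus in $X$ any two disjoint closed sets can be separated by clopen sets.

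Next, given an arbitrary open cover $\OO$ of $X$, I would produce a locally finite clopen refinement. By paracompactness choose a locally finite open refinement $\{U_\alpha:\alpha<\mu\}$ of $\OO$. Since this family is point-finite and $X$ is normal, the shrinking lemma gives an open cover $\{W_\alpha:\alpha<\mu\}$ with $\overline{W_\alpha}\subseteq U_\alpha$. Applying the previous paragraph to the disjoint closed sets $\overline{W_\alpha}$ and $X\setminus U_\alpha$ yields a clopen $C_\alpha$ with $\overline{W_\alpha}\subseteq C_\alpha\subseteq U_\alpha$. The family $\{C_\alpha\}$ then covers $X$ (because the $W_\alpha$ do), refines $\OO$ (because $C_\alpha\subseteq U_\alpha$), and is locally finite (it is pointwise dominated by the locally finite family $\{U_\alpha\}$).

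Finally I would disjointify by well-ordering $\mu$ and setting $D_\alpha=C_\alpha\setminus\bigcup_{\beta<\alpha}C_\beta$. The step where local finiteness is essential, and the main obstacle to watch, is that $\bigcup_{\beta<\alpha}C_\beta$ must be clopen: it is open as a union of open sets, and it is \emph{closed} because a locally finite union of closed sets is closed and the subfamily $\{C_\beta:\beta<\alpha\}$ inherits local finiteness from $\{C_\alpha\}$. Hence each $D_\alpha$ is clopen, the $D_\alpha$ are pairwise disjoint by construction, they still cover $X$ (for each $x$ take the least $\alpha$ with $x\in C_\alpha$), and each $D_\alpha\subseteq C_\alpha$ lies in a member of $\OO$. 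Thus $\{D_\alpha\}$ is a disjoint clopen refinement of $\OO$, so $X$ is ultraparacompact. This reproves, in the case at hand, the general fact that paracompact strongly zero-dimensional spaces are ultraparacompact (cf.\ \cite{vanName}).
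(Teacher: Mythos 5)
Your proof is correct and follows the same route as the paper: reduce to strong zero-dimensionality via Lemma \ref{obs:0dim} and then use that a paracompact strongly zero-dimensional space is ultraparacompact. The only difference is that the paper simply cites \cite{vanName} for this last fact, whereas you prove it directly; your Urysohn/shrinking/disjointification argument is a correct self-contained proof of that cited step (the key points --- that disjoint closed sets in the normal space $X$ are completely separated and hence clopen-separated, and that a locally finite union of clopen sets is clopen --- are all in order).
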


This nicely fits with the following lemma, which in particular implies that Sikorski spaces are automatically
paracompact. The lemma was found independently by several people, including Bankston [Lemma 2.1. in \cite{MR0454898}] and Kunen [Lemma 1.3 in \cite{MR514975}]. It also follows as a corollary from Lemma \ref{T:GMK} below.

\begin{lemma}\label{notchtheorem} Suppose that $X$ is regular $(<\kappa)$-open and $(< \kappa^+)$-compact. Then $X$ is paracompact (hence ultraparacompact by Corollary \ref{C:PUP}).
\end{lemma}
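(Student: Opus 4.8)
The plan is to prove the stronger conclusion of ultraparacompactness directly, by producing for an arbitrary open cover a pairwise disjoint clopen refinement; paracompactness then follows at once, in accordance with Corollary~\ref{C:PUP}. Throughout I take $\kappa\ge\aleph_1$, which is the case of interest here; for $\kappa=\aleph_0$ the hypotheses say exactly that $X$ is regular and Lindel\"of, and the conclusion is then the classical fact that such spaces are paracompact.

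First I would reduce to covers by clopen sets. Since $X$ is regular and $(<\kappa)$-open with $\kappa\ge\aleph_1$, Lemma~\ref{obs:0dim} gives that $X$ is strongly zero-dimensional, and in particular has a base of clopen sets. Given an arbitrary open cover $\OO$, for each $x\in X$ I choose $O\in\OO$ with $x\in O$ and then a clopen set $C$ with $x\in C\subseteq O$; the family $\OO'$ of all such $C$ is a clopen cover refining $\OO$. By $(<\kappa^+)$-compactness, $\OO'$ has a subcover of size $\le\kappa$, which I well-order as $\langle C_i:\,i<\mu\rangle$ with $\mu\le\kappa$. The point of the bound $\mu\le\kappa$ is that every proper initial segment $\{j:\,j<i\}$ then has cardinality $|i|<\kappa$.

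The heart of the argument is the disjointification $D_i=C_i\setminus\bigcup_{j<i}C_j$. For each $i<\mu$ the set $\bigcup_{j<i}C_j$ is a union of $<\kappa$ many open sets and so is open; but its complement $\bigcap_{j<i}(X\setminus C_j)$ is an intersection of $<\kappa$ many open sets, hence open by $(<\kappa)$-openness, so $\bigcup_{j<i}C_j$ is also closed. Thus $\bigcup_{j<i}C_j$ is clopen, and therefore $D_i=C_i\cap\bigl(X\setminus\bigcup_{j<i}C_j\bigr)$ is clopen as well. The family $\{D_i:\,i<\mu\}$ is pairwise disjoint (if $i<j$ then $D_j\subseteq X\setminus C_i$ while $D_i\subseteq C_i$), it covers $X$ (for $x\in X$ take the least $i$ with $x\in C_i$), and each $D_i\subseteq C_i$ lies inside a member of $\OO$. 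Hence $\{D_i:\,i<\mu\}$ is the desired disjoint clopen refinement.

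I expect the only real obstacle to be the clopenness of the initial unions $\bigcup_{j<i}C_j$: this is exactly where both hypotheses feed in, namely $(<\kappa)$-openness (to get that the complement of the union is open) together with the fact, guaranteed by $(<\kappa^+)$-compactness, that the well-ordered subcover can be chosen short enough that all proper initial segments have size $<\kappa$. Note also that the passage to clopen covers is essential and not merely convenient: for a merely open well-ordered subcover the sets $\bigcup_{j<i}O_j$ need not be closed, since $(<\kappa)$-openness says nothing about intersections of $<\kappa$ closed sets, and then the difference sets $D_i$ need not be open.
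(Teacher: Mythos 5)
Your proof is correct, and it takes a genuinely different route from the paper. The paper does not prove the lemma directly: it cites Bankston and Kunen and notes that the statement follows from Lemma~\ref{T:GMK}, the point being that $(<\kappa^+)$-compactness yields a subcover of size $\le\kappa$, which trivially satisfies condition (3) of that lemma as a union of $\kappa$ many singleton (hence locally $(<\kappa)$) families; paracompactness then comes out of the chain (3)$\implies$(2)$\implies$(1), which rests on Michael's closure-preserving refinement theorem, and ultraparacompactness is only recovered afterwards via Corollary~\ref{C:PUP}, itself quoted from the literature. You instead build the disjoint clopen refinement by hand: pass to a clopen cover, extract a subcover of size $\le\kappa$, and disjointify along a well-ordering, using $(<\kappa)$-openness to see that each initial union $\bigcup_{j<i}C_j$ has open complement and is therefore clopen. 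This is more elementary and self-contained, gives ultraparacompactness (indeed a \emph{disjoint} clopen refinement) in one stroke without appealing to Michael's theorem or to the external equivalence of paracompactness and ultraparacompactness in strongly zero-dimensional spaces, and it is the same disjointification device the paper uses elsewhere (Theorems~\ref{regular} and \ref{stronglycompactimpliesbparacompact}). Two small remarks. First, for the reduction to clopen covers you should cite the $0$-dimensionality statement (Sikorski's theorem, quoted just before Lemma~\ref{obs:0dim}, or the direct argument: for $x\in O$ use regularity to build a decreasing $\omega$-chain $U_{n+1}\subseteq\overline{U_{n+1}}\subseteq U_n\subseteq O$ and note $\bigcap_n U_n=\bigcap_n\overline{U_n}$ is clopen) rather than Lemma~\ref{obs:0dim} itself, since strong zero-dimensionality as stated there does not formally hand you a clopen base. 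Second, your treatment of $\kappa=\aleph_0$ by reduction to the classical regular-Lindel\"of case is exactly right, and is needed, since Theorem~\ref{big}(a) applies the lemma with $\kappa$ possibly countable while Corollary~\ref{C:PUP} requires $\kappa\ge\aleph_1$.
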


As far as the products of spaces are concerned, the beginning is to handle finite products of $\kappa$-Sikorski spaces. This can be done using a representation theorem from \cite{SikorskiDelta}(xvii), stated as follows.

\begin{theorem}[Sikorski representation theorem]\label{thm:representation} Suppose that $\BB$ is a $(<\kappa)$-complete Boolean algebra. Then the following are equivalent:

\begin{enumerate}
\item\label{cond1} any $(<\kappa)$-complete filter of elements of $\BB$ extends to a $(<\kappa)$-complete ultrafilter.
\item $\BB$ is isomorphic to the field of clopen subsets of a $\kappa$-Sikorski space.
\end{enumerate}
\end{theorem}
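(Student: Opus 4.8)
The plan is to carry out a $(<\kappa)$-complete version of Stone duality. Given the $(<\kappa)$-complete Boolean algebra $\BB$, I would let $X$ be the set of all $(<\kappa)$-complete ultrafilters on $\BB$, topologised by declaring the sets $\hat b=\{U\in X:\,b\in U\}$ (for $b\in\BB$) to be a base. Since $\hat 0=\emptyset$, $\hat 1=X$ and $\widehat{b\wedge c}=\hat b\cap\hat c$, this is a legitimate base, each $\hat b$ being clopen because $X\setminus\hat b=\widehat{-b}$. The direction (2)$\Rightarrow$(1) is the softer one, so I would isolate (1)$\Rightarrow$(2) as the substantive half: condition (1) is precisely what guarantees that $X$ has ``enough'' points, and the task is to convert this into the three defining properties of a $\kappa$-Sikorski space, plus the fact that the clopen algebra of $X$ is exactly $\{\hat b:\,b\in\BB\}$.

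For (1)$\Rightarrow$(2), the key first observation is that every nonzero $b$ lies in some $(<\kappa)$-complete ultrafilter: the principal filter $\{x:\,x\ge b\}$ is a proper $(<\kappa)$-complete filter, so by (1) it extends to one. Applying this to $b\wedge -c$ or $c\wedge -b$ whenever $b\neq c$ gives injectivity of $b\mapsto\hat b$. The map is plainly a Boolean homomorphism, and $(<\kappa)$-openness of $X$ comes directly from $(<\kappa)$-completeness of $\BB$: for fewer than $\kappa$ many $b_i$, the equivalence ``$(\forall i)\,b_i\in U\iff\bigwedge_i b_i\in U$'' (valid since $U$ is $(<\kappa)$-complete) yields $\bigcap_i\hat{b_i}=\widehat{\bigwedge_i b_i}$, and a routine witness-and-meet argument upgrades this to the intersection of fewer than $\kappa$ arbitrary open sets being open. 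Finally $X$ is Hausdorff (distinct ultrafilters are separated by some $\hat b$) and has a clopen base, hence is regular.

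The hard part is $(<\kappa)$-compactness, and this is where condition (1) does its real work, so I expect it to be the main obstacle. I would argue contrapositively: suppose $\{\hat{b_i}:\,i\in I\}$ is a basic cover with no subcover of size $<\kappa$. By injectivity this translates into the statement that every meet of fewer than $\kappa$ of the elements $-b_i$ is nonzero; hence the $(<\kappa)$-complete filter generated by $\{-b_i\}$ is proper (this properness from the $(<\kappa)$-intersection property is the one spot needing care, and is cleanest when $\kappa$ is regular, as in our applications). By (1) it extends to a $(<\kappa)$-complete ultrafilter $U$ containing every $-b_i$, so $U$ lies in no $\hat{b_i}$, contradicting that the family covers $X$; the passage from basic covers to arbitrary open covers is routine. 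Surjectivity onto $\mathrm{Clop}(X)$ then follows: a clopen $C$ is closed in the $(<\kappa)$-compact $X$, hence $(<\kappa)$-compact, so its cover by basic sets has a subcover of size $<\kappa$, and $\bigcup_{i\in J}\hat{b_i}=\widehat{\bigvee_{i\in J}b_i}$ (again using $(<\kappa)$-completeness of $\BB$ and of $U$) exhibits $C$ as some $\hat b$.

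For (2)$\Rightarrow$(1), identify $\BB$ with $\mathrm{Clop}(X)$; note that $(<\kappa)$-openness makes meets and joins of fewer than $\kappa$ clopen sets coincide with their intersections and unions. Given a proper $(<\kappa)$-complete filter $F\subseteq\mathrm{Clop}(X)$, every subfamily of $F$ of size $<\kappa$ has nonempty intersection, since its meet lies in $F$ and is therefore a nonempty clopen set. By $(<\kappa)$-compactness in its closed-set formulation—that a family of closed sets in which every $<\kappa$ subfamily meets has nonempty total intersection—the set $\bigcap F$ is nonempty. Choosing $x\in\bigcap F$, the collection $U=\{C\in\mathrm{Clop}(X):\,x\in C\}$ of clopen neighbourhoods of $x$ is an ultrafilter; it is $(<\kappa)$-complete because $(<\kappa)$-openness keeps intersections of fewer than $\kappa$ clopen sets clopen, and it contains $F$ since $x\in\bigcap F$. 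This is the required $(<\kappa)$-complete ultrafilter extension, completing the equivalence.
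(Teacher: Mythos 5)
The paper does not actually prove this theorem --- it is quoted from Sikorski's paper \cite{SikorskiDelta}(xvii) without proof --- so there is no internal argument to compare against. Your proof is the expected one, namely $(<\kappa)$-complete Stone duality on the space of $(<\kappa)$-complete ultrafilters with the basic clopen topology, and the individual steps all check out: the principal-filter argument showing every nonzero element lies in some point of $X$, the identity $\bigcap_{i}\hat{b_i}=\widehat{\bigwedge_i b_i}$ driving $(<\kappa)$-openness, the identity $\bigcup_{i\in J}\hat{b_i}=\widehat{\bigvee_{i\in J}b_i}$ (for $|J|<\kappa$) driving surjectivity onto the clopen algebra, and the centred-family reformulation of $(<\kappa)$-compactness in the direction (2)$\Rightarrow$(1).

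The one point that needs to be made explicit is exactly the one you flag in passing: in the $(<\kappa)$-compactness argument you need the $(<\kappa)$-complete filter generated by $\{-b_i:\,i\in I\}$ to be proper, and this filter is the upward closure of the $(<\kappa)$-fold meets of generators only when $\kappa$ is regular. When $\kappa$ is singular, closing under $(<\kappa)$-fold meets twice already produces meets of $\kappa$ many generators (group them into $\cf(\kappa)$ blocks whose sizes are cofinal in $\kappa$), so properness requires the nonvanishing of $\kappa$-fold meets, which the absence of a subcover of size $<\kappa$ does not give you; dually, the space you construct is then only guaranteed to be $(<\kappa^+)$-compact rather than $(<\kappa)$-compact. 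So as written your argument proves the theorem for regular $\kappa$, and the singular case would need a separate argument that you do not supply. This is harmless for the paper, since the only use made of the result is Corollary \ref{cor:TubeLemma}, which feeds into Theorem \ref{big} under the hypothesis $\kappa^{<\kappa}=\kappa$, forcing $\kappa$ to be regular; but if you want the statement at its stated level of generality you should either add a regularity hypothesis or address the singular case explicitly.
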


\begin{corollary}\label{cor:TubeLemma} The product of two $\kappa$-Sikorski spaces is $\kappa$-Sikorski.
\end{corollary}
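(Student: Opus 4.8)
The plan is to verify directly the three defining properties of a $\kappa$-Sikorski space (Definition \ref{def:Sikorskispace}) for the product $X\times Y$ of two $\kappa$-Sikorski spaces $X,Y$, taken with the usual product topology: regularity, $(<\kappa)$-openness, and $(<\kappa)$-compactness. The first two are routine, while the real content is $(<\kappa)$-compactness. This cannot follow from the corresponding properties of the factors alone, since $(<\kappa)$-compactness is in general not productive (already $(<\aleph_1)$-compactness, i.e.\ the Lindel\"of property, fails to be productive). The point is that the remaining Sikorski hypotheses are exactly what is needed to make a tube-lemma argument go through, which is why this is the Tube Lemma.

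Regularity of $X\times Y$ is inherited, as a product of regular (Hausdorff) spaces is regular. For $(<\kappa)$-openness I would use that the rectangles $U\times V$ with $U,V$ open form a base: given fewer than $\kappa$ open sets $W_i$ of $X\times Y$ and a point $(x,y)\in\bigcap_i W_i$, choose for each $i$ a rectangle $U_i\times V_i$ with $(x,y)\in U_i\times V_i\subseteq W_i$; then $(\bigcap_i U_i)\times(\bigcap_i V_i)$ is an open neighbourhood of $(x,y)$ contained in $\bigcap_i W_i$, because $\bigcap_i U_i$ and $\bigcap_i V_i$ are open by the $(<\kappa)$-openness of $X$ and $Y$. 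Hence $\bigcap_i W_i$ is open. (By Lemma \ref{obs:0dim}, for $\kappa\ge\aleph_1$, the space is then $0$-dimensional, so clopen rectangles also form a base, though open rectangles will suffice below.)

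For $(<\kappa)$-compactness I would run the classical tube lemma with ``finite'' replaced by ``$<\kappa$''. Let $\mathcal{W}$ be an open cover of $X\times Y$; refining, assume its members are open rectangles. Fix $x\in X$. The slice $\{x\}\times Y$ is homeomorphic to the $(<\kappa)$-compact space $Y$, so it is covered by fewer than $\kappa$ rectangles $U^x_i\times V^x_i$ ($i<\mu_x$, $\mu_x<\kappa$) from $\mathcal{W}$, each with $x\in U^x_i$. Put $U^x=\bigcap_{i<\mu_x} U^x_i$: the $(<\kappa)$-openness of $X$ guarantees that $U^x$ is an open neighbourhood of $x$, and $U^x\times Y\subseteq\bigcup_{i<\mu_x}(U^x_i\times V^x_i)$. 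This is the tube. Now $\{U^x:x\in X\}$ is an open cover of $X$, so by the $(<\kappa)$-compactness of $X$ there are fewer than $\kappa$ points $x_j$ with $\bigcup_j U^{x_j}=X$. The corresponding tubes cover $X\times Y$, and the union of their covering families is a subfamily of $\mathcal{W}$ of size $\sum_j\mu_{x_j}$.

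The main obstacle is precisely the two places where the Sikorski structure intervenes. Forming $U^x$ as an open set is where $(<\kappa)$-openness of the factor, rather than mere $(<\kappa)$-compactness, is indispensable: without it $U^x$ would be only an intersection of fewer than $\kappa$ open sets and need not be open, and the tube collapses; this is the structural reason the Lindel\"of counterexamples do not apply here. The final bound $\sum_j\mu_{x_j}<\kappa$ (each $\mu_{x_j}<\kappa$ and fewer than $\kappa$ indices $j$) requires $\kappa$ to be regular, which holds in the ambient setting $\kappa=\kappa^{<\kappa}$. Alternatively, and this is the route signalled just before the statement, one can argue algebraically via the representation theorem (Theorem \ref{thm:representation}): pass to the $(<\kappa)$-complete algebras of clopen sets of $X$ and $Y$, check that the $(<\kappa)$-complete algebra of clopen sets of $X\times Y$ again satisfies the filter-extension condition \eqref{cond1}, and read off that $X\times Y$ is $\kappa$-Sikorski. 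The tube-lemma computation above is exactly the combinatorial heart of that verification.
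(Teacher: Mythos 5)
Your argument is correct in the regular case and takes a genuinely different route from the paper's. The paper proves Corollary \ref{cor:TubeLemma} algebraically, via the Sikorski representation theorem (Theorem \ref{thm:representation}): it passes to the clopen algebras, and verifies the filter-extension condition for the product algebra by projecting a $(<\kappa)$-complete filter to the two coordinates, extending each projection to a $(<\kappa)$-complete ultrafilter, and checking that the product of these two ultrafilters is a $(<\kappa)$-complete ultrafilter extending the original filter; no tube computation occurs there. Your direct topological verification of $(<\kappa)$-compactness is instead essentially the content of the paper's own Lemma \ref{TubeL} and Observation \ref{P:M2.2d}, which the paper records right after the Corollary precisely to note that the rectangle/tube argument preserves ``$(<\kappa)$-compact and $(<\kappa)$-open'' under finite products even without regularity of the spaces. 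So your route is more elementary and self-contained, and it correctly isolates $(<\kappa)$-openness as the ingredient that makes the tube $U^x=\bigcap_{i<\mu_x}U^x_i$ open. The one point where your route is strictly weaker than the paper's is the final count $\sum_j\mu_{x_j}<\kappa$: as you say this needs $\kappa$ regular, whereas the Corollary is stated for arbitrary $\kappa$ and the Boolean-algebraic proof uses no such hypothesis. This is repairable rather than fatal: if $\kappa$ is singular with $\cf(\kappa)=\theta$ and $\langle\kappa_i:\,i<\theta\rangle$ is cofinal in $\kappa$, then a $(<\kappa)$-open, $(<\kappa)$-compact space is already $(<\lambda^+)$-compact for a single $\lambda<\kappa$ --- otherwise choose for each $i<\theta$ an open cover $\OO_i$ with no subcover of size $\le\kappa_i$ and form the cover by all intersections $\bigcap_{i<\theta}O_i$ with $O_i\in\OO_i$ (open by $(<\kappa)$-openness), which then has no subcover of size $<\kappa$. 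With this uniform bound on the $\mu_x$ and on the number of tubes, the total count stays below $\kappa$ and your argument covers the singular case as well.
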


\begin{proof} Let $X_0$ and $X_1$ be two $\kappa$-Sikorski spaces and let $\BB_l$ be the Boolean algebra of clopen subsets of $X_l$ for $l<2$. Hence each
$\BB$ is $(<\kappa)$-complete and satisfies the condition \ref{cond1}. of Theorem \ref{thm:representation}.
The algebra of clopen sets of $X_0\times X_1$ is exactly the algebra $\BB_0\times \BB_1$, hence by the Stone representation theorem, $X_0\times X_1$ is exactly the unique up to homeomorphism Stone space of that algebra.
Since $\BB_0\times \BB_1$ is evidently $(<\kappa)$-complete, to prove that $X_0\times X_1$ is $\kappa$-Sikorski, it suffices to prove that every 
$(<\kappa)$-complete filter of elements of $\BB_0\times \BB_1$ extends to a $(<\kappa)$-complete ultrafilter.

Let $\FF$ be a $(<\kappa)$-complete filter of elements of $\BB_0\times \BB_1$ and for $l<2$ let $\FF_l$
be the projection of $\FF$ to $\BB_l$, that is 
\[
\FF_0=\{a_0\in \BB_0:\, (\exists a_1\in \BB_1)(a_0, a_1)\in \FF\},
\]
and similarly for $\FF_1$. It is easy to check that each $\FF_l$ is a $(<\kappa)$-complete filter on $\BB_l$, and
hence can be extended to a $(<\kappa)$-complete ultrafilter $\UU_l$ on $\BB_l$. Clearly, $\UU_0\times \UU_1$ is
a $(<\kappa)$-complete filter on $\BB$ and it extends $\FF$. We only need to prove that it is an ultrafilter. 

Suppose to the contrary, that $\GG$ is a filter on $\BB$ which properly extends $\UU_0\times \UU_1$ and let 
$(a_0, a_1)\in \GG\setminus (\UU_0\times \UU_1)$. Then for some $l<2$, $a_l\notin \UU_l$, say $a_0\notin\UU_0$.
Therefore $-a_0\in \UU_0$ and so $a_0, -a_0$ are both in the projection $\GG_0$ of $\GG$ to $\BB_0$. This is 
a contradiction since $\GG$ being a filter implies that $\GG_0$ is a filter.
$\eop_{\ref{cor:TubeLemma}}$
\end{proof}

In fact, even without the regularity, being $(<\kappa)$-open $(<\kappa)$-compact is preserved by finite products. This is because 
of the following version of the `Tube Lemma' of Tikhonov:

\begin{lemma}\label{TubeL} If $C$ is a ($<\kappa$)-compact subspace of a Hausdorff $(<\kappa)$-open space $X$ 
and $y$ a point of a space $Y$, then for every open set $W\subseteq X\times Y$ containing 
$C\times\{y\}$, there exists open sets $U\subseteq X$ and $V\subseteq Y$ such that 
$C\times\{y\} \subseteq U\times V \subseteq W$.
\end{lemma}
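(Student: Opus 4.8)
The plan is to imitate the classical proof of Tikhonov's Tube Lemma, making exactly two adjustments for the $(<\kappa)$ setting: where that proof extracts a \emph{finite} subcover of $C$ and then takes a \emph{finite} intersection of neighbourhoods of $y$, I would instead extract a subcover of size $<\kappa$ and take an intersection of $<\kappa$ many open sets. The first adjustment is legitimate because $C$ is $(<\kappa)$-compact, and the second produces an open set precisely because of $(<\kappa)$-openness. This is the same scheme already used in the proof of Lemma~\ref{lem:compxpara}, where we ``argued as in the proof of the Tube Lemma.''

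In detail, fix an open $W \subseteq X \times Y$ with $C \times \{y\} \subseteq W$. For each $x \in C$ we have $(x,y) \in W$, so I would choose a basic open box $U_x \times V_x$ with $(x,y) \in U_x \times V_x \subseteq W$, where $U_x \in \tau(X)$ and $V_x \in \tau(Y)$. Then $\{U_x : x \in C\}$ is an open cover of $C$, and by $(<\kappa)$-compactness there is $D \subseteq C$ with $|D| < \kappa$ such that $\{U_x : x \in D\}$ still covers $C$. Put
\[
U = \bigcup_{x \in D} U_x, \qquad V = \bigcap_{x \in D} V_x .
\]

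The verification of $C \times \{y\} \subseteq U \times V \subseteq W$ is then routine and identical to the classical case. Indeed $U$ is open as a union of open sets, $C \subseteq U$ by the choice of $D$, and $y \in V$ since $y \in V_x$ for every $x \in D$; hence $C \times \{y\} \subseteq U \times V$. For the reverse inclusion, if $(a,b) \in U \times V$ then $a \in U_x$ for some $x \in D$ while $b \in V \subseteq V_x$, so $(a,b) \in U_x \times V_x \subseteq W$. None of this uses anything beyond the product topology.

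The one step that is not automatic — and the step I expect to be the crux — is that $V$ is open. In Tikhonov's setting $D$ is finite, so $V$ is a finite intersection and openness is free; here $|D|$ may be infinite, so $V$ is an intersection of $<\kappa$ many open sets, and one must invoke $(<\kappa)$-openness to conclude that $V$ is open. I would point out that the sets $V_x$ live in the factor $Y$, so it is the $(<\kappa)$-openness of the factor carrying the thickened neighbourhood $V$ that actually does the work; this is harmless in the paper, where the lemma is applied to $\kappa$-Sikorski factors, all of which are $(<\kappa)$-open. Once $V$ is known to be open, $U \times V$ is the required tube and the proof is finished. (Note that the Hausdorffness of $X$ plays no role in this particular argument.)
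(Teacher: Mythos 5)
Your argument is the standard Tube Lemma proof with precisely the two cardinality adjustments you describe, and it is exactly what the paper has in mind: the paper gives no proof of Lemma~\ref{TubeL} beyond the remark that it ``is proved along the same lines as that for compact spaces'' with a pointer to Engelking, so your write-up is in effect the proof the paper omits. The construction of $U=\bigcup_{x\in D}U_x$ and $V=\bigcap_{x\in D}V_x$ and the verification that $C\times\{y\}\subseteq U\times V\subseteq W$ are correct.

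The point you raise almost in passing --- that the openness of $V$ uses the $(<\kappa)$-openness of $Y$, whereas the lemma hypothesises it only for $X$ --- is not a cosmetic remark, and you should not wave it away as ``harmless''. It is a genuine mismatch between your proof and the stated lemma, and no alternative proof can close it, because the statement as literally written is false. Take $\kappa=\aleph_1$, let $X=C=\omega$ with the discrete topology (Hausdorff, $(<\aleph_1)$-open, and $(<\aleph_1)$-compact since it is countable --- indeed an $\aleph_1$-Sikorski space), let $Y=\mathbb{R}$ with its usual topology, $y=0$, and let $W=\bigcup_{n<\omega}\{n\}\times(-2^{-n},2^{-n})$. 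Then $W$ is open and contains $C\times\{0\}$, but any tube $U\times V$ with $C\times\{0\}\subseteq U\times V\subseteq W$ would force $V\subseteq\bigcap_{n<\omega}(-2^{-n},2^{-n})=\{0\}$, so no open $V$ exists. The hypothesis that is actually needed is that $Y$, the factor carrying the point $y$ and the intersected neighbourhoods, be $(<\kappa)$-open; the Hausdorffness and $(<\kappa)$-openness of $X$ are, as you correctly note, not used at all. The same correction propagates to Observation~\ref{P:M2.1d} (the same example shows $\pr_Y:\omega\times\mathbb{R}\to\mathbb{R}$ is not a closed map), but every place the paper actually invokes these facts --- Observation~\ref{P:M2.2d}, Claim~\ref{cl:closedmap} and Lemma~\ref{C:6.12d} --- projects onto a box product of $(<\kappa)$-open spaces, which is itself $(<\kappa)$-open, so nothing downstream is damaged. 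In short: your proof is the right one, but it proves the corrected statement; it is the statement that should be amended, not the argument.
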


The Lemma is proved along the same lines as that for compact spaces, see for example \cite[Lemma 3.1.15]{E}.
From this lemma one can easily obtain the following two results:

\begin{observation}\label{P:M2.1d} If a space $X$ is a Hausdorff, ($<\kappa$)-compact $(<\kappa)$-open space then for any space $Y$ the projection $\pr_Y: X\times Y \to Y$ is a closed map.
\end{observation}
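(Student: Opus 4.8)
The plan is to prove that $\pr_Y$ is closed by the classical argument that projection along a compact factor is a closed map, transported to the $(<\kappa)$-setting by invoking the Tube Lemma of Lemma \ref{TubeL}. Concretely, I would show that for every closed set $F \subseteq X \times Y$ the image $\pr_Y(F)$ is closed in $Y$, and I would do this by checking that its complement is open.

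First I would fix a point $y \in Y \setminus \pr_Y(F)$. By the definition of the projection, this says precisely that $(x,y) \notin F$ for every $x \in X$; equivalently, the open set $W := (X \times Y) \setminus F$ contains the entire slice $X \times \{y\}$.

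Next I would apply Lemma \ref{TubeL} with the $(<\kappa)$-compact subspace $C := X$ (the whole space, which is $(<\kappa)$-compact by hypothesis), the point $y$, and the open set $W$. This yields open sets $U \subseteq X$ and $V \subseteq Y$ with $X \times \{y\} \subseteq U \times V \subseteq W$. Since $X \times \{y\} \subseteq U \times V$ forces $U = X$, we get $X \times V \subseteq W = (X \times Y) \setminus F$, so $(X \times V) \cap F = \emptyset$ and hence $V \cap \pr_Y(F) = \emptyset$. Thus $V$ is an open neighbourhood of $y$ disjoint from $\pr_Y(F)$, which places $y$ in the interior of the complement of $\pr_Y(F)$. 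As $y$ was an arbitrary point outside $\pr_Y(F)$, the set $Y \setminus \pr_Y(F)$ is open, so $\pr_Y(F)$ is closed, and $\pr_Y$ is a closed map.

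The substance of the argument is entirely delegated to Lemma \ref{TubeL}, so I do not expect a serious obstacle. The one point requiring care is the hypothesis check for the Tube Lemma: I need $X$ itself (rather than a proper subspace) to be $(<\kappa)$-compact, as well as Hausdorff and $(<\kappa)$-open, all of which are assumed. The only genuine subtlety is recognising that taking $C$ to be all of $X$ collapses the tube $U \times V$ down to $X \times V$, which is exactly what is needed to exhibit an open neighbourhood of $y$ avoiding $\pr_Y(F)$.
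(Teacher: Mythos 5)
Your proof is correct and is precisely the argument the paper intends: the paper derives Observation \ref{P:M2.1d} directly from the Tube Lemma (Lemma \ref{TubeL}) in exactly this way, applying it to the slice $X\times\{y\}$ for $y$ outside the image of the closed set. The hypothesis check (taking $C=X$, which is $(<\kappa)$-compact, Hausdorff and $(<\kappa)$-open by assumption) is the only point of substance, and you handle it correctly.
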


\begin{observation}\label{P:M2.2d} The product of two ($<\kappa$)-compact $(<\kappa)$-open spaces is ($<\kappa$)-compact $(<\kappa)$-open.
\end{observation}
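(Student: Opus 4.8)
The plan is to verify the two defining properties separately, where $X,Y$ denote the two given $(<\kappa)$-compact $(<\kappa)$-open Hausdorff spaces: first that $X\times Y$ is $(<\kappa)$-open, and then the substantive half, that it is $(<\kappa)$-compact.

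For $(<\kappa)$-openness I would argue directly from the product base. Given open sets $\{W_i:i\in I\}$ in $X\times Y$ with $|I|<\kappa$, fix $(x,y)\in\bigcap_i W_i$ and for each $i$ choose a basic box $U_i\times V_i$ with $(x,y)\in U_i\times V_i\subseteq W_i$. Then $(\bigcap_i U_i)\times(\bigcap_i V_i)=\bigcap_i(U_i\times V_i)$ contains $(x,y)$ and lies inside $\bigcap_i W_i$, and $\bigcap_i U_i$, $\bigcap_i V_i$ are open by the $(<\kappa)$-openness of $X$ and $Y$ (since $|I|<\kappa$); hence this is a basic open neighbourhood of $(x,y)$ contained in $\bigcap_i W_i$, so $\bigcap_i W_i$ is open. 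This is routine and holds for every infinite $\kappa$.

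For $(<\kappa)$-compactness I would use the classical slicing proof that a product of two compact spaces is compact, replacing ``finite'' by ``$<\kappa$'' and invoking the Tube Lemma (Lemma \ref{TubeL}) in place of its finite version. Let $\mathcal{A}$ be an open cover of $X\times Y$. For each $y\in Y$ the slice $X\times\{y\}$ is homeomorphic to $X$, hence $(<\kappa)$-compact, so it is covered by some $\mathcal{A}_y\subseteq\mathcal{A}$ with $|\mathcal{A}_y|<\kappa$; put $W_y=\bigcup\mathcal{A}_y$. Applying Lemma \ref{TubeL} with $C=X$ (a $(<\kappa)$-compact subspace of the Hausdorff $(<\kappa)$-open space $X$) to the open set $W_y\supseteq X\times\{y\}$ yields an open $V_y\ni y$ with $X\times V_y\subseteq W_y$. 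The family $\{V_y:y\in Y\}$ covers $Y$, so by $(<\kappa)$-compactness of $Y$ there is $J$ with $|J|<\kappa$ and $Y=\bigcup_{j\in J}V_{y_j}$, and then $\bigcup_{j\in J}\mathcal{A}_{y_j}$ is a subcover of $\mathcal{A}$: any $(x,y)$ has $y\in V_{y_j}$ for some $j$, whence $(x,y)\in X\times V_{y_j}\subseteq W_{y_j}=\bigcup\mathcal{A}_{y_j}$.

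The one real obstacle is bounding the size of this subcover below $\kappa$, i.e. showing $|\bigcup_{j\in J}\mathcal{A}_{y_j}|\le\sum_{j\in J}|\mathcal{A}_{y_j}|<\kappa$. When $\kappa$ is regular this is immediate, since $|J|<\kappa$ and $\sup_{j\in J}|\mathcal{A}_{y_j}|<\kappa$ (a supremum of fewer than $\kappa$ cardinals each below $\kappa$), so the total is below $\kappa$. When $\kappa$ is singular the supremum can in principle reach $\kappa$, and this is the point needing care. I would resolve it by first proving the auxiliary fact that a $(<\kappa)$-open $(<\kappa)$-compact space $Z$ with $\kappa$ singular is already $(<\kappa')$-compact for some regular $\kappa'<\kappa$: fixing a cofinal sequence $\kappa=\sup_i\kappa_i$ and covers $\mathcal{U}_i$ witnessing that $Z$ is not $(<\kappa_i^+)$-compact, the family of all intersections $\{\bigcap_{i<\cf(\kappa)}U_i:U_i\in\mathcal{U}_i\}$ (open, since $Z$ is $(<\kappa)$-open and $\cf(\kappa)<\kappa$) is an open cover of $Z$; any subcover of it of size $\lambda<\kappa$ projects, in a coordinate $i_0$ with $\lambda\le\kappa_{i_0}$, to a subcover of $\mathcal{U}_{i_0}$ of size $\le\kappa_{i_0}$, contradicting the choice of $\mathcal{U}_{i_0}$. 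With this reduction, for singular $\kappa$ both $X$ and $Y$ are $(<\kappa')$-compact and $(<\kappa')$-open for a common regular $\kappa'<\kappa$, so the regular case yields $(<\kappa')$-compactness, hence $(<\kappa)$-compactness, of the product.
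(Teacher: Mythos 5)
Your argument is correct, and its core --- the classical slicing argument via the Tube Lemma (Lemma \ref{TubeL}) with ``finite'' replaced by ``$<\kappa$'', plus the routine base computation for $(<\kappa)$-openness --- is exactly what the paper intends: the paper writes no proof, saying only that the observation follows easily from Lemma \ref{TubeL}. Where you genuinely go beyond the paper is the cardinality count at the end. For regular $\kappa$ the bound $|\bigcup_{j\in J}\mathcal{A}_{y_j}|\le |J|\cdot\sup_{j\in J}|\mathcal{A}_{y_j}|<\kappa$ is immediate, but for singular $\kappa$ this supremum can indeed reach $\kappa$, and your auxiliary reduction --- that a $(<\kappa)$-open, $(<\kappa)$-compact space with $\kappa$ singular is already $(<\kappa')$-compact for some regular $\kappa'<\kappa$ --- is a correct and genuinely needed patch: the intersections $\bigcap_{i<\cf(\kappa)}U_i$ are open because $\cf(\kappa)<\kappa$ and the space is $(<\kappa)$-open, and a subcover of size $\lambda<\kappa$ of the intersected cover projects in a coordinate $i_0$ with $\kappa_{i_0}\ge\lambda$ onto a subcover of $\mathcal{U}_{i_0}$ of size $\le\kappa_{i_0}$, contradicting the choice of $\mathcal{U}_{i_0}$. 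This subtlety is invisible in the paper's one-line derivation, and it is worth noting that your fix uses the $(<\kappa)$-openness hypothesis in an essential way; so your write-up both matches the intended proof and closes a gap that the word ``easily'' hides in the singular case.
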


In the following we use the Nabla spaces. For definitions and basic properties see Definition \ref{nabla} and \S\ref{subsec:tau} in Appendix \ref{ap:nabla}.

\begin{lemma}\label{C:6.12d} If for every 
$\alpha<\kappa$ the space $X_\alpha$ is a ($<\kappa$)-compact and $(<\kappa)$-open space, then the quotient map $q: \square_{\alpha<\kappa} X_\alpha \to 
\nabla_{\alpha<\kappa} X_\alpha$ is closed.
\end{lemma}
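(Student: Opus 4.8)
The plan is to use the fact that $q$ is a quotient map, so $q$ is closed precisely when the saturation $q^{-1}[q[C]]$ of every closed set $C\subseteq\square_{\alpha<\kappa}X_\alpha$ is again closed. Writing $x\sim y$ for the equivalence relation defining $\nabla$ (two points being identified when the set of coordinates on which they differ lies in the relevant ideal), the saturation $\mathrm{sat}(C)=q^{-1}[q[C]]$ is the set of all $y$ that agree with some $x\in C$ off a small set of coordinates. I therefore fix a closed $C$ and a point $z\notin\mathrm{sat}(C)$, and aim to produce a basic open box $W=\prod_{\alpha<\kappa}U_\alpha$ with $z_\alpha\in U_\alpha$ and $W\cap\mathrm{sat}(C)=\emptyset$. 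A direct computation shows that $W\cap\mathrm{sat}(C)=\emptyset$ is equivalent to the statement that every $x\in C$ \emph{escapes} $W$ on a large set of coordinates, i.e. that $\{\alpha<\kappa:\,x_\alpha\notin U_\alpha\}$ is not small; the whole problem is thus to choose the opens $U_\alpha\ni z_\alpha$ so as to force simultaneous large escape of every member of $C$.

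The two hypotheses enter as follows. First, since each $X_\alpha$ is Hausdorff, $(<\kappa)$-compact and $(<\kappa)$-open, the Tube Lemma \ref{TubeL} together with Observation \ref{P:M2.1d} shows that forgetting any single coordinate is a closed projection, and Observation \ref{P:M2.2d} extends this to any finite set of coordinates. Hence for each finite $T\subseteq\kappa$ the projection $\pr_{\kappa\setminus T}$ forgetting the $T$-coordinates is closed, so $\pr_{\kappa\setminus T}[C]$ is closed; since $z$ is $\sim$-inequivalent to every point of $C$, the projection of $z$ lies off it, and regularity yields coordinatewise open sets $V^T_\alpha\ni z_\alpha$ whose product over $\kappa\setminus T$ misses $\pr_{\kappa\setminus T}[C]$. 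Second, I assemble such partial data, obtained along a transfinite system of stages running through $\kappa$, into a single box by setting $U_\alpha$ to be the intersection of all the coordinate-opens contributed to $\alpha$ by the stages below it. Arranging the recursion so that at most $<\kappa$ stages ever touch a fixed coordinate, each $U_\alpha$ becomes an intersection of fewer than $\kappa$ open sets and is open by the $(<\kappa)$-open property (Definition \ref{def:kappaopen}); this is the one step where $(<\kappa)$-openness is indispensable.

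It then remains to verify that this single box $W=\prod_\alpha U_\alpha$ defeats every equivalence class at once. The bridge from the finite projections above to the (possibly infinite but small) difference sets allowed by $\sim$ is exactly the content of the diagonalisation: the stages must be organised so that, for every $x\in C$, the coordinates on which $x$ escapes the successive tubes accumulate to a set that is not small, while no single coordinate collects $\kappa$-many constraints. Carrying out this bookkeeping is the technical heart of the proof and its main obstacle; it is the transfinite analogue of Kunen's tail-box diagonalisation for the countable box product, with the linear tails of $\omega$ replaced by the stage system on $\kappa$ and the finite intersections there replaced by the $(<\kappa)$-intersections licensed by $(<\kappa)$-openness. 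An equivalent route I would keep in reserve is to argue with nets or $(<\kappa)$-complete filters: a net in $\mathrm{sat}(C)$ converging to $z$ carries witnesses in $C$, and $(<\kappa)$-compactness of the factors produces a cluster point of those witnesses lying in $C$ and $\sim$-equivalent to $z$, contradicting $z\notin\mathrm{sat}(C)$.
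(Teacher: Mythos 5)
Your reduction is right --- $q$ is closed iff the saturation $q^{-1}q(K)$ of every closed $K$ is closed, and a basic box $W\ni z$ misses $q^{-1}q(K)$ iff every $x\in K$ escapes $W$ on a set of at least $\kappa$ coordinates --- but the proof stops exactly where it would have to start. You yourself flag the ``bookkeeping'' of the transfinite diagonalisation as the technical heart and its main obstacle, and you do not carry it out; more importantly, the ingredients you have assembled are not sufficient to carry it out. The specific problem: you only establish that projections forgetting \emph{finite} sets of coordinates are closed (via the Tube Lemma \ref{TubeL} and Observations \ref{P:M2.1d}, \ref{P:M2.2d}). A tube over a finite $T$ only guarantees that each $x\in K$ escapes at one coordinate outside $T$, so by running over all finite $T$ you can at best force every $x\in K$ to escape on an \emph{infinite} set of coordinates; for uncountable $\kappa$ this falls far short of the required size $\kappa$. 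What is actually needed is that $\pr_{\kappa\setminus A}$ is a closed map for \emph{every} $A\in[\kappa]^{<\kappa}$, including infinite $A$; this is the paper's Claim \ref{cl:closedmap}, which has its own transfinite induction on $|A|$ (writing $A=\bigcup_{\gamma}F_\gamma$ and representing $\prod_{\beta\in A}\{z_\beta\}\times\pr_{\kappa\setminus A}(K)$ as an intersection of closed sets $K_\gamma$). Your proposal contains no substitute for this step.

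Once that is in place, the paper's route is cleaner than a pointwise separation argument and worth comparing with your plan: since $\kappa$ is regular, a difference set of size $<\kappa$ is bounded, so $q^{-1}q(K)=\bigcup_{\alpha<\kappa}\tau_\alpha^{-1}\tau_\alpha(K)$, where $\tau_\alpha$ forgets the coordinates below $\alpha$. Each $\tau_\alpha(K)$ is closed by Claim \ref{cl:closedmap}, and the union of the pullbacks is closed by the complemented form of Lemma \ref{L:6.10d} --- whose proof is precisely the diagonalisation you were reaching for (stage $\alpha$ constrains only coordinates $\geq\alpha$, so coordinate $\beta$ receives only $|\beta|+1<\kappa$ constraints, and $(<\kappa)$-openness keeps each coordinate set open), organised along tails rather than finite supports. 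Your fallback via nets and cluster points does not rescue the argument: $(<\kappa)$-compactness of the individual factors does not produce cluster points of nets in the box product, and nothing in that sketch would force the putative cluster point to be $\sim$-equivalent to $z$.
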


\begin{proof} We need to show that whenever $K$ is closed in $X=\square_{\alpha<\kappa} X_\alpha$, so is $q(K)$ in $\nabla_{\alpha<\kappa} X_\alpha$.
The latter, by the definition of the
quotient topology, happens exactly when $q^{-1}q(K)$ is closed in $X$. We have
\[
q^{-1}q(K)=\bigcup_{\alpha < \kappa} \tau_\alpha^{-1}\tau_\alpha (K).
\]
Hence, if we show that every
$\tau_\alpha (K)$ is closed in $\bigbox_{\alpha\leq\beta <\kappa} X_\beta$, then by Lemma 
\ref{L:6.10d}, the subspace $q^{-1}q(K)$ is closed. It suffices to show the following claim.

\begin{claim}\label{cl:closedmap} For every subset $A\subseteq \kappa$ with $|A|<\kappa$, the natural projection
$\pr_{\kappa\setminus A}: \bigbox_{\beta<\kappa} X_\beta \to \bigbox_{\beta\in\kappa\setminus A} X_\beta$ is a closed map.
\end{claim}

\begin{proof} The proof is by induction on $|A|$.
If $A$ is finite then the fact that $\pr_{\kappa\setminus A}$ is closed follows from Proposition \ref{P:M2.1d}.

Now assume that $A$ is infinite and that $\pr_{\kappa\setminus B} (K)$ is closed in 
$\bigbox_{\beta\in\kappa\setminus B} X_\beta$ for every $B\subseteq A$ with $|B| < |A|$. Let $\langle F_\gamma:\,\gamma< |A|\rangle$ be an increasing sequence of sets with $|F_\gamma|<|A|$ and such that $\bigcup_{\gamma< |A|} F_\gamma = A$. By assumption, $\pr_{\kappa\setminus F_\gamma}$ is a closed map and thus, $\pr_{\kappa\setminus F_\gamma}(K)$ is a closed set in $\bigbox_{\beta\in\kappa\setminus F_\gamma} X_\beta$, for every $\gamma<|A|$.

Choose an arbitrary point $z\in X$ and let
$K_\gamma = \prod_{\beta\in F_\gamma}\{z_\beta\} \times \pr_{\kappa\setminus F_\gamma}(K)$, which is closed in 
$X$. Then $\bigcap_{\gamma<|A|}K_\gamma =  
\prod_{\beta\in A}\{z_\beta\} \times \pr_{\kappa\setminus A} (K)$ is also closed in 
$X$. Hence, $\pr_{\kappa\setminus A} (K)$ is closed in 
$\bigbox_{\beta \in\kappa\setminus A} X_\beta$, as required.
$\eop_{\ref{cl:closedmap}}$
$\eop_{\ref{C:6.12d}}$
\end{proof}
\end{proof}

The main interest of Lemma \ref{C:6.12d} is that under certain circumstances it allows us to lift the paracompactness of the Nabla space to the
whole box product. This is done with the help of the following Lemma \ref{L:6.8d}. First we need a theorem that for $\kappa=\aleph_0$ was proven by Ernst Michael as Theorem 1 in \cite{MR87079}. The proof was extended for 
$\kappa=\aleph_1$ by Misra as Theorem 4.3 in \cite{Misra}. Slight modifications give the case we need.

\begin{defn} Let $X$ be a space. 
\begin{itemize}
\item A family $\U$ of subsets of $X$ is said to be {\em locally ($<\kappa$)} if each point $x\in X$ has a neighbourhood $O$ such that $|\{ U\in\U:\,O\cap U\neq\emptyset\}|<\kappa$.
\item $X$ is said to be {\em para-($<\kappa$)-compact}, if every open cover of $X$ has a locally ($<\kappa$) open refinement.
\end{itemize}
\end{defn}

\begin{lemma}\label{T:GMK} Let $\kappa$ be an infinite regular cardinal. For a regular ($<\kappa$)-open space $X$ the following are equivalent:
\begin{enumerate}[{\rm (1)}]
\item $X$ is paracompact,
\item $X$ is para-($<\kappa$)-compact,
\item every open cover of $X$ has a refinement of the form $\bigcup_{\alpha < \kappa}\U_\alpha$, 
where each $\U_\alpha$ is a locally ($<\kappa$)-collection of open sets.
\end{enumerate}
\end{lemma}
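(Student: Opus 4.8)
The plan is to prove the cycle $(1)\Rightarrow(2)\Rightarrow(3)\Rightarrow(1)$, where the first two implications are immediate and all the content sits in $(3)\Rightarrow(1)$. For $(1)\Rightarrow(2)$ a locally finite family is \emph{a fortiori} locally $(<\kappa)$, since $\kappa$ is infinite, so every paracompact space is para-$(<\kappa)$-compact. For $(2)\Rightarrow(3)$ a single locally $(<\kappa)$ open refinement $\U$ is already of the required form, with $\U_0=\U$ and $\U_\alpha=\emptyset$ for $\alpha>0$. So I would concentrate entirely on $(3)\Rightarrow(1)$, which I would obtain by transcribing Michael's proof \cite{MR87079} that a $\sigma$-locally finite open refinement yields paracompactness, replacing ``finite'' by ``$(<\kappa)$'' and the countable union by the $\kappa$-indexed one, as done for $\aleph_1$ by Misra \cite{Misra}.

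Given an open cover $\OO$ and a refinement $\bigcup_{\alpha<\kappa}\VV_\alpha$ as in (3), the first step forms, for $V\in\VV_\alpha$, the set $E_V=V\setminus\bigcup_{\beta<\alpha}\bigcup\VV_\beta$; then $\{E_V\}$ covers $X$ and refines $\OO$. To see that it is locally $(<\kappa)$ at a point $x$, let $\alpha$ be least with $x\in\bigcup\VV_\alpha$: no $E_V$ from a level above $\alpha$ meets a neighbourhood of $x$ contained in $\bigcup\VV_\alpha$, and for the $<\kappa$ remaining levels $\beta\le\alpha$ one intersects the neighbourhoods witnessing the local $(<\kappa)$-ness of each $\VV_\beta$ --- a $(<\kappa)$-fold intersection of open sets, which is open by Definition \ref{def:kappaopen} --- the resulting count staying $<\kappa$ because $\kappa$ is regular. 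The members $E_V$ are only locally closed, so next, using the regularity of $X$, one refines $\OO$ so that closures refine it and takes closures to obtain a locally $(<\kappa)$ \emph{closed} refinement $\mathcal F$; finally one runs Michael's construction $U_F=O(F)\setminus\bigcup\{D:\,D\cap F=\emptyset\}$ against a second closed refinement $\mathcal D$ subordinate to the sets meeting $<\kappa$ members of $\mathcal F$. Here the subtracted set is a locally $(<\kappa)$ union of closed sets, hence closed by the de Morgan dual of Definition \ref{def:kappaopen}, so each $U_F$ is open, and the same regularity-of-$\kappa$ count makes $\{U_F\}$ locally $(<\kappa)$. This already establishes $(3)\Rightarrow(2)$.

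It remains to upgrade a locally $(<\kappa)$ open refinement to a locally \emph{finite} one. When $\kappa=\aleph_0$ there is nothing to do, since ``locally $(<\aleph_0)$'' means ``locally finite,'' and this recovers Michael's theorem verbatim. For $\kappa\ge\aleph_1$ I would instead aim directly at ultraparacompactness, which by Lemma \ref{obs:0dim} and Corollary \ref{C:PUP} is equivalent to paracompactness for our $X$. The mechanism is the structural fact that in a $(<\kappa)$-open space a $(<\kappa)$-indexed union of clopen sets is again clopen (open as a union, closed by $(<\kappa)$-openness). Thus, once the refinement is realized by clopen sets $\{V_i\}$, the standard well-ordering disjointification $V_i'=V_i\setminus\bigcup_{j<i}V_j$ keeps every $V_i'$ clopen, because each partial union $\bigcup_{j<i}V_j$ is a locally $(<\kappa)$ union of clopen sets and hence clopen; the resulting disjoint clopen cover refines $\OO$ and, being disjoint clopen, is automatically locally finite. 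This is exactly the disjointification already used in Claim \ref{vanDouwenCl2}.

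The hard part will be the clopen realization step just invoked, namely passing from a locally $(<\kappa)$ \emph{open} refinement to a locally $(<\kappa)$ \emph{clopen} one. The naive move of covering each open member by many clopen subsets inflates the local multiplicity beyond $<\kappa$, so one cannot simply refine; the clopen members must instead be produced one per index, without increasing multiplicity, which is precisely why plain disjointification fails (differences of open, or of closed, sets are only locally closed, whereas differences of clopen sets are clopen). Following Misra's treatment of the regular $P$-space case, I would exploit the regularity and strong zero-dimensionality of $X$ to shrink each member through a nested $(<\kappa)$-sequence of open sets with telescoping closures and replace it by the intersection of that sequence: this intersection is a closed $G_{<\kappa}$-set, hence open by $(<\kappa)$-openness and therefore clopen, while the local character is inherited from the original family. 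Verifying that this shrinking still covers and still respects the local $(<\kappa)$ bound is the delicate, genuinely $\kappa$-sensitive point; once it is in hand, the disjointification above closes the argument, and everything outside this step is a routine adaptation of Michael's and Misra's proofs.
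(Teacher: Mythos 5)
Your handling of the trivial implications and of $(3)\Rightarrow(2)$ agrees with the paper's: the level-by-level disjointification $W_\alpha=V_\alpha\setminus\bigcup_{\beta<\alpha}V_\beta$, the $(<\kappa)$-fold intersection of witnessing neighbourhoods, and the regularity of $\kappa$ are exactly the ingredients used there. The problem is the final passage from a locally $(<\kappa)$ open refinement to paracompactness. You route this through ultraparacompactness: produce a locally $(<\kappa)$ refinement by \emph{clopen} sets, ``one per index,'' and then disjointify. But that clopen realization is precisely a shrinking lemma for locally $(<\kappa)$ open covers --- for each member $V$ of the cover you need a single clopen $C_V\subseteq V$ with the $C_V$ still covering --- and you leave it unverified, calling it ``the delicate, genuinely $\kappa$-sensitive point.'' It is not a routine adaptation. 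Lemma \ref{obs:0dim} separates \emph{zero sets} only; to interpolate a clopen set between an arbitrary closed $F\subseteq V$ and $V$ you would need a form of ultranormality, which is essentially the conclusion being proved, and the transfinite recursion underlying the shrinking lemma breaks at limit stages of cofinality $<\kappa$ when the (size $<\kappa$) set of indices whose members contain a given point is cofinal in the stage. Your nested-closures construction yields small clopen sets around each \emph{point}, which, as you yourself note, destroys the local $(<\kappa)$ bound; it does not yield one clopen set per \emph{member}. So the argument is incomplete, and possibly circular, at its crucial step.

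The paper sidesteps all of this by invoking Michael's Theorem \ref{MichaelTh1}: a regular space is paracompact iff every open cover has a \emph{closure-preserving} refinement, with no requirement that the refinement be open, disjoint or clopen. Given the locally $(<\kappa)$ open refinement $\VV$ from (2), one shrinks pointwise by regularity (choose $W_x\ni x$ with $\overline{W_x}\subseteq V_x\in\VV$) and verifies closure-preservation of $\{W_x:x\in X\}$ directly: a point outside all relevant closures is separated from each of the $<\kappa$ many members met by a small neighbourhood, and the $(<\kappa)$-openness of $X$ allows one to intersect these $<\kappa$ many separating open sets. To repair your proof you should either establish the $\kappa$-shrinking lemma you need (explaining where the required separation of closed sets comes from), or replace the whole clopen machinery by the closure-preserving criterion as the paper does.
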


We shall use the following theorem of Michael, given as Theorem 1 in \cite{MR87079}.

\begin{theorem}[E. Michael]\label{MichaelTh1} A regular space $X$ is paracompact iff every open cover of $X$ has a refinement $\WW$
which is {\em closure preserving}, in the sense that for every $\WW'\subseteq \WW$ we have $\overline{\bigcup \WW'}=\bigcup \{\overline{W}:\,W\in \WW'\}$.
\end{theorem}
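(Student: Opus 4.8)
This is a classical theorem of Michael, so the plan is to reconstruct his argument, splitting into the two implications. The forward direction is routine: if $X$ is paracompact, every open cover has a locally finite open refinement $\WW$, and a locally finite family is automatically closure preserving. Indeed, given $\WW'\subseteq\WW$ and $x\in\overline{\bigcup\WW'}$, local finiteness provides a neighbourhood $O$ of $x$ meeting only finitely many members $W_1,\dots,W_n$ of $\WW$; since finite unions commute with closure, $x\in\overline{(W_{i_1}\cup\cdots\cup W_{i_k})}=\overline{W_{i_1}}\cup\cdots\cup\overline{W_{i_k}}\subseteq\bigcup\{\overline W:W\in\WW'\}$, where the $W_{i_j}$ are those of the $W_i$ lying in $\WW'$. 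The reverse inclusion is trivial.

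For the substantive (backward) direction I would first use regularity to reduce an arbitrary closure-preserving refinement to a closed one. Given an open cover $\OO=\{U_s:s\in S\}$, the family of open $V$ with $\overline V\subseteq O$ for some $O\in\OO$ is again an open cover by regularity; applying the hypothesis to it yields a closure-preserving refinement $\WW$, and then $\{\overline W:W\in\WW\}$ is a closure-preserving closed refinement of $\OO$ (closures of a closure-preserving family are closure preserving, and $\overline W\subseteq\overline V\subseteq O$). Next I would amalgamate: choosing for each $W$ an index $s(W)$ with $\overline W\subseteq U_{s(W)}$ and setting $F_s=\bigcup\{\overline W:s(W)=s\}$ produces a closure-preserving closed shrinking $\{F_s:s\in S\}$ with $F_s\subseteq U_s$ and $\bigcup_s F_s=X$ (a union of a subfamily of a closure-preserving closed family is closed, which is exactly what keeps each $F_s$ closed and the whole family closure preserving).

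The engine of the argument is the following consequence of closure preservation for a closed family $\{F_s\}$: for each $x$ the subfamily $\{F_s:x\notin F_s\}$ has closed union not containing $x$, so $N_x=X\setminus\bigcup\{F_s:x\notin F_s\}$ is an open neighbourhood of $x$ meeting only those $F_s$ that contain $x$. Hence, once the closed refinement is also point finite, every point has a neighbourhood meeting only finitely many members, i.e.\ the refinement is locally finite; a locally finite closed refinement of $\OO$ then yields a locally finite open refinement of $\OO$ by the standard open-expansion argument available in a regular space, establishing paracompactness.

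The main obstacle is precisely the passage to a point-finite (equivalently locally finite) refinement: closure preservation together with a well-ordering of the index set, say $\{F_s:s<\mu\}$, only gives the weaker \emph{one-sided} control that the open sets $W_s=U_s\setminus\bigcup_{t<s}F_t$ (open because the union of an initial segment of a closure-preserving closed family is closed) cover, refine $\OO$, and satisfy $x\in W_s\Rightarrow s\le\min\{t:x\in F_t\}$. Turning this one-sided finiteness into genuine local finiteness is the technical heart of Michael's theorem, carried out by a transfinite recursion along $\mu$ that simultaneously thins the $F_s$ to a point-finite closed cover and, using regularity at each stage, fattens them back to open sets refining $\OO$. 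I expect this interleaved recursion, rather than any single estimate, to be where the real work lies.
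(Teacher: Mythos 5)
The paper itself offers no proof of Theorem \ref{MichaelTh1}: it is quoted verbatim as Theorem 1 of Michael's paper \cite{MR87079} and used as a black box in the proof of Lemma \ref{T:GMK}. So your proposal has to stand on its own, and it does not quite do so. The forward direction is correct. In the backward direction, your preparatory reductions are also correct and well justified: regularity converts the hypothesis into a closure-preserving \emph{closed} refinement; amalgamation along a choice function $W\mapsto s(W)$ yields a closure-preserving closed shrinking $\{F_s:s\in S\}$ with $F_s\subseteq U_s$ (unions of subfamilies of a closure-preserving closed family are closed); and the observation that $N_x=X\setminus\bigcup\{F_s:x\notin F_s\}$ is an open neighbourhood of $x$ meeting only the $F_s$ containing $x$ correctly reduces everything to producing a \emph{point-finite} closure-preserving closed refinement. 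But the proof stops exactly there. You note that the well-ordering device $W_s=U_s\setminus\bigcup_{t<s}F_t$ gives only one-sided control, and then you write that you \emph{expect} an interleaved transfinite recursion to ``thin the $F_s$ to a point-finite closed cover'' --- without exhibiting it. That step is not a routine detail to be filled in; it is the entire content of the hard implication, and as sketched it is not clear it can be carried out: discarding points from the $F_s$ to enforce point-finiteness tends to destroy either closedness, the covering property, or closure preservation at limit stages, and there is no obvious inductive hypothesis that preserves all three simultaneously.

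For comparison, the known proofs of the hard implication do not attempt a direct thinning. Michael's own argument iterates the hypothesis countably many times, combining a sequence of closure-preserving closed refinements into an open refinement expressible as a countable union of locally finite families, and then invokes his earlier criterion that a regular space with this $\sigma$-locally-finite refinement property is paracompact; an alternative route observes that a closure-preserving closed shrinking with $\overline{F_s}=F_s\subseteq U_s$ is a cushioned refinement and appeals to the Michael--Nagami theorem. Note that the first route must be handled with care in the context of this paper: the $\sigma$-locally-finite criterion is the $\kappa=\aleph_0$ instance of Lemma \ref{T:GMK}, whose proof here itself cites Theorem \ref{MichaelTh1}, so you would need an independent proof of that criterion (which exists, via the standard Stone--Michael argument) to avoid circularity. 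Either way, a genuine additional construction is required where your proposal currently has a placeholder.
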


\begin{proof} (of Lemma \ref{T:GMK}.) The implications (1) $\implies$ (2) $\implies$ (3) are obvious.

\smallskip 

\noindent{(3) $\implies$ (2)} Let $\OO$ be an open cover of $X$. By (3), there exists a refinement of $\OO$ of the form
$\bigcup_{\alpha < \kappa}\U_\alpha$, where each $\U_\alpha$ is a locally ($<\kappa$) family of open sets. For each $\alpha < \kappa$, 
let $V_\alpha = \bigcup\U_\alpha$. Then $\VV = \{V_\alpha:\alpha < \kappa\}$ is an open cover of $X$. For each $\alpha<\kappa$,
let
\[
W_\alpha = V_\alpha \setminus \bigcup_{\beta < \alpha}V_\beta \,.
\]
Then we claim that $\W = \{W_\alpha:\alpha < \kappa\}$ is a locally ($<\kappa$) refinement of $\VV$. Indeed, given $x\in X$, let 
$\alpha(x) = \min\{\alpha:\,x\in V_\alpha\}$. Then $V_{\alpha(x)}$ intersects $<\kappa$ many members of $\W$, only those whose index is $<\alpha(x)$. 

Finally, the collection $\W' = \{W_\alpha \cap U: \alpha < \kappa, U\in\U_\alpha\}$ is 
a locally ($<\kappa$) refinement of $\OO$. Indeed, let $x\in X$ and $\alpha(x)$ be as above. For every $\alpha\leq\alpha(x)$, there is a neighbourhood $G_\alpha$ of $x$ that intersects $<\kappa$ many members of $\U_\alpha$. Let $G = V_{\alpha(x)} \cap \bigcap_{\alpha\leq\alpha(x)} G_\alpha$, which is open, since $X$ is ($<\kappa$)-open, and contains $x$. 
Then $G$ intersects $<\kappa$ many members of $\W'$ since by construction we have that $G\cap W_\alpha=\emptyset$ for $\alpha>\alpha(x)$.

\smallskip 

\noindent{(2) $\implies$ (1)} Let $\OO$ be an open cover of $X$ and let $\VV$ be a locally $(<\kappa)$ open refinement of $\OO$ which is guaranteed to exists by (2). For each $x\in X$ choose $V_x\in \VV$ such that $x\in \VV_x$ and an open $W_x\in x$ with $\overline{W_x}\subseteq V_x$, which is
possible to do by the regularity of $X$. Then $\WW=\{W_x:\,x\in X\}$ is a locally $(<\kappa)$ closed refinement of $\OO$ and by Theorem \ref{MichaelTh1}, we shall finish if we can show that $\WW$ is closure preserving. Suppose that $\WW'\subseteq \WW$ and let $x\in\overline{\bigcup \WW'}$. We need to show
that $x\in \bigcup\{\overline{W}:\,W\in\WW'\}$. Let $O$ be an open set containing $x$ which intersects only $<\kappa$ many members of $\WW'$,
say $\{W_\alpha:\,\alpha<\alpha^\ast\}$ for some $\alpha^\ast<\kappa$. If $x\notin \overline{W_\alpha}$ for any $\alpha$, then for each $\alpha<\alpha^\ast$ we can find an open $O_\alpha$
containing $x$ such that $O_\alpha\cap W_\alpha=\emptyset$. But then $O\cap \bigcap_{\alpha<\alpha^\ast}O_\alpha$ is an open set (as $X$ is $(<\kappa)$-open)
which contains $x$ and is disjoint from $\bigcup \WW'$- a contradiction. Hence $x\in\overline{W_\alpha}$ for some $\alpha$ and we are done.
$\eop_{\ref{T:GMK}}$
\end{proof}

We can now state the main lemma that is used to connect the $\nabla$ and the $\square$ products, as one may see in several applications.
For $\kappa=\aleph_0$ this is Kunen's Lemma 1.4 in 
\cite{MR514975}.

\begin{lemma}\label{L:6.8d} Suppose that $f:X\to Y$ is a closed continuous function and:
\begin{itemize}
\item $X$ is regular and $(<\kappa)$-open,
\item  for every $y\in Y$, $f^{-1}(\{y\})$ is $(<\kappa^+)$-compact and
\item $Y$ is paracompact.
\end{itemize}
Then $X$ is paracompact. 
\end{lemma}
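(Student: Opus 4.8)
The plan is to fix an arbitrary open cover $\OO$ of $X$ and manufacture a refinement of exactly the shape demanded by clause (3) of Lemma \ref{T:GMK}: a union $\bigcup_{i<\kappa}\U_i$ of $\kappa$ many families of open sets, each of which is in fact locally finite. Since $X$ is regular and $(<\kappa)$-open, Lemma \ref{T:GMK} then converts such a refinement into genuine paracompactness, so no further work is needed once the refinement is built. The construction is the natural generalisation of the classical Michael theorem on closed maps with Lindel\"of fibres (the case $\kappa=\aleph_0$, which is Kunen's Lemma~1.4); the novelty is purely in the cardinal bookkeeping.

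I would build the refinement as follows. For each $y\in Y$ the fibre $f^{-1}(\{y\})$ is $(<\kappa^+)$-compact, so the open cover of it induced by $\OO$ has a subcover of size $\le\kappa$; enumerate the corresponding members of $\OO$ (with repetitions, if need be) as $\{O^y_i:\,i<\kappa\}$ and set $W_y=\bigcup_{i<\kappa}O^y_i$, an open set containing the fibre. The closedness of $f$ now yields the crucial tube: the set $V_y=Y\setminus f(X\setminus W_y)$ is open in $Y$, contains $y$ (because $f^{-1}(\{y\})\subseteq W_y$), and satisfies $f^{-1}(V_y)\subseteq W_y$. As $y\in V_y$ for every $y$, the family $\{V_y:\,y\in Y\}$ is an open cover of $Y$, and paracompactness of $Y$ provides a locally finite open refinement $\{B_\xi:\,\xi<\xi^\ast\}$; for each $\xi$ I fix a $y_\xi$ with $B_\xi\subseteq V_{y_\xi}$.

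For each $i<\kappa$ I then put
\[
\U_i=\{\,f^{-1}(B_\xi)\cap O^{y_\xi}_i:\,\xi<\xi^\ast\,\}.
\]
Every member of $\U_i$ is open and contained in the element $O^{y_\xi}_i\in\OO$, so $\bigcup_{i<\kappa}\U_i$ refines $\OO$; it also covers $X$, since for $x\in X$ one chooses $\xi$ with $f(x)\in B_\xi$, whence $x\in f^{-1}(B_\xi)\subseteq f^{-1}(V_{y_\xi})\subseteq W_{y_\xi}$ and so $x$ lies in some $O^{y_\xi}_i$. Each $\U_i$ is moreover locally finite: given $x$, pick an open $N\ni f(x)$ meeting only finitely many $B_\xi$ (local finiteness of $\{B_\xi\}$); then $f^{-1}(N)$ is an open neighbourhood of $x$ meeting $f^{-1}(B_\xi)\cap O^{y_\xi}_i$ only for the finitely many $\xi$ with $B_\xi\cap N\neq\emptyset$, because $f^{-1}(B_\xi)\cap f^{-1}(N)=f^{-1}(B_\xi\cap N)$. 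Thus clause (3) of Lemma \ref{T:GMK} is verified, and paracompactness of $X$ follows.

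The one genuinely delicate point, and the step I expect to be the real obstacle, is the stratification that forces local finiteness. If one naively lumped all the sets $f^{-1}(B_\xi)\cap O^{y_\xi}_i$ into a single family, a neighbourhood $f^{-1}(N)$ as above would meet only finitely many $B_\xi$ but up to $\kappa$ sets in total (one per index $i$), giving a merely locally $(<\kappa^+)$ family — too weak to recover paracompactness. Splitting the refinement along the fibre-index $i$ repairs exactly this: each layer $\U_i$ is honestly locally finite, and there are precisely $\kappa$ layers, which is the $\sigma$-type decomposition that Lemma \ref{T:GMK} (and, underneath it, Michael's closure-preserving criterion, Theorem \ref{MichaelTh1}) needs; this is where the regularity and $(<\kappa)$-openness of $X$ are spent. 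The tube construction, resting on the closedness of $f$, is the other indispensable ingredient and is the sole place the hypothesis on $f$ is used.
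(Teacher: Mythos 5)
Your proof is correct and follows essentially the same route as the paper: cover each fibre by $\kappa$ many members of $\OO$, use closedness of $f$ to produce the open tubes $V_y$, pull back a locally finite refinement of $\{V_y\}$, stratify by the fibre-index $i<\kappa$, and invoke Lemma \ref{T:GMK}(3). The only (harmless) difference is that your formula $V_y=Y\setminus f(X\setminus W_y)$ works without assuming $f$ is onto, whereas the paper first reduces to the surjective case.
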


\begin{proof} Since paracompactness is inherited by closed subspaces, we have that $f(X)$ is paracompact and therefore it suffices to show the
theorem in the case $f(X)=Y$.
By Lemma \ref{T:GMK}, it suffices to show that an arbitrary open cover of $X$ has a
refinement which can be written as a union of $\kappa$ many locally finite families. Let $\UU$ be an open cover of $X$. For each $y\in Y$, choose a subcollection
$\{U^y_\alpha:\alpha < \kappa\}$ of $\UU$ which covers $f^{-1}\{y\}$. Let
\[
V^y = \left\{z\in Y : f^{-1}\{z\} \subseteq \bigcup_{\alpha < \kappa}U^y_\alpha\right\}\,.
\]
Note that $Y\setminus V^y =f(X\setminus \bigcup_{\alpha < \kappa}U^y_\alpha)$ since $f$ is a surjection. Since $f$ is closed, we have that  $Y\setminus V^y$ is closed and therefore, $V^y$ is open and contains $y$. Therefore, $\{V^y:y\in Y\}$ is an open cover of $Y$. 
Using the paracompactness of $Y$, we can find a locally finite open refinement $\UU'$ of $\{V^y:y\in Y\}$. For each $U\in \UU'$ choose $g(U)\in Y$
such that $U\subseteq V^{g(U)}$. For $y\in Y$ let $W^y=\bigcup\{ U\in \UU':\,g(U)=y\}$. Hence 
$\{W^y:y\in Y\}$ is a locally finite open refinement of $\{V^y:y\in Y\}$. Let 
\[
\OO_\alpha=\{U^y_\alpha \cap f^{-1}(W^y):y\in Y\}
\]
and let $\OO=\bigcup_{\alpha<\kappa} \OO_\alpha$. Clearly, $\OO$ forms an open refinement of $\UU$. It is still a cover of $X$, because if $x\in X$ and $y=f(x)$, then $y\in W^y$ and there is some 
$\alpha<\kappa$ such that $x\in U^y_\alpha$. Therefore $x\in U^y_\alpha \cap f^{-1}(W^y)$. We claim that each $\OO_\alpha$ is locally finite. 

To see that, let $x\in X$ and let $y=f(x)$. We can choose an open set $W$ which contains $y$ and which intersects only finitely many of
$\{W^y:y\in Y\}$. Let $O=f^{-1}(W)$, so $x\in O$ and $O$ is open in $X$. If $x'\in O\cap f^{-1}(W^{y'})$ for some $y'\in Y$, then
$f(x)=y\in W^{y'}\cap W$, hence there can only be finitely many such $y'$. In particular, each $\OO_\alpha$ is locally finite.
$\eop_{\ref{L:6.8d}}$
\end{proof}

\subsection{Proof of Theorem \ref{big}} (a) Let  $X={}^{<\kappa}\bigbox_{\alpha<\kappa} X_\alpha$. Since each $X_\alpha$ is regular and
$(<\kappa)$-open, then so is $X$. The weight of $X$ is $\le\kappa$, since we have assumed that the weight of each $X_\alpha$ is $\le\kappa$ and we
have $\kappa^{<\kappa}=\kappa$. So $X$ is $(<\kappa)^+$-compact. By Lemma \ref{notchtheorem}, $X$ is paracompact.

\smallskip

{\noindent (b)} Let $X=\bigbox_{\alpha<\kappa} X_\alpha$. We first note that since each $X_\alpha$ is regular and
$(<\kappa)$-open, then so is $X$. We plan to use Lemma 
\ref{L:6.8d} applied to the quotient map $q: X \to 
\nabla_{\alpha<\kappa} X_\alpha$.

For $[ x]\in \nabla_{\alpha<\kappa} X_\alpha$, consider 
\[
q^{-1}\left( [ x] \right) = \bigcup_{A\in [\kappa]^{<\kappa}} \left( \bigbox_{\alpha \in A}X_\alpha \times 
\bigbox_{\alpha \notin A}\{x_\beta\} \right)\,.
\]
Since $\kappa^{<\kappa}=\kappa$ and $w(X_\alpha)\leq \kappa$ for every $\alpha < \kappa$, we have that for each $A\in [\kappa]^{<\kappa}$, 
$w(\bigbox_{\alpha \in A}X_\alpha)=\kappa$. Hence, $w\left(q^{-1}\left([x] \right)\right)=\kappa$. Therefore, $q^{-1}\left( [x] \right)$ is $(<\kappa^+)$-compact.

On the other hand, since $w(X)\leq 2^{\kappa}$, the weight of $\nabla_{\alpha<\kappa} X_\alpha$ 
is also $\leq 2^{\kappa}$ and therefore, it is $(<(2^{\kappa})^+)$-compact. Since it is also 
$(<2^{\kappa})$-open by Corollary \ref{C:6.11d}(b), by Lemma \ref{notchtheorem} we conclude  that it is paracompact.

By Lemma \ref{C:6.12d}, $q$ is a closed map. It is clearly a continuous surjection. For each $[x]$, the preimage $q^{-1}\left( [x] \right)$ is $(<\kappa^+)$-compact, 
and $X$ is regular and $(<\kappa)$-open, while $\nabla_{\alpha<\kappa} X_\alpha$  is paracompact.
We conclude by Lemma \ref{L:6.8d} that $\bigbox_{\alpha<\kappa} X_\alpha$ is
paracompact.

\smallskip 

{\noindent (c)} This time let $X={}^{<\kappa^+}\bigbox_{\alpha<\kappa^+} X_\alpha$. As before, $X$ is regular and $(<\kappa)$-open. 
We plan to use Lemma 
\ref{L:6.8d} applied to the quotient map $q: X \to {}^{<\kappa^+}\nabla_{\alpha<\kappa^+} X_\alpha$. In particular, we need to show that each $q^{-1}\left( [x] \right)$ is 
$(<\kappa^+)$-compact. The proof is different than in the case (b) so we give it as a lemma.

\begin{lemma}\label{L:GML5} For each 
$x\in X$, the fiber $q^{-1}([ x])$ is $(<\kappa^+)$-compact.
\end{lemma}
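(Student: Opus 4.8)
The plan is to describe the fibre $q^{-1}([x])$ explicitly, reduce its $(<\kappa^+)$-compactness to the weight estimate already carried out in part (b), and then overcome the one genuine new difficulty: unlike in (b), the whole fibre need \emph{not} have weight $\le\kappa$. Throughout write $d(y)=\{\alpha<\kappa^+:\,y_\alpha\neq x_\alpha\}$, so that, exactly as in part (b),
\[
q^{-1}([x])=\bigcup_{A\in[\kappa^+]^{<\kappa}}\Big(\bigbox_{\alpha\in A}X_\alpha\times\prod_{\alpha\notin A}\{x_\alpha\}\Big)=\{y\in X:\,|d(y)|<\kappa\}.
\]
For $D\subseteq\kappa^+$ with $|D|\le\kappa$ put $F_D=\{y\in q^{-1}([x]):\,d(y)\subseteq D\}$.

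First I would show that each $F_D$ is a fibre of exactly the kind treated in (b), but over the index set $D$ of size $\le\kappa$ in place of $\kappa^+$. For $A\in[D]^{<\kappa}$ the slice $\{y\in F_D:\,d(y)\subseteq A\}$ equals $F_D\cap[x\rest(D\setminus A)]$, which is relatively open because it pins only the $\le\kappa$ coordinates in $D\setminus A$ (this is where $|D|\le\kappa$ is used). Since this slice is homeomorphic to $\bigbox_{\alpha\in A}X_\alpha$, of weight $\le\kappa^{|A|}\le\kappa^{<\kappa}=\kappa$, and there are at most $|[D]^{<\kappa}|\le\kappa^{<\kappa}=\kappa$ of them covering $F_D$ by open sets, I conclude $w(F_D)\le\kappa$, hence $F_D$ is $(<\kappa^+)$-compact just as in (b).

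The point where (c) departs from (b) is that the analogous slices of the \emph{whole} fibre are not open: pinning the complement of $A$ inside $\kappa^+$ would need a basic set of support $\kappa^+$, which is forbidden, so $q^{-1}([x])$ itself may have weight $\kappa^+$ and I cannot invoke the weight bound directly. Instead, given an arbitrary open cover $\OO$ of $q^{-1}([x])$, I would extract a subcover of size $\le\kappa$ by a closing-off recursion of length $\kappa$, using that $\kappa=\kappa^{<\kappa}$ is regular. Build an increasing continuous chain $\langle D_i:\,i<\kappa\rangle$ of subsets of $\kappa^+$ with $|D_i|\le\kappa$, and subfamilies $\OO_i\subseteq\OO$ with $|\OO_i|\le\kappa$: given $D_i$, use the $(<\kappa^+)$-compactness of $F_{D_i}$ to choose $\le\kappa$ basic open sets $[s]$ of $X$, each with $[s]\subseteq O$ for some $O\in\OO$, whose union covers $F_{D_i}$; let $\OO_i$ collect these $O$'s and set $D_{i+1}=D_i\cup\bigcup\{\dom(s):[s]\text{ chosen}\}$, still of size $\le\kappa$. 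Put $D=\bigcup_{i<\kappa}D_i$ and $\OO^\ast=\bigcup_{i<\kappa}\OO_i$, so $|D|\le\kappa$ and $|\OO^\ast|\le\kappa$.

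Finally I would verify that $\OO^\ast$ covers the fibre. Given $y\in q^{-1}([x])$, let $y^\ast$ agree with $y$ on $D$ and with $x$ off $D$; then $d(y^\ast)=d(y)\cap D$ has size $<\kappa$, so by regularity of $\kappa$ it lies in some $D_j$, i.e.\ $y^\ast\in F_{D_j}$. Thus $y^\ast\in[s]\subseteq O$ for one of the basic sets chosen at stage $j$, with $O\in\OO_j\subseteq\OO^\ast$ and $\dom(s)\subseteq D_{j+1}\subseteq D$; since $y$ and $y^\ast$ agree on $D\supseteq\dom(s)$, also $y\in[s]\subseteq O$. Hence $q^{-1}([x])$ is $(<\kappa^+)$-compact. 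The main obstacle is precisely the closing-off step: the basic sets needed to cover $F_{D_i}$ may have support of the full size $\kappa$ reaching \emph{outside} $D_i$, and the whole scheme works only because these supports are absorbed into $D$, so that the final family is determined by coordinates in $D$; the fact that a $D$ of size $\le\kappa$ suffices rests on the regularity of $\kappa$ (a $(<\kappa)$-sized difference set falls below some $D_j$) together with the weight bound inherited from part (b).
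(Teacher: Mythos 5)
Your argument is correct, and its skeleton is the same as the paper's: both proofs rest on the observation that the ``small-support'' pieces of the fibre have weight $\le\kappa^{<\kappa}=\kappa$ and are therefore $(<\kappa^+)$-compact, and both then run a closure argument of length $\kappa$ to produce a subfamily of size $\le\kappa$ whose members have supports trapped inside a fixed set of size $\kappa$, so that an arbitrary point $z$ of the fibre can be replaced by its projection $y^\ast$ (agreeing with $x$ off that set) without changing membership in the chosen basic boxes. Where you differ is in the implementation of the closure step: the paper takes an elementary submodel $M\elementary\HH(\chi)$ with $|M|=\kappa$, $M={}^{<\kappa}M$, sets $\delta=M\cap\kappa^+$ (an ordinal of cofinality $\kappa$) and simply puts $\FF=\UU\cap M$, using elementarity to see that the covers of the pieces $W_F$ for $F\in[\delta]^{<\kappa}$ land inside $\FF$; you instead build the closure by hand, as an increasing continuous chain $\langle D_i:\,i<\kappa\rangle$ of $\kappa$-sized index sets absorbing the supports of the basic boxes chosen at each stage, with an arbitrary $D\in[\kappa^+]^{\le\kappa}$ in place of the initial segment $\delta$. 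Your route is more elementary and self-contained (no $\HH(\chi)$, no elementarity), at the cost of having to verify explicitly that each $F_{D}$ for $|D|\le\kappa$ still has weight $\le\kappa$ --- which you do correctly by covering it with the $\kappa^{<\kappa}=\kappa$ relatively open slices $F_D\cap[x\rest(D\setminus A)]$, $A\in[D]^{<\kappa}$, each homeomorphic to a full box of weight $\le\kappa$; the paper only needs this for supports of size $<\kappa$. One cosmetic caveat: the notation $[s]$, $\dom(s)$ is tailored to $2^\lambda$, whereas here the factors are arbitrary $\kappa$-Sikorski spaces of weight $\le\kappa$, so you should speak of basic open boxes $U$ and their supports $\supt(U)$ (of size $\le\kappa$ in the $(<\kappa^+)$-box topology); the argument is unaffected.
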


\begin{proof} (of Lemma \ref{L:GML5}) Let $\U$ be an open cover of $q^{-1}([ x])$ by basic open sets of $X$ and we shall find
a subcover $\FF$ of size $\le\kappa$. We start by defining an auxiliary function. Suppose that 
$F\in [\kappa^+]^{<\kappa}$ and consider the product 
$W_F = \prod_{\beta < \kappa^+}W_\beta$, where 
\[
W_\beta =\begin{cases}
 \{x_\beta\} &\mbox{if } \beta\notin F\\
X_\alpha  &\mbox{otherwise.}\\
\end{cases}
\]
Then since $|F|<\kappa$ and each $X_\beta$ is assumed to have weight $\le\kappa$, we have that the weight of $W_F$ is at most 
$\kappa^{<\kappa}=\kappa$ and in particular $W_F$ is $(<\kappa^+)$-compact. In particular, there is a subfamily $\UU_F$ of $\UU$ with
$|\UU_F|\le\kappa$ such that $W_F\subseteq \bigcup \UU_F$. Let us denote by $f$ a function which to each $F\in [\kappa^+]^{<\kappa}$ assigns
$f(F)=\UU_F$.

For each $\alpha<\kappa^+$ let 
\[
D_\alpha=\{y\sim x:\,\{\beta:\,y_\beta\neq x_\beta\}\subseteq \alpha\}.
\]
Let $\chi$ be a regular cardinal large enough for the all objects mentioned so far to be elements of $\HH(\chi)$.
By the assumption that $\kappa^{<\kappa}=\kappa$ we can find $M\elementary \HH(\chi)$ such that
$\kappa\subseteq M$, $|M|=\kappa$, $M={}^{<\kappa}M$, and
such that $\kappa, X,\UU, f, x\in M$. It follows that $M\cap\kappa^+$ is an ordinal $\delta$ of cofinality $\kappa$. Let $\FF=\UU\cap M$.
We can observe that the following properties are satisfied by $\FF$ and $\delta$:
\begin{itemize}
\item $\F \subseteq \U$ is of size $|\F|\leq\kappa$,
\item for every $V\in\F$, we have $\supt(V)\subseteq\delta$, 
\end{itemize}

[Why? Because if $V\in \FF$ then $V\in M$ and so $\supt(V)\in M\cap [\kappa^+]^{<\kappa}=[\delta]^{<\kappa}$.]

\begin{itemize}
\item for every $\alpha<\delta$ and $y\in D_\alpha$, there exists $V\in\F$ such that $y\in V$.
\end{itemize}

[Why? Let $\alpha<\delta$ and $y\in D_\alpha$. Let $F=\{\beta:\;y_\beta\neq x_\beta\}$, so $F\in M\cap [\kappa^+]^{<\kappa}$. In particular, $f(F)\in M$.
Since $M$ is closed under $(<\kappa)$-sequences, it contains as subsets all its elements of size $\le\kappa$ and in particular $f(F)\subseteq M\cap \UU=\FF$.
The conclusion follows since there is $V\in f(F)$ which contains $y$.]

We now claim that $q^{-1}([x])\subseteq \bigcup \FF$, that is $\FF$ is a subcover of $\UU$ of size $\le\kappa$.
Indeed, given any $z\sim x$, let $y=z\upharpoonright \delta\cup 
x\upharpoonright (\kappa^+\setminus \delta)$. Since $\cf(\delta)=\kappa$, there must be $\alpha<\delta$ such that $y\in D_\alpha$.
By the above, there exists $V\in\F$ such that $y\in V$. Since $\supt(V)\subseteq \delta$, we have that $V$ contains any point which agrees with $y$
up to $\delta$, in particular $z$. 
We have that $y\in V$. $\eop_{\ref{L:GML5}}$
\end{proof}

\begin{lemma}\label{L:GML4}The quotient map  
$q: X \to {}^{<\kappa^+}\!\nabla_{\alpha<\kappa^+} X_\alpha$ is a closed map.
\end{lemma}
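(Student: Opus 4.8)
The plan is to mirror the proof of Lemma~\ref{C:6.12d}, upgrading it from the index set $\kappa$ to $\kappa^+$. Since the target carries the quotient topology, $q(K)$ is closed exactly when $q^{-1}q(K)$ is closed in $X$, so I fix a closed $K\subseteq X$ and aim to show $q^{-1}q(K)$ is closed. The $\nabla$-relation here identifies two points precisely when they differ on fewer than $\kappa$ coordinates, so writing $\pr_{\kappa^+\setminus A}$ for the projection that forgets the coordinates in $A$,
\[
q^{-1}q(K)=\bigcup_{A\in[\kappa^+]^{<\kappa}}\pr_{\kappa^+\setminus A}^{-1}\,\pr_{\kappa^+\setminus A}(K)\,,
\]
because $y\sim k$ with $k\in K$ holds iff $y$ agrees with $k$ off some $A$ of size $<\kappa$. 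Thus it suffices to prove (i) that each $\pr_{\kappa^+\setminus A}$ with $A\in[\kappa^+]^{<\kappa}$ is a closed map, so that each set in the union is closed, and (ii) that the resulting union of closed sets is itself closed.

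For (i) I would run verbatim the induction on $|A|$ from Claim~\ref{cl:closedmap}, since nothing in that argument uses that the ambient index set is $\kappa$ rather than $\kappa^+$ --- it uses only $|A|<\kappa$. The base case of finite $A$ follows from Observation~\ref{P:M2.1d}, once one notes via Observation~\ref{P:M2.2d} that the finite subproduct $\prod_{\beta\in A}X_\beta$ is again $(<\kappa)$-compact and $(<\kappa)$-open, so that $\pr_{\kappa^+\setminus A}$ is a projection off a $(<\kappa)$-compact $(<\kappa)$-open factor. For infinite $A$ of size $<\kappa$ one writes $A$ as an increasing union of smaller sets, realises $\pr_{\kappa^+\setminus A}(K)$ as an intersection of the sets (closed by the inductive hypothesis) coming from the smaller pieces, and uses the $(<\kappa)$-openness of $X$. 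This establishes that each $\pr_{\kappa^+\setminus A}^{-1}\pr_{\kappa^+\setminus A}(K)$ is closed, and it is here, and only here, that the closedness of $K$ is consumed.

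Step (ii) is where the real work lies and is the main obstacle, because in Lemma~\ref{C:6.12d} the corresponding union ran only over the initial segments $\alpha<\kappa$, giving an increasing $\kappa$-chain to which Lemma~\ref{L:6.10d} applied directly, whereas here the small sets $A$ range over all of $[\kappa^+]^{<\kappa}$, a $\kappa$-directed family of size $\kappa^+$; the $(<\kappa)$-openness of $X$, which closes only unions of $<\kappa$ closed sets, does not suffice on its own. My plan is to stratify the union as $\bigcup_{\gamma<\kappa^+}Q_\gamma$, where $Q_\gamma=\bigcup\{\pr_{\kappa^+\setminus A}^{-1}\pr_{\kappa^+\setminus A}(K):A\in[\gamma]^{<\kappa}\}$. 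This chain is increasing and, crucially, \emph{continuous}: at any limit $\gamma$ (necessarily of cofinality $\le\kappa$, as $|\gamma|\le\kappa$) every $A\in[\gamma]^{<\kappa}$ has $|A|<\kappa=\cf(\gamma)$ and so is bounded below $\gamma$, whence $Q_\gamma=\bigcup_{\gamma'<\gamma}Q_{\gamma'}$. Each $Q_\gamma$ is closed --- for $|\gamma|<\kappa$ it is the single closed set $\pr_{\kappa^+\setminus\gamma}^{-1}\pr_{\kappa^+\setminus\gamma}(K)$, and for $|\gamma|=\kappa$ it is a $\kappa$-directed union of the form already controlled by the $\kappa$-coordinate machinery of Lemma~\ref{C:6.12d}/Lemma~\ref{L:6.10d}. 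One is then left to close the outer continuous $\kappa^+$-chain $\bigcup_{\gamma<\kappa^+}Q_\gamma$, and I expect the genuinely delicate point to be the limit stages of cofinality $\kappa$: a point in the closure of $\bigcup_{\gamma'<\gamma}Q_{\gamma'}$ must be pulled into a single $Q_{\gamma'}$. I anticipate handling this exactly as the $\nabla$-closedness argument of Lemma~\ref{L:6.10d} does, combining the closed-projection property with the $(<\kappa)$-openness of $X$ and, if needed, an elementary-submodel reflection in the spirit of the proof of Lemma~\ref{L:GML5}, exploiting that $M\cap\kappa^+$ has cofinality $\kappa$.
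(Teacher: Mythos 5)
Your reduction to showing that $q^{-1}q(K)=\bigcup_{A\in[\kappa^+]^{<\kappa}}\pr_{\kappa^+\setminus A}^{-1}\pr_{\kappa^+\setminus A}(K)$ is closed is correct, and step (i) is sound: the paper itself reuses the induction of Claim \ref{cl:closedmap} to get that each $\pr_{\kappa^+\setminus F}$ with $|F|<\kappa$ is closed (that is exactly how Claim \ref{L:GML3.5} is proved). But step (ii), which you correctly identify as the crux, is not actually carried out, and the scaffolding you propose for it has concrete defects. First, the chain $\langle Q_\gamma:\gamma<\kappa^+\rangle$ is \emph{not} continuous at limits $\gamma$ of cofinality $<\kappa$: a set $A\in[\gamma]^{<\kappa}$ with $|A|\ge\cf(\gamma)$ can be cofinal in $\gamma$, so it lies in no $[\gamma']^{<\kappa}$ with $\gamma'<\gamma$. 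Second, the claim that $Q_\gamma$ is closed for $|\gamma|=\kappa$ ``by the machinery of Lemma \ref{L:6.10d}'' does not transfer: the staggered-intersection trick there works only because a fixed coordinate $\beta$ is constrained by at most $|\beta|<\kappa$ of the neighbourhoods being intersected, whereas here a coordinate $\beta\in\kappa^+\setminus\gamma$ can be constrained by all $\kappa$ of the separating neighbourhoods $U_{e[\alpha]}$, and $(<\kappa)$-openness does not close an intersection of $\kappa$ many open sets. This can only be repaired for special $\gamma$ for which all the relevant separating sets have support inside $\gamma$ --- and arranging that is precisely the elementary-submodel step. Third, even if every $Q_\gamma$ were closed, an increasing union of $\kappa^+$ many closed sets in a merely $(<\kappa)$-open space is not closed for free, and your proposal defers exactly this point to an unspecified combination of Lemma \ref{L:6.10d} and submodel reflection.

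The paper avoids all of this by never trying to exhibit $q^{-1}q(K)$ as a controlled union of closed sets. Instead it argues pointwise: given $[x]\notin q(K)$, Claim \ref{L:GML3.5} produces for each $F\in[\kappa^+]^{<\kappa}$ a basic clopen $U_F\ni x$ with $U_F\cap K=\emptyset$ and $\supt(U_F)\cap F=\emptyset$; an elementary submodel $M$ containing the map $F\mapsto U_F$ gives $\delta=M\cap\kappa^+$ of cofinality $\kappa$ with $\supt(U_F)\subseteq\delta$ for all $F\in[\delta]^{<\kappa}$; and then a \emph{single} neighbourhood $V=\prod_\beta\bigcap_{\beta\notin F_\gamma}U^{F_\gamma}_\beta$ (for an increasing sequence $\langle F_\gamma:\gamma<\kappa\rangle$ with union $\delta$) is open by exactly the staggering you have in mind, contains $x$, and satisfies $q(V)\cap q(K)=\emptyset$ because the below-$\delta$ disagreement set of any $y\in V$ and $z\in K$ with $y\sim z$ is caught by some $F_\gamma$, forcing $z\in U_{F_\gamma}$. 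Once you execute the submodel argument properly you obtain this single $V$ missing all of $q^{-1}q(K)$ at once, which makes your stratification into the $Q_\gamma$ superfluous; as written, the proposal names the right ingredients but leaves the essential argument undone.
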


\begin{proof} Suppose that $K$ is a closed subset of $X$ and that $[ x]\notin q(K)$.

\begin{claim}\label{L:GML3.5} For every $F\in [\kappa^+]^{<\kappa}$ there exists a basic open set 
\[
U_F = {}^{<\kappa^+}\!\bigbox_{\alpha<\kappa^+} U^F_\alpha
\]
such that $x\in U_F$, $U_F\cap K=\emptyset$, 
and $U^F_\alpha = X_\alpha$ for all $\alpha \in F$.
\end{claim}

\begin{proof}(of Claim \ref{L:GML3.5}) By the same proof as that of Claim \ref{cl:closedmap}, we can show that the natural projection $\pr_F : X  \to
{}^{<\kappa^+}\!\bigbox_{\alpha\notin F} X_\alpha$ is a closed map. Let $z = x\upharpoonright (\kappa^+\setminus F)$.

Then $z\notin \pr_F(K)$ because 
$[ x]\notin q(K)$. There exists $V = {}^{<\kappa^+}\!\bigbox_{\alpha\notin F} V_\alpha$ such that $z\in V$ and $V \cap \pr_F(K)=\emptyset$. Let 
\[
U^F_\alpha = 
\begin{cases}
X_\alpha &\mbox{if }\alpha \in F,\\
V_\alpha &\mbox{if }\alpha \notin F
\end{cases}
\]
and let $U_F$ be as in the statement of the claim. Then $U_F\cap K=\emptyset$ and $x\in U_F$, as required.
$\eop_{\ref{L:GML3.5}}$
\end{proof}

Let $M$ be as in the proof of Lemma \ref{L:GML5}, but this time assume that the function $F\mapsto U_F$ for $U_F$ from Lemma \ref{L:GML3.5} is also an element of $M$.
Then for every $F\in [\delta]^{<\kappa}$, the support $\supt(U_F)$ is contained in $\delta$ and is disjoint from $F$.

Let $\langle F_\gamma:\, \gamma< \kappa\rangle$ be an increasing sequence of sets, with $|F_\gamma|< \kappa$ and $\bigcup_{\gamma< \kappa} F_\gamma = \delta$.
For every $\beta < \kappa^+$ let $V_\beta = \bigcap_{\beta\notin F_\gamma} U^{F_\gamma}_\beta$. 
For $\beta < \delta$, this is an intersection of $<\kappa$ many open sets, while for 
$\beta \geq \delta$, $V_\beta = X_\beta$ because the support of $U_{F_\gamma}$ is contained in $\delta$. Note also that if $\beta < \delta$ and $\beta \in F_0$, and so in all of the $F_\gamma$, then 
$V_\beta = X_\beta$ because there is no $\gamma<\kappa$ such that $\beta\notin F_\gamma$. 
Finally, let $V = \prod_{\beta < \kappa^+} V_\beta$. Then clearly $x\in V$.

To conclude the proof, we show that $q(V) \cap q(K) = \emptyset$. Suppose that 
$[ y] = [ z]$ for some $y\in V$ and $z\in K$. Let $\gamma < \kappa$ such that 
$\{\alpha < \delta : y_{\alpha} \neq z_{\alpha}\} \subseteq F_\gamma$. By definition of $V$, we have 
$y\in U_{F_\gamma}$, and by definition of $U_{F_\gamma}$, we have $z\in U_{F_\gamma}$, 
contradicting the fact that $U_{F_\gamma}\cap K = \emptyset$.
$\eop_{\ref{L:GML4}}$
\end{proof}

Next we need to show that ${}^{<\kappa^+}\!\bigbox_{\alpha<\kappa^+} X_\alpha$ is paracompact. By $2^\kappa=\kappa^+$ we have that 
the weight of 
${}^{<\kappa^+}\!\bigbox_{\alpha<\kappa^+} X_\alpha$ is $\kappa^+$, therefore so is the weight of
${}^{<\kappa^+}\!\nabla_{\alpha<\kappa^+} X_\alpha$. 

Since it is $\kappa^+$-open by 
Lemma \ref{lm:openess}, we have that ${}^{<\kappa^+}\!\nabla_{\alpha<\kappa^+} X_\alpha$ is paracompact by
Lemma \ref{notchtheorem}. Finally, we are in the position to apply Lemma 
\ref{L:6.8d}.
$\eop_{\ref{big}}$

\section{A result relating to $\kappa$-scales}\label{sec:scaling} Combining the methodology of $\kappa$-metrisability and $(<\kappa)$-openness from previous sections, we obtain the following Theorem \ref{T:6.14dd}. Its instance for $\kappa=\aleph_0$ is due to van Douwen as Theorem 10.6 in \cite{MR564100} and the proof here is really just a generalisation of his techniques.

\begin{thm}\label{T:6.14dd} Suppose that $\kappa^{<\kappa} = \kappa$ and that there exists a scale in $(\kappa^\kappa,\le^\ast_\kappa)$. 

Then, if for each $\alpha<\kappa$, $X_\alpha$ is a $\kappa$-metrisable $\kappa$-Sikorski space,
the product $X=\bigbox_{\alpha<\kappa} X_\alpha$ is paracompact.
\end{thm}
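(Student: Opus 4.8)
The plan is to deduce paracompactness of $X=\bigbox_{\alpha<\kappa}X_\alpha$ from Lemma~\ref{L:6.8d} applied to the quotient map $q\colon X\to\nabla_{\alpha<\kappa}X_\alpha$, exactly as in the proof of Theorem~\ref{big}(b). The space $X$ is regular and $(<\kappa)$-open (a product of such spaces), and $q$ is closed by Lemma~\ref{C:6.12d}, since each $X_\alpha$ is $(<\kappa)$-compact and $(<\kappa)$-open. So the two remaining points are: (a) every fibre $q^{-1}([x])$ is $(<\kappa^+)$-compact, and (b) $\nabla_{\alpha<\kappa}X_\alpha$ is paracompact. Of these, (b) is where the scale does its work and is the heart of the argument.

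First I would record a preliminary observation that replaces the weight hypothesis of Theorem~\ref{big}(b): every $\kappa$-metrisable $(<\kappa)$-compact space $X_\alpha$ has weight $\le\kappa$. Indeed, fixing a witnessing $U_\alpha$, for each $i<\kappa$ the family $\{U_\alpha(y,i):y\in X_\alpha\}$ is a partition of $X_\alpha$ into clopen sets (using the equality $U_\alpha(y,i)=U_\alpha(z,i)$ whenever $z\in U_\alpha(y,i)$ from the proof of Claim~\ref{vanDouwenCl2}); being an open cover of a $(<\kappa)$-compact space, it must consist of $<\kappa$ pieces, so $\bigcup_{i<\kappa}\{U_\alpha(y,i):y\in X_\alpha\}$ is a base of size $\le\kappa$. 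Granting this, part (a) goes as in Theorem~\ref{big}(b): writing $q^{-1}([x])=\bigcup_{A\in[\kappa]^{<\kappa}}F_A$ with $F_A\cong\bigbox_{\alpha\in A}X_\alpha$, each $F_A$ has weight $\le\kappa^{<\kappa}=\kappa$ and so is $(<\kappa^+)$-compact, and the fibre is a union of $\kappa^{<\kappa}=\kappa$ such sets, hence $(<\kappa^+)$-compact.

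For (b), fix a scale $\langle g_\xi:\xi<\lambda\rangle$ in $(\kappa^\kappa,\le^\ast_\kappa)$; passing to a cofinal subsequence I may assume $\lambda$ is a regular cardinal. The idea is that the scale turns $\nabla_{\alpha<\kappa}X_\alpha$ into a $\lambda$-metrisable space, whence paracompactness (indeed ultraparacompactness) follows from Claim~\ref{vanDouwenCl2}. A neighbourhood base of $[x]$ in the quotient consists of the sets $q\big(\prod_{\alpha<\kappa}O_\alpha\big)$ for basic boxes $\prod_\alpha O_\alpha\ni x$ (these are open, as their $q$-preimages, the saturations, are open in $X$), and a short computation gives $[y]\in q\big(\prod_\alpha U_\alpha(x_\alpha,f(\alpha))\big)$ iff $\{\alpha:y_\alpha\notin U_\alpha(x_\alpha,f(\alpha))\}$ has size $<\kappa$. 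Set $V([x],\xi)=q\big(\prod_{\alpha<\kappa}U_\alpha(x_\alpha,g_\xi(\alpha))\big)$. Because the scale is $\le^\ast_\kappa$-increasing and dominating, these sets are decreasing in $\xi$ and cofinal among all such neighbourhoods, so $\{V([x],\xi):\xi<\lambda\}$ is a neighbourhood base at $[x]$; this gives clause~(1) of Definition~\ref{def:kappametrisable}.

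The main work is verifying clauses (2)(a) and (2)(b), which I expect to be the crux. Both reduce coordinatewise to the corresponding clauses of the $\kappa$-metrics $U_\alpha$, the point being that one only needs them on all but $<\kappa$ coordinates. For (2)(b): if $[y]\notin V([x],\xi)$ then $B=\{\alpha:y_\alpha\notin U_\alpha(x_\alpha,g_\xi(\alpha))\}$ has size $\kappa$; given $\eta\ge\xi$ and a hypothetical $[z]\in V([y],\eta)\cap V([x],\xi)$, the three ``bad'' sets --- where $z$ escapes the $g_\eta$-ball around $y$, where $z$ escapes the $g_\xi$-ball around $x$, and where $g_\xi(\alpha)>g_\eta(\alpha)$ --- each have size $<\kappa$ (the last because $g_\xi\le^\ast_\kappa g_\eta$), so some $\alpha\in B$ avoids all three, and then clause (2)(b) for $U_\alpha$ yields $U_\alpha(y_\alpha,g_\eta(\alpha))\cap U_\alpha(x_\alpha,g_\xi(\alpha))=\emptyset$, contradicting $z_\alpha$ lying in both. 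Clause (2)(a) is similar, again using $g_\xi\le^\ast_\kappa g_\eta$ to confine the exceptional coordinates to a set of size $<\kappa$. With Definition~\ref{def:kappametrisable} verified, $\nabla_{\alpha<\kappa}X_\alpha$ is $\lambda$-metrisable, hence ultraparacompact by Claim~\ref{vanDouwenCl2}, and Lemma~\ref{L:6.8d} finishes the proof. The delicate point throughout is that the scale is precisely what lets a single well-ordered $\lambda$-indexed family of neighbourhoods capture the whole (non-weight-bounded) local structure of the quotient, while keeping the ultrametric-type separation of the coordinate metrics intact modulo $<\kappa$ coordinates.
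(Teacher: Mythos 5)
Your proposal is correct and follows essentially the same route as the paper: reduce to Lemma~\ref{L:6.8d} via the quotient onto the Nabla product, show the Nabla product is $\lambda$-metrisable using the scale exactly as in Lemma~\ref{L:6.16d}, and handle the fibres by the weight computation from Theorem~\ref{big}(b). Your explicit observation that $\kappa$-metrisability plus $(<\kappa)$-compactness forces weight $\le\kappa$ (via the clopen partitions $\{U_\alpha(y,i):y\in X_\alpha\}$) is a nice touch that the paper only asserts in passing.
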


This section is devoted to the proof of Theorem \ref{T:6.14dd}. Note that 
a scale may not always exist in $(\kappa^\kappa,\le^\ast_\kappa)$, but if it does, its length is a uniquely determined regular cardinal.
Let us fix the spaces 
as in the statement and a scale $\bar{f}=\langle f_i:\,i<\lambda\rangle$ in $(\kappa^\kappa,\le^\ast_\kappa)$,
so $\lambda$ is a regular cardinal and by the definition of a scale, it follows that $\lambda\in[\kappa^+, 2^\kappa]$. 
\begin{lemma}\label{L:6.16d} The Nabla product $\nabla_{\alpha<\kappa} X_\alpha$ is $\lambda$-metrisable.
\end{lemma}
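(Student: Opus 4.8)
The plan is to imitate van Douwen's argument for $\kappa=\aleph_0$ (Theorem 10.6 of \cite{MR564100}), replacing finite modifications by modifications of size $<\kappa$ and replacing $\le^\ast$-domination in $\omega^\omega$ by $\le^\ast_\kappa$-domination witnessed by the scale $\bar f$. Throughout I use that $\kappa^{<\kappa}=\kappa$ forces $\kappa$ to be regular, so that a union of $<\kappa$ many sets of size $<\kappa$ again has size $<\kappa$, and every $S\in[\kappa]^{<\kappa}$ is bounded below $\kappa$. The first thing to record is a convenient neighbourhood base in $\nabla_{\alpha<\kappa}X_\alpha$: working through the quotient map $q$ and the projections $\tau_\alpha$ of \S\ref{subsec:tau}, one checks that for a point $x$ and a choice of open $O_\alpha\ni x_\alpha$ the saturated set
\[
N(x,\langle O_\alpha\rangle)=\bigl\{[y]:\ y_\alpha\in O_\alpha\text{ for all but }<\kappa\text{ many }\alpha<\kappa\bigr\}
\]
is open in $\nabla_{\alpha<\kappa}X_\alpha$ (its $q$-preimage is open in the full box product, since by regularity of $\kappa$ each of its points lies in a box $\prod_\beta V_\beta$ with $V_\beta=X_\beta$ on a bounded initial segment and $V_\beta=O_\beta$ afterwards), and that these sets form a neighbourhood base at $[x]$. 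Crucially, $N(x,\langle O_\alpha\rangle)$ depends on $\langle x_\alpha\rangle$ and $\langle O_\alpha\rangle$ only modulo changes on $<\kappa$ coordinates, so it is a legitimate function of the class $[x]$.

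Next I fix witnesses $U_\alpha:X_\alpha\times\kappa\to\tau(X_\alpha)$ to the $\kappa$-metrisability of each $X_\alpha$, normalised so that $U_\alpha(x_\alpha,0)=X_\alpha$, and define $W:\nabla_{\alpha<\kappa}X_\alpha\times\lambda\to\tau(\nabla_{\alpha<\kappa}X_\alpha)$ by
\[
W([x],i)=N\bigl(x,\langle U_\alpha(x_\alpha,f_i(\alpha))\rangle_{\alpha<\kappa}\bigr).
\]
By the previous remark this is well defined, by the previous paragraph it is open, and $[x]\in W([x],i)$ since $x_\alpha\in U_\alpha(x_\alpha,f_i(\alpha))$ for every $\alpha$. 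To verify property (1) of Definition \ref{def:kappametrisable}, I start from a basic neighbourhood $N(x,\langle O_\alpha\rangle)$ of $[x]$, pick for each $\alpha$ some $g(\alpha)<\kappa$ with $U_\alpha(x_\alpha,g(\alpha))\subseteq O_\alpha$, and use the scale to get $i<\lambda$ with $g\le^\ast_\kappa f_i$. Then for all but $<\kappa$ many $\alpha$ one has $f_i(\alpha)\ge g(\alpha)$ and hence $U_\alpha(x_\alpha,f_i(\alpha))\subseteq U_\alpha(x_\alpha,g(\alpha))\subseteq O_\alpha$ (monotonicity from (2)(a) of $X_\alpha$); intersecting this $<\kappa$-exceptional set with the one coming from membership in $W([x],i)$ gives $W([x],i)\subseteq N(x,\langle O_\alpha\rangle)$.

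Properties (2)(a) and (2)(b) become coordinatewise consequences of the corresponding properties of the $X_\alpha$ together with the $\le^\ast_\kappa$-monotonicity of the scale. For (2)(a), given $i\le i'$, $[y]\in W([x],i)$ and any $[z]\in W([y],i')$, the three sets on which respectively $y_\alpha\in U_\alpha(x_\alpha,f_i(\alpha))$ fails, $f_i(\alpha)\le f_{i'}(\alpha)$ fails, and $z_\alpha\in U_\alpha(y_\alpha,f_{i'}(\alpha))$ fails each have size $<\kappa$; off their union, (2)(a) of $X_\alpha$ gives $U_\alpha(y_\alpha,f_{i'}(\alpha))\subseteq U_\alpha(x_\alpha,f_i(\alpha))$ and hence $z_\alpha\in U_\alpha(x_\alpha,f_i(\alpha))$, so $[z]\in W([x],i)$. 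For (2)(b), if $[y]\notin W([x],i)$ then $P=\{\alpha:\ y_\alpha\notin U_\alpha(x_\alpha,f_i(\alpha))\}$ has size $\kappa$; assuming some $[z]\in W([y],i')\cap W([x],i)$ with $i\le i'$, regularity of $\kappa$ lets me choose $\alpha\in P$ outside the three relevant $<\kappa$-sized exceptional sets, and there (2)(b) of $X_\alpha$ makes $U_\alpha(y_\alpha,f_{i'}(\alpha))$ and $U_\alpha(x_\alpha,f_i(\alpha))$ disjoint although both contain $z_\alpha$, a contradiction.

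The substantive points, rather than the verifications themselves, are pinning down the neighbourhood base of the Nabla product and the representative-independence of $W$, and the uniform use of regularity of $\kappa$ in the bookkeeping: every ``for all but $<\kappa$ many coordinates'' statement must survive taking $<\kappa$ many intersections, and in (2)(b) one must extract a witnessing coordinate from a set of full size $\kappa$ lying outside a $<\kappa$-sized error set. I expect the main obstacle to be purely organisational, namely keeping these exceptional sets straight, since once the correct base is in hand each axiom reduces coordinatewise to the known behaviour of the $U_\alpha$ and to the $\le^\ast_\kappa$-domination supplied by the scale.
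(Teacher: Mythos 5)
Your proposal is correct and follows essentially the same route as the paper: the witnessing neighbourhoods are the Nabla boxes $\nabla_{\alpha<\kappa}U_\alpha(x_\alpha,f_i(\alpha))$ indexed by the scale, property (1) comes from $\le^\ast_\kappa$-domination of the coordinatewise choice function, and (2)(a), (2)(b) reduce coordinatewise to the corresponding properties of the $U_\alpha$ off a $<\kappa$-sized exceptional set (using regularity of $\kappa$). Your explicit checks of representative-independence of $W$ and of the neighbourhood base are points the paper passes over more quickly, but they do not change the argument.
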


\begin{proof}(of Lemma \ref{L:6.16d}) For each $\alpha<\kappa$ define a function
\[
U_\alpha: X_\alpha \times \kappa \to \tau(X_\alpha)
\]
that witnesses the $\kappa$-metrisability of $X_\alpha$ (see Definition \ref{def:kappametrisable}). Let $\langle f_i:\,i<\lambda\rangle$ be a scale
in  ${}^{\kappa}{\kappa}$.

Let $x=(x_\alpha)\in X$ and $f\in {}^{\kappa}{\kappa}$.  
Define an open set $V(x,f)$ in $\nabla_{\alpha<\kappa} X_\alpha$ containing $[x]$ by
\[
V(x,f) = \nabla_{\alpha<\kappa} U_\alpha(x_\alpha,f(\alpha)) \,.
\]
It is clear that $V(x,f)$ is indeed an open set and that $\{V(x,f) : f\in {}^{\kappa}{\kappa}\}$ is a neighbourhood base at 
$[x]\in\nabla_{\alpha<\kappa} X_\alpha$.
[Why? Because every open neighbourhood of $x\in X$ contains one of the form $\prod_{\alpha<\kappa} U_\alpha(x_\alpha,f(\alpha))$, 
for some $f\in{}^\kappa\kappa$. ]

We now observe that
for all $x,y \in \bigbox_{\alpha<\kappa} X_\alpha$  and $f\leq^\star_\kappa g\in {}^{\kappa}{\kappa}$:
\begin{enumerate}[(a)]
\item if $[y] \in V(x,f)$ then $V(y,g) \subseteq V(x,f)$, 
\end{enumerate}
[Why ? Let $\alpha_0 < \kappa$ such that
$f(\alpha) \leq g(\alpha)$ for all $\alpha \geq \alpha_0$. Since $[y] \in V(x,f)$, there exists $\alpha_1\ge\alpha_0$ such that for all 
$\alpha \geq \alpha_1$, $y_\alpha \in U_\alpha(x_\alpha,f(\alpha))$ and hence $y\in U_\alpha(y_\alpha, g(\alpha))\subseteq U_\alpha(x_\alpha,f(\alpha))$
for $\alpha\ge\alpha_1$. This implies that $V(y,g) \subseteq V(x,f)$.]

\begin{enumerate}[(b)]
\item if $[y]\notin V(x,f)$ then $V(y,g) \cap V(x,f) = \emptyset$.
\end{enumerate}
[Why? Since $[y] \notin V(x,f)$, we have $\sup(\{\alpha<\kappa:\,y_\alpha \notin U_\alpha(x_\alpha,f(\alpha))\})=\kappa$. Therefore
$U_\alpha(y_\alpha,g(\alpha)) \cap U_\alpha(x_\alpha,f(\alpha)) = \emptyset$ for cofinally many $\alpha$. In particular, $V(y,g) \cap V(x,f) = \emptyset$.]

It follows that a function $U$ that witnesses that $\nabla_{\alpha<\kappa} X_\alpha$ is 
$\lambda$-metrisable is given by letting $U(x,i) = V(x,f_i)$ all $i < \lambda$.
$\eop_{\ref{L:6.16d}}$
\end{proof}

The proof now follows similarly as in the proof of Theorem \ref{big}, using Lemma \ref{L:6.8d}. The assumptions imply that $X$ is regular and $(<\kappa)$-open. Taking the quotient map $q$ for $f$ and the nabla $\nabla_{\alpha<\kappa} X_\alpha$ as $Y$, we have that $q$ is continuous by definition and closed by Lemma \ref{C:6.12d}. By the remark after Definition \ref{def:kappametrisable}, $Y$ is paracompact since in Lemma \ref{L:6.16d} we have shown that it is
$\lambda$-metrisable. Finally, being $\kappa$-metrisable in conjunction with $\kappa^{<\kappa}=\kappa$ implies that each $X_\alpha$ has weight
$\le\kappa$. Hence Lemma \ref{L:GML5} implies that for each 
$x\in X$, the fiber $q^{-1}([ x])$ is $(<\kappa^+)$-compact. The conclusion follows by Lemma \ref{L:6.8d}.
$\eop_{\ref{T:6.14dd}}$

\section{Products related to singular cardinals}\label{sec:aleph-omega} The first theorem in this section can be proved using a technique discovered by 
M.E. Rudin in her celebrated construction of a Dowker space announced in \cite{MR270328} and written in \cite{RudinDowker}. This construction has been surveyed in various references, including a very clearly written one by Klaas Pieter Hart in \cite{kphart}. 

\begin{theorem}\label{th:aleph_omega} The box product $\bigbox_{n<\omega}(\omega_n+1)$ is paracompact, in fact ultraparacompact.
\end{theorem}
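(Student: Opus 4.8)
The plan is to build, for a given open cover, a pairwise disjoint clopen refinement directly, following the combinatorial idea behind Rudin's Dowker space; the essential feature exploited is that the widths $\omega_n$ are \emph{strictly} increasing.

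First I would fix the elementary features of $X=\bigbox_{n<\omega}(\omega_n+1)$, where each factor carries the order topology. Each $\omega_n+1$ is then a compact, Hausdorff, zero-dimensional ordinal space, so $X$ is Hausdorff and regular and has a base of clopen boxes; moreover $X$ is trivially $(<\omega)$-open and each factor is $(<\omega)$-compact. Consequently it suffices to refine open covers consisting of basic clopen boxes. For each $x\in X$ I would select a member of the cover containing $x$ and a canonical clopen box $B_x=\prod_{n<\omega}B_x(n)\subseteq$ that member, with $B_x(n)=\{x_n\}$ on the coordinates where $x_n$ is $0$ or a successor (these points are isolated in $\omega_n+1$) and $B_x(n)=(g_x(n),x_n]$ on the coordinates where $x_n$ is a limit, for a suitable lower bound $g_x(n)<x_n$. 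The task is to thin the family $\{B_x:x\in X\}$ to a pairwise disjoint clopen refinement that still covers $X$.

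The heart of the argument, and Rudin's contribution, is the construction of this disjoint refinement by a transfinite recursion along a well-ordering of $X$, in which one presses down the lower bounds $g_x(n)$ so that the chosen boxes can be forced to be disjoint. The crucial use of the hypothesis is that the $\omega_n$ increase strictly: since $\cf(x_n)\le\omega_n$ and every ordinal parameter attached to coordinates $\le n$ lies below $\omega_{n+1}$, the interference at coordinate $n$ stays bounded strictly below $\omega_{n+1}$, which lets the diagonal choice of bounds stabilise and the recursion close. A clean way to package the same combinatorics within the paper's framework is to pass to the Nabla quotient: by Lemma \ref{C:6.12d} (with $\kappa=\omega$) the map $q\colon X\to\nabla_{n<\omega}(\omega_n+1)$ is closed and continuous, and each fibre
\[
q^{-1}([x])=\bigcup_{F\in[\omega]^{<\omega}}\Big(\prod_{n\in F}(\omega_n+1)\times\prod_{n\notin F}\{x_n\}\Big)
\]
is a countable union of compact boxes, hence $\sigma$-compact, hence Lindel\"of, i.e.\ $(<\omega^+)$-compact; so by Lemma \ref{L:6.8d} the paracompactness of $X$ follows once one shows $\nabla_{n<\omega}(\omega_n+1)$ is paracompact, and the latter is exactly where Rudin's pressing-down argument is applied to the eventual behaviour of the coding functions $g$.

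The main obstacle is precisely this combinatorial core: one must control \emph{infinitely many} limit coordinates, of possibly large and varying cofinalities, while keeping the refining boxes pairwise disjoint. It is the strict increase of the widths $\omega_n$ that forces the pressing-down to converge; with constant factors the scheme collapses, in keeping with the fact that even the normality of $\bigbox(\omega+1)^\omega$ in Question \ref{open} is open. A final, more routine point is to carry out the construction with clopen disjoint families throughout---so that the outcome is ultraparacompactness and not merely paracompactness---either by producing the disjoint clopen refinement directly on $X$ as above, or by running Lemma \ref{L:6.8d} in its $0$-dimensional form, in the spirit of the second clause of Lemma \ref{lem:compxpara}.
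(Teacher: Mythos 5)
There is a genuine gap, and it starts with the topology. Immediately after the statement the paper specifies that each $\omega_n+1$ carries the topology in which $\omega_n$ is discrete and the point $\omega_n$ has the co-bounded sets as a neighbourhood base, and it explicitly warns that this is \emph{not} the order topology used by Rudin. Your argument assumes the order topology throughout: you take each factor to be compact, and you use that compactness twice, once to apply Lemma \ref{C:6.12d} with $\kappa=\omega$ (which requires the factors to be $(<\omega)$-compact, i.e.\ compact) and once to make the fibres $q^{-1}([x])$ $\sigma$-compact. In the paper's topology the factors $\omega_n+1$ for $n\ge 1$ are not compact (a basic neighbourhood of the top point omits a bounded, possibly infinite, set of isolated points), so the Nabla reduction does not apply as stated; and in the order topology the paracompactness of $\nabla_{n<\omega}(\omega_n+1)$ is essentially the whole difficulty and cannot simply be cited. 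Either way, the reduction you propose does not land on solid ground.

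The second, more serious, problem is that the combinatorial core is never carried out: the ``pressing-down'' that is supposed to turn $\{B_x:x\in X\}$ into a pairwise disjoint clopen cover is described in one sentence and asserted to ``stabilise and close,'' but that recursion \emph{is} the theorem. The paper's actual proof peels off the compact factor $\omega+1$ via Lemma \ref{lem:compxpara} and then works directly on $X=\bigbox_{1\le n<\omega}(\omega_n+1)$: since every $\omega_n+1$ with $n\ge 1$ has uncountable cofinality at its unique non-isolated point, $X$ is $(<\omega_1)$-open, and one builds a decreasing $\omega_1$-sequence of pairwise disjoint clopen covers by intervals $[x,y]$, taking intersections at limit stages (legitimate by $(<\omega_1)$-openness) and showing by a coordinatewise cofinality count that for each point the boxes stabilise at a countable stage inside a member of the given cover. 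In particular the mechanism you isolate --- the strict increase of the widths $\omega_n$ --- is not what drives the proof; what matters is that all but finitely many factors have cofinality $>\omega$. Indeed Theorem \ref{th:gen-aleph_omega} allows constant factors (all $\lambda_i=\omega_1$, recovering Theorem \ref{MERudinth}), contrary to your claim that the scheme collapses for constant factors; the difficulty with $\square(\omega+1)^\omega$ in Question \ref{open} is the countable cofinality of the factor, not its repetition.
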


We remind the reader that the spaces $\omega_n+1$ are considered in the topology where $\omega_n$ is discrete and the point $\omega_n$ has 
a neighbourhood base consisting of co-bounded subsets of $\omega_n$. This is a different topology from the one considered by Rudin.

\smallskip

\begin{proof} We have that $\bigbox_{n<\omega} (\omega_n+1) = (\omega+1) \times \bigbox_{1\le n<\omega} (\omega_n+1)$. Since $\omega+1$ is compact
and 0-dimensional, if we show that $X=\bigbox_{1\le n<\omega}(\omega_n+1)$ is ultraparacompact, we have that 
$\bigbox_{n<\omega} (\omega_n+1)$ is ultraparacompact by Lemma \ref{lem:compxpara}.

As a box product of $(<\omega_1)$-open spaces, $X$ is $(<\omega_1)$-open. We consider
$X$ as an ordered space with the coordinatewise order, namely $\langle x_n:\,1\le n<\omega\rangle \le \langle y_n:\,1\le n<\omega\rangle$ if for all $n$,
$x_n\le y_n$. The notation $[x,y]$ is used for the intervals in this order.

Let $\OO$ be an open cover of $X$. We shall construct a sequence $\bar{\U}=\la\U_\alpha : \alpha < \omega_1\ra$ of clopen covers 
of $X$ such that each $U\in\U_\alpha$ is of the form $\prod_{1\le n<\omega} U_n$, where $U_n=\{x_n\}$ or $U_n = [x_n,\omega_n]$, 
for some $x_n<\omega_n$. In particular, $U$ is of the form $[x,y]$ for some $x\le y$ in $X$. Moreover:
\begin{enumerate}[(1)]
\item each $\U_\alpha$ is a pairwise disjoint clopen cover of $X$,
\item if $\alpha < \beta$ then $\U_\beta$ is a refinement of $\U_\alpha$,
\item if $U\in \U_\alpha$ and $U\subseteq O$ for some $O\in\OO$ then $U\in\U_{\alpha +1}$ and
\item if $U\in \U_\alpha$ is of the form $[x,y]$, and there is no $O\in \OO$ such that $U\subseteq O$, then for every 
$V\in\U_{\alpha +1}$ with $V\subseteq U$ and $V = [u,v]$, there is some $n$ such that $u_n = v_n < y_n=\omega_n$ or 
$V \subseteq O$ for some $O\in \OO$.
\end{enumerate}

Supposing that $\bar{\U}$ has been constructed,  let $y \in X$ and for $\alpha < \omega_1$ denote the unique element of $\U_\alpha$ that contains $y$ by 
$[u_\alpha, v_\alpha]$. Note that for all $n$, either $(u_\alpha)_n = (v_\alpha)_n = y_n$, or $(v_\alpha)_n = \omega_n$ and 
$y_n \in [(u_\alpha)_n, \omega_n]$. Letting $\alpha$ increase, we either have that $[u_\alpha, v_\alpha]\subseteq O$ for some $O\in\OO$, in which case for every $\beta\ge\alpha$ we have that $[u_\beta, v_\beta]=[u_\alpha, v_\alpha]$, or otherwise $[u_\beta, v_\beta]\subseteq [u_\alpha, v_\alpha]$ and
there is some $n<\omega$ such that $(u_\beta)_n=(v_\beta)_n<(v_\alpha)_n$.
By a cofinality argument, it then follows that:
\begin{enumerate}[(a)]
\item for every $1\le n<\omega$ there is an $\alpha_n$ such that $(v_\alpha)_n = (v_{\alpha_n})_n$ for all $\alpha \geq \alpha_n$. 
Let $\alpha_y = \sup_n \alpha_n$ and $\beta_y = \alpha_y +1$,
\item there is an $O \in \OO$ with $[u_{\beta_y}, v_{\beta_y}] \subseteq O$,
\item if $\beta\geq\beta_y$ then $[u_\beta, v_\beta]  = [u_{\beta_y}, v_{\beta_y}]$.
\end{enumerate}

Then it follows from (1) - (4) that the family  $\{[u_{\beta_y}, v_{\beta_y}] : y \in X\}$ is a pairwise disjoint open 
refinement of $\OO$. This shows that $X$ is ultraparacompact, provided we can construct $\bar{\U}$.
Let us show how to do it. 

We start with $\U_0 = \{X\}$. Note that $X = [0, t]$, 
where $t_n = \omega_n$ for all $1\le n\in\omega$.

To construct $\U_{\alpha +1}$ from $\U_\alpha$ let $U \in \U_\alpha$, say $U = [x,y]$. If there is an $O \in \OO$ with 
$U \subseteq O$ put $\I_U = \{U\}$. Otherwise, if there is no such $O$, it cannot be the case that $(x)_n=(y)_n$ for all $n$,
since $\OO$ is a cover of $X$. Also, $y$ belongs to some element of $\OO$. By intersecting such a set with 
$[x,y]$ if necessary, we can find a $z<y$ such that $x<z$ and $[z,y] \subseteq O$ for some $O\in\OO$. (Note that $x<z$ means that $x\le z$ but not
$x=z$, so simply, there exists $n$ such that $(x)_n<(z)_n$.)

For $A\subseteq \omega\setminus\{0\}$ let $\VV_A$ consists of all, if any, sets of the form $\prod_{1\le n<\omega} W_n$ which satisfy
\begin{itemize}
\item $n\in A\implies W_n = \{w_n\}\text{ for some }x_n\leq w_n< z_n$ and
\item $n\notin A\implies W_n = [z_n,y_n]$.
\end{itemize}
By the observation above, $\VV_A$ is defined for at least some non-empty $A$.
Set $\I_U = \bigcup\{\VV_A : \,A\subseteq \omega\setminus \{0\} \}$. Note that $U = \bigcup\I_U$ and $\I_U$ is a pairwise disjoint family of clopen sets.
Finally, let $\U_{\alpha +1} = \bigcup\{\I_U: U\in \U_\alpha\}$.

For a limit ordinal $\alpha<\omega_1$ we proceed in the following way: for $y \in X$ and $\beta<\alpha$, denote the unique element of $\U_\beta$ that contains $y$ by $[u_\beta(y), v_\beta(y)]$. Let $[u_\alpha(y), v_\alpha(y)] = 
\bigcap_{\beta<\alpha} [u_\beta(y), v_\beta(y)]$ which is clopen since $X$ is $(<\omega_1)$-open.
Note that $(u_\alpha(y))_n = (v_\alpha(y))_n = y_n$, or $(v_\alpha(y))_n = \omega_n$ and 
$y_n \in [(u_\alpha(y))_n, \omega_n]$. Note also that for $x\neq y$, either $[u_\alpha(x), v_\alpha(x)] = [u_\alpha(y), v_\alpha(y)]$ or $[u_\alpha(x), v_\alpha(x)] \cap [u_\alpha(y), v_\alpha(y)] = \emptyset$.
Finally, let $\U_{\alpha} = \bigcup\{[u_\alpha(x), v_\alpha(x)]: x\in X\}$.

It is easy to verify that the construction of $\bar{\U}$ verifies all the requirements and hence $X$ has been shown to be ultraparacompact.
$\eop_{\ref{th:aleph_omega}}$
\end{proof}

We note the main points of the proof of Theorem \ref{th:aleph_omega} in order to be able to generalise it. It was important that for $n\ge 1$ the space
was $(<\omega_1)$-open and that $\omega=\cf(\omega)<\omega_1$. So, with minor modifications, the same proof gives the following.

\begin{theorem}\label{th:gen-aleph_omega} Suppose that $\langle \lambda_i:\,i<\kappa\rangle$ is a sequence of ordinals of which all
but finitely many are uncountable and satisfy $\cf(\lambda_i)>\kappa$. 

Then $\bigbox_{i<\kappa}(\lambda_i+1)$ is ultraparacompact.
\end{theorem}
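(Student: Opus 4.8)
The plan is to follow the proof of Theorem~\ref{th:aleph_omega} line by line, reading the number of factors $\omega$ as $\kappa$ and the length $\omega_1$ of the recursion as $\kappa^+$. First I would peel off the finitely many exceptional coordinates. Let $F\subseteq\kappa$ be the finite set supplied by the hypothesis, i.e.\ the indices $i$ for which $\lambda_i$ is not an uncountable ordinal of cofinality $>\kappa$; as with the factor $\omega+1=\omega_0+1$ in Theorem~\ref{th:aleph_omega}, each such $\lambda_i+1$ is compact and $0$-dimensional, so $\prod_{i\in F}(\lambda_i+1)$ is a compact $0$-dimensional space, and the full box product splits as
\[
\bigbox_{i<\kappa}(\lambda_i+1)\ \cong\ \Big(\prod_{i\in F}(\lambda_i+1)\Big)\times\bigbox_{i\in\kappa\setminus F}(\lambda_i+1).
\]
By Lemma~\ref{lem:compxpara} it then suffices to prove that $X=\bigbox_{i\in\kappa\setminus F}(\lambda_i+1)$ is ultraparacompact, where now every factor is uncountable with $\cf(\lambda_i)>\kappa$. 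The force of this hypothesis is that each $\lambda_i+1$ is then $(<\kappa^+)$-open --- an intersection of $\le\kappa$ final segments $[\alpha_j,\lambda_i]$ equals $[\sup_j\alpha_j,\lambda_i]$ with $\sup_j\alpha_j<\lambda_i$ by $\cf(\lambda_i)>\kappa$ --- whence $X$ itself is $(<\kappa^+)$-open by Definition~\ref{def:kappaopen}.

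Next I would carry out the Rudin-style recursion of Theorem~\ref{th:aleph_omega}, now of length $\kappa^+$. Order $X$ coordinatewise and work with boxes $[x,y]=\prod_i U_i$ in which each $U_i$ is a singleton $\{x_i\}$ or a final segment $[x_i,\lambda_i]$; such boxes are clopen in $X$ and are closed under the intersections taken below. I would recursively build pairwise disjoint clopen covers $\bar\U=\langle\U_\alpha:\alpha<\kappa^+\rangle$ by such boxes, satisfying the exact analogues of conditions (1)--(4): $\U_\beta$ refines $\U_\alpha$ for $\alpha<\beta$; a box already inside some member of $\OO$ is frozen; and a box $[x,y]$ inside no member of $\OO$ is split by taking a member $O\in\OO$ containing its corner $y$ and choosing $z$ with $x<z<y$ and $[z,y]\subseteq O$, splitting $[x,y]$ into the now-covered core $[z,y]$ together with the pieces $\VV_A$ that pin the coordinates in $A$ to singletons below $z$. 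At a limit stage $\alpha<\kappa^+$ I would set
\[
[u_\alpha(y),v_\alpha(y)]=\bigcap_{\beta<\alpha}[u_\beta(y),v_\beta(y)];
\]
this is again a box of the required form exactly because $X$ is $(<\kappa^+)$-open, since in each coordinate one intersects $\le\kappa$ final segments whose left endpoints have supremum $<\lambda_i$ by $\cf(\lambda_i)>\kappa$.

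Finally I would extract the refinement by the cofinality argument of Theorem~\ref{th:aleph_omega}. Fix $y\in X$ and let $[u_\alpha,v_\alpha]$ be the member of $\U_\alpha$ containing $y$. In each coordinate $i$ the upper endpoint $(v_\alpha)_i$ is non-increasing in $\alpha$ and changes at most once --- it equals $\lambda_i$ while coordinate $i$ is a final segment and drops to $y_i$ once that coordinate is pinned, after which it is frozen --- so it stabilises at some $\alpha_i<\kappa^+$. As there are at most $\kappa$ coordinates and $\kappa^+$ is regular, $\alpha_y=\sup_i\alpha_i<\kappa^+$; put $\beta_y=\alpha_y+1$. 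Condition~(4) then forces $[u_{\beta_y},v_{\beta_y}]\subseteq O$ for some $O\in\OO$: otherwise the step from $\alpha_y$ to $\beta_y$ would pin a previously unpinned coordinate $i$ and move $(v_\alpha)_i$ past its stabilisation point $\alpha_i\le\alpha_y$, a contradiction. Hence $\{[u_{\beta_y},v_{\beta_y}]:y\in X\}$ is a pairwise disjoint clopen refinement of $\OO$, and $X$ is ultraparacompact.

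The hypotheses are genuinely used in only two places, which are therefore the steps deserving care; everything else is the bookkeeping of (1)--(4). The limit stages require $X$ to be $(<\kappa^+)$-open, which is precisely what $\cf(\lambda_i)>\kappa$ provides (the generalisation of ``$(<\omega_1)$-open'' in Theorem~\ref{th:aleph_omega}), while the bound $\alpha_y<\kappa^+$ uses the regularity of $\kappa^+$ over $\le\kappa$ coordinates (the generalisation of $\cf(\omega)=\omega<\omega_1$). The one point I would verify most carefully is that these mesh: at every stage $\alpha<\kappa^+$ one intersects only $|\alpha|\le\kappa$ boxes, so in each coordinate the accumulated left endpoints form a set of $\le\kappa$ ordinals below $\lambda_i$, whose supremum stays below $\lambda_i$ by $\cf(\lambda_i)>\kappa$; thus the boxes never degenerate to the forbidden top point and remain clopen throughout.
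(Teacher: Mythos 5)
Your proposal follows exactly the route the paper intends: Theorem \ref{th:gen-aleph_omega} is stated with no more proof than the remark that the argument for Theorem \ref{th:aleph_omega} goes through ``with minor modifications'', the two ingredients singled out being that the relevant factors are $(<\kappa^+)$-open and that the number of coordinates, $\kappa$, is below $\cf(\kappa^+)$. Your rendering of the length-$\kappa^+$ recursion, of the limit stages via $(<\kappa^+)$-openness (which is exactly where $\cf(\lambda_i)>\kappa$ is used), and of the stabilisation argument over $\le\kappa$ coordinates is correct and is precisely the intended generalisation.

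There is, however, one step that fails as written: the claim that for each exceptional index $i$ the factor $\lambda_i+1$ is compact. In the topology used here ($\lambda_i$ discrete, the top point having the co-bounded sets as its neighbourhoods), $\lambda_i+1$ is compact only when $\lambda_i\le\omega$: for $\lambda_i>\omega$ the open cover consisting of $[\omega,\lambda_i]$ together with the singletons $\{n\}$ for $n<\omega$ has no finite subcover. The hypothesis of Theorem \ref{th:gen-aleph_omega} permits exceptional factors of this kind (for instance $\lambda_0=\omega\cdot 2$ with $\kappa=\omega$, or $\lambda_0=\omega_1$ with $\kappa=\omega_1$), and for those Lemma \ref{lem:compxpara} cannot be invoked. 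The gap is local and repairable without compactness: show directly that if $Y$ is ultraparacompact and $\lambda$ is any ordinal carrying this topology, then $(\lambda+1)\times Y$ is ultraparacompact. Indeed, given an open cover, choose for each $y\in Y$ a basic box $[\gamma_y,\lambda]\times V_y$ around $(\lambda,y)$ inside a member of the cover, refine $\{V_y:\,y\in Y\}$ to a disjoint clopen cover $\{W_j\}$ of $Y$ with $W_j\subseteq V_{y_j}$, keep the pairwise disjoint clopen sets $[\gamma_{y_j},\lambda]\times W_j$, and observe that what remains is the disjoint clopen union of the sets $\{\beta\}\times W_j$ for $\beta<\gamma_{y_j}$, each homeomorphic to the clopen subspace $W_j$ of $Y$ and hence ultraparacompact, so the cover can be refined separately on each. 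Peeling off the finitely many exceptional coordinates one at a time by this lemma, and then running your recursion on the remaining coordinates, completes the proof.
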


In particular, we obtain as a corollary Theorem \ref{MERudinth}, by letting $\kappa=\aleph_0$ and each $\lambda_i=\omega_1$, for $i<\omega$.

\section{Large cardinals}\label{sec:large_car} We have mentioned the known fact that $\kappa$ is a strongly compact cardinal iff for every $\lambda$, the space ${}^{<\kappa}\square \,2^\lambda$ is $(<\kappa)$-compact. Let us also comment on weakly compact cardinals. For an uncountable $\kappa$ satisfying 
$\kappa^{<\kappa}=\kappa$, we have that 
${}^{<\kappa}\square \,2^\kappa$ is $(<\kappa)$-compact iff $\kappa$ is a weakly compact cardinal (see Theorem 5.1 in \cite{BuhagiarDzamonja1}). The case when $\kappa$ is singular does not seem to have been treated in the topology we consider here. Specifically, Luca Motto Ros proves in Theorem 5.6 in \cite{luca} that for $\kappa$ singular, the space ${}^{<<\kappa}\square\, 2^\kappa$ (see Definition \ref{boundedtopology} below) is not $(<\kappa)$-compact, but the topology in question is
the bounded topology, that is only the boxes of bounded sizes are taken into the base. We give a proof of the analogous fact for the topology ${}^{<\kappa}\square \,2^\kappa$. This adds to  the general theory of weakly compact cardinals and we do obtain a bit more by exhibiting a cover of
${}^{<\kappa}\square\, 2^\kappa$ consisting of $\ge\kappa$ pairwise disjoint basic open sets.

\begin{theorem}\label{singularnotcompact} Suppose that $\kappa^{<\kappa}>\kappa$ (which is in particular true if $\kappa$ is singular). Then the space ${}^{<\kappa}\square\, 2^\kappa$ is not $(<\kappa)$-compact and moreover, it has a cover of size $\ge\kappa$ consisting of pairwise disjoint basic open sets.
\end{theorem}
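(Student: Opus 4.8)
The plan is to prove the ``moreover'' clause and get non-$(<\kappa)$-compactness for free. If $\{[s_a]:a\in A\}$ is a cover of ${}^{<\kappa}\square\,2^\kappa$ by pairwise disjoint nonempty basic clopen sets with $|A|\ge\kappa$, then deleting any single $[s_a]$ leaves its (nonempty, disjoint) points uncovered, so the only subcover is the whole family; hence no subcover has size $<\kappa$. Thus it suffices to build a maximal antichain in $\Fn(\kappa,2,\kappa)$ of size $\ge\kappa$, i.e.\ a partition of $2^\kappa$ into $\ge\kappa$ pairwise disjoint basic clopen sets $[s]$, each with $|\supt([s])|<\kappa$.

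I would first split on cardinal arithmetic. If there is some $\mu<\kappa$ with $2^\mu\ge\kappa$, the family $\{[t]:t\in{}^{\mu}2\}$ already works: each $[t]$ has support $\mu$, so $<\kappa$; distinct total functions on $\mu$ are incompatible; every $f\in 2^\kappa$ extends $f\upharpoonright\mu$; and there are $2^\mu\ge\kappa$ of them. Otherwise $2^\mu<\kappa$ for all $\mu<\kappa$. Since for regular $\kappa$ one has $\kappa^{<\kappa}=2^{<\kappa}$, a regular $\kappa$ in this case would satisfy $\kappa^{<\kappa}\le\kappa$, contradicting the hypothesis; so this remaining case forces $\kappa$ to be \emph{singular}, and it is the substantive one.

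For singular $\kappa$ put $\theta=\cf(\kappa)$, fix an increasing sequence $\langle\kappa_j:j<\theta\rangle$ cofinal in $\kappa$ with $\theta\le\kappa_0$, and partition $\kappa=\bigsqcup_{j<\theta}I_j$ with $|I_j|=\kappa_j$. The naive ``first non-zero block'' antichain, $P_{j,p}=\{f:f\upharpoonright I_{j'}\equiv 0\ (j'<j)\text{ and }f\upharpoonright I_j=p\}$ for $p\in 2^{I_j}\setminus\{0\}$, is pairwise disjoint with supports of size $\kappa_j<\kappa$ and covers everything except the constant function $0^\kappa$; that single stranded point is its only defect, and it cannot be repaired by enlarging a piece. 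The trick I would use is a variable-depth refinement keyed to block $I_0$ alone: include the one piece $Z=\{f:f\upharpoonright I_0\equiv 0\}$, which absorbs $0^\kappa$ and every $f$ vanishing on $I_0$; and for each non-zero $p\in 2^{I_0}$ choose a depth $d(p)<\theta$ and include all pieces $[\,p\cup q\,]$ with $q\in 2^{\,I_1\cup\cdots\cup I_{d(p)}}$, which together catch exactly $\{f:f\upharpoonright I_0=p\}$. Because $\theta<2^{\kappa_0}$ there are enough non-zero first-block patterns to let $\{d(p):p\}$ be cofinal in $\theta$; the pieces over such a $p$ number $2^{\kappa_{d(p)}}$, and summing over a cofinal set of depths gives $\sum_{j<\theta}2^{\kappa_j}\ge\sum_{j<\theta}\kappa_j=\kappa$. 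Each support $I_0\cup\cdots\cup I_{d(p)}$ has size $\le\kappa_{d(p)}\cdot\theta<\kappa$ since $\kappa$ is a limit cardinal, and pairwise disjointness and covering are immediate from the block structure (two pieces either differ on $I_0$, or agree there and differ on $I_1\cup\cdots\cup I_{d(p)}$).

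The main obstacle is exactly this singular case. When $2^\mu<\kappa$ for every $\mu<\kappa$, any partition whose pieces all share a single support of size $\sigma<\kappa$ has only $2^\sigma<\kappa$ members, so one is forced into pieces of genuinely different and unboundedly large supports; and then the constant-$0$ function keeps escaping every finite-depth frontier. The real content is the observation that by letting the depth grow cofinally with the pattern on one fixed block, one simultaneously catches every point (including $0^\kappa$, via $Z$) and drives the piece-count up to $\sum_{j<\theta}2^{\kappa_j}\ge\kappa$. The only genuine calculations are that $\theta\le\kappa_0$ supplies enough first-block patterns to index a cofinal set of depths, and that $\kappa_{d(p)}\cdot\theta<\kappa$ keeps every support small.
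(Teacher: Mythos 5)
Your proof is correct, and in the substantive singular case it takes a genuinely different route from the paper's. The preliminary reduction (a pairwise disjoint cover by nonempty basic clopen sets admits no proper subcover, hence none of size $<\kappa$) and your first case (some $\mu<\kappa$ with $2^\mu\ge\kappa$, partition by total patterns on $\mu$) coincide with what the paper does for regular $\kappa$; the paper splits on regular versus singular, whereas you split on cardinal arithmetic, which is slightly cleaner since your easy case also swallows some singular cardinals. In the remaining singular case the constructions diverge. The paper partitions $2^\theta$ (with $\theta=\cf(\kappa)$) into $\theta$ cells $A_i$ each of size $2^\theta$, lets $i^\ast(f)$ record which cell contains $f\upharpoonright\theta$, and takes the piece through $f$ to fix $f$ on $\theta$ and on a block of size $\kappa_{i^\ast}$ inside \emph{every} interval $[\kappa_i,\kappa_i+\kappa_{i^\ast})$ for $i\ge i^\ast$, so its supports are cofinal in $\kappa$. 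You instead key the depth $d(p)$ to the nonzero pattern $p$ on a single first block $I_0$, use initial-segment supports $I_0\cup\dots\cup I_{d(p)}$, and absorb the degenerate points vanishing on $I_0$ into the one extra piece $Z$; the count $\sum 2^{\kappa_j}=2^{<\kappa}\ge\kappa$ comes out the same. Your version buys something extra: if the blocks $I_j$ are chosen as intervals, every support is bounded in $\kappa$, so the identical family already witnesses the failure of $(<\kappa)$-compactness for the coarser bounded topology ${}^{<<\kappa}\square\,2^\kappa$ of Definition \ref{boundedtopology}, recovering in passing the result of Motto Ros that the paper cites separately. The paper's spread-out supports are not needed for the theorem as stated, and your argument is, if anything, the more economical of the two.
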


\begin{proof} In fact, we construct a covering of $X={}^{<\kappa}\square\, 2^\kappa$ consisting of disjoint basic open sets and which has size $\ge\kappa$.

{\noindent (1)} First suppose that $\kappa$ is singular.
Let $\theta=\cf(\kappa)$ and let $\langle \kappa_i:\, i<\theta\rangle$ be an increasing continuous sequence of cardinals cofinal in $\kappa$ and such that 
$\kappa_0=\theta$. Let us partition $2^\theta=\bigcup_{i<\theta} A_i$ where each $A_i$ has size $2^\theta$.

For each $f\in 2^\kappa$ define a basic clopen set $U_f$ as follows. First, let $i^\ast=i^\ast(f)<\theta$ be such that $f\rest\theta\in A_{i^\ast}$. Then 
for any $\alpha<\kappa$,
the projection $\pi_\alpha(U_f)$ is defined as follows:
\[
\pi_\alpha(U_f)=
\begin{cases}
\{ f(\alpha)\} & \mbox{if } \alpha<\theta\mbox{ or } i\ge i^\ast \mbox{ and }\alpha\in [\kappa_i,\kappa_i+ \kappa_{i^\ast}), \\
\{0,1\} & \mbox{otherwise.}
\end{cases}
\]
Since the support of $U_f$ has size $\le \theta\cdot \kappa_{i^\ast} =  \kappa_{i^\ast} < \kappa$, the set $U_f$ is basic clopen in ${}^{<\kappa}\square\, 2^\kappa$.
Clearly, $f\in U_f$, hence $\mathcal U=\{U_f:\,f\in 2^\kappa\}$ covers $X$. Note that $\mathcal U=\bigcup_{i<\theta}\{U_f:\, i^\ast(f)=i\}$. 
Let $\UU_i=\{U_f:\, i^\ast(f)=i\}$.

Suppose that $f,g$ are such that $i^\ast(f)=i^\ast(g)=i^\ast$. Then either for all $i\ge i^\ast$ we have $f\rest [\kappa_i,\kappa_i+ \kappa_{i^\ast})=
g\rest [\kappa_i,\kappa_i+ \kappa_{i^\ast})$, in which case $U_f=U_g$, or not, in which case $U_f\cap U_g=\emptyset$.
Hence each $\UU_i$ consists of exactly $2^{\kappa_i}$ pairwise disjoint sets. On the other hand, if $f$ and $g$ are such that $i^\ast(f)\neq i^\ast (g)$,
then in particular $f\rest \theta\neq g \rest \theta$ and hence $U_f\cap U_g=\emptyset$.

In conclusion, the cover $\UU$ consists of $\sup_{i<\theta} 2^{\kappa_i}=2^{<\kappa}\ge \kappa$ pairwise disjoint basic open sets.

{\noindent (2)} Now suppose that $\kappa$ is regular and $\kappa^{<\kappa}>\kappa$. Let $\theta<\kappa$ be such that $2^\theta>\kappa$.
For $A\subseteq \theta$ let $O_A$ be the basic open set in $X$ consisting of points whose restriction to $\theta$ is $\chi_A$. Let
$\OO=\{O_A:\,A\subseteq \theta\}$. Then $\OO$ is an open cover of $X$ consisting of $>\kappa$ pairwise disjoint open sets.
$\eop_{\ref{singularnotcompact}}$
\end{proof}

\begin{corollary}\label{weakcom} An uncountable cardinal $\kappa$ is weakly compact iff ${}^{<\kappa}\square \,2^\kappa$ is $(<\kappa)$-compact.
\end{corollary}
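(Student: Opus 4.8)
The plan is to reduce the corollary to the combination of Theorem \ref{singularnotcompact} with the equivalence already quoted in the paragraph preceding it, namely Theorem 5.1 of \cite{BuhagiarDzamonja1}, which asserts that for uncountable $\kappa$ with $\kappa^{<\kappa}=\kappa$ the space ${}^{<\kappa}\square\,2^\kappa$ is $(<\kappa)$-compact iff $\kappa$ is weakly compact. The argument then hinges on a case split according to whether $\kappa^{<\kappa}=\kappa$ or $\kappa^{<\kappa}>\kappa$; note that $\kappa^{<\kappa}\ge\kappa^1=\kappa$ always, so these two cases are exhaustive. Before splitting, I would record the one external fact needed beyond the two results in the excerpt: every weakly compact cardinal is inaccessible and therefore satisfies $\kappa^{<\kappa}=\kappa$. (This is standard: regularity lets one bound the range of any $f:\mu\to\kappa$ with $\mu<\kappa$ by some $\alpha<\kappa$, and the strong limit property gives $|\alpha|^{\mu}<\kappa$, whence $\kappa^{\mu}=\kappa$ for all $\mu<\kappa$.)

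In the case $\kappa^{<\kappa}=\kappa$, there is nothing further to do: the biconditional is exactly the content of Theorem 5.1 of \cite{BuhagiarDzamonja1}, whose hypothesis is met.

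In the case $\kappa^{<\kappa}>\kappa$, I would argue that \emph{both} sides of the biconditional are false, so that it holds vacuously. The right-hand side fails by Theorem \ref{singularnotcompact}, which shows directly that ${}^{<\kappa}\square\,2^\kappa$ is not $(<\kappa)$-compact (indeed it exhibits a pairwise disjoint basic open cover of size $\ge\kappa$). The left-hand side fails because, by the fact recorded above, weak compactness of $\kappa$ would force $\kappa^{<\kappa}=\kappa$, contradicting the case hypothesis. Since the two cases together exhaust all uncountable $\kappa$, the corollary follows.

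I do not expect a genuine obstacle here, as the substantive work has been front-loaded into Theorem \ref{singularnotcompact} and the cited Theorem 5.1. The only point demanding care is to read the biconditional correctly as ``both true or both false'': in the regime $\kappa^{<\kappa}>\kappa$ one must dispatch it by showing both sides fail, rather than attempting to invoke the cited equivalence, whose hypothesis $\kappa^{<\kappa}=\kappa$ is precisely what is violated there.
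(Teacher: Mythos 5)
Your proof is correct and is exactly the argument the paper intends: the corollary is stated immediately after Theorem \ref{singularnotcompact} precisely because that theorem disposes of the case $\kappa^{<\kappa}>\kappa$ (where weak compactness also fails, since weakly compact cardinals are inaccessible), while the case $\kappa^{<\kappa}=\kappa$ is the cited Theorem 5.1 of \cite{BuhagiarDzamonja1}. Your write-up simply makes explicit the case split and the standard fact that inaccessibility gives $\kappa^{<\kappa}=\kappa$, both of which the paper leaves tacit.
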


Consider now the question of paracompactness of the products of the form ${}^{<\kappa}\square \,2^\lambda$. It seems that requiring some level of paracompactness of the products of the type ${}^{<\kappa}\square \,2^\lambda$  for $\lambda\ge\kappa$ should lead to a large cardinal property

\begin{definition}\label{} A cardinal $\kappa$ is {\em $\lambda$-paracompact} for a cardinal $\lambda\ge\kappa$ if the space ${}^{<\kappa}\square \,2^\lambda$ is paracompact. We say that
$\kappa$ is {\em paracompact} if it is $\lambda$-paracompact for every $\lambda\ge\kappa$.
 \end{definition}

For $\kappa=\lambda$ we obtain from Theorem \ref{big}(a) that $\kappa^{<\kappa}=\kappa$ suffices for 
${}^{<\kappa}\square \,2^\kappa$ to be paracompact. In fact just $\kappa$ being regular is enough, as shown by the following Theorem \ref{regular}.
In that theorem we shall use an auxiliary notion which will also be helpful in some further arguments.

\begin{definition}\label{b-paracompact} A topological space is {\em b-ultraparacompact} if every open cover has an open refinement consisting of disjoint basic clopen sets.
\end{definition}

Clearly, being b-ultraparacompact implies being ultraparacompact and hence paracomopact. The definition of
b-paracompactness tacitly assumes that we are working with some fixed base. In the case that will interest us the most, which are products of the
form ${}^{<\kappa}\square \,2^\lambda$, the base we have in mind is the canonical base consisting of sets of the form $[s]=\{f\in 2^\lambda:
\,s\subseteq f\}$, where $s$ is a partial function from $\lambda$ to $2$ whose domain has size $<\kappa$.

\begin{theorem}\label{regular} Suppose that $\kappa$ is a regular cardinal. Then ${}^{<\kappa}\square \,2^\kappa$ is paracompact, moreover it is 
b-ultraparacompact.
\end{theorem}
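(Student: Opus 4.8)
The plan is to exhibit an explicit $\kappa$-metrisability structure on $X={}^{<\kappa}\square\,2^\kappa$ and then invoke Claim \ref{vanDouwenCl2}, which converts a witness to $\kappa$-metrisability by basic clopen sets directly into a disjoint basic clopen refinement of an arbitrary open cover --- that is, exactly the b-ultraparacompactness we want (which in turn gives ultraparacompactness and hence paracompactness). So the whole theorem reduces to checking Definition \ref{def:kappametrisable} for a suitable $U$.

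First I would define, for $f\in 2^\kappa$ and $\alpha<\kappa$, the set $U(f,\alpha)=[f\rest\alpha]$, i.e.\ the basic clopen set of all $g\in 2^\kappa$ agreeing with $f$ below $\alpha$. Since $\alpha<\kappa$, each $U(f,\alpha)$ has support of size $<\kappa$ and so belongs to the canonical base. The crucial step is verifying condition (1) of Definition \ref{def:kappametrisable}, namely that $\{U(f,\alpha):\alpha<\kappa\}$ is a neighbourhood base at $f$. A basic open neighbourhood of $f$ has the form $[f\rest A]$ for some $A\in[\kappa]^{<\kappa}$, and here is where regularity of $\kappa$ enters: $A$ is bounded, so $A\subseteq\alpha$ for some $\alpha<\kappa$, whence $U(f,\alpha)=[f\rest\alpha]\subseteq[f\rest A]$. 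This is precisely the point that fails for singular $\kappa$, where a set of size $<\kappa$ may be cofinal in $\kappa$ and no initial segment suffices --- matching the hypothesis of the theorem.

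The remaining conditions (2)(a) and (2)(b) are routine. For (2)(a), if $g\in U(f,\alpha)$ and $\alpha\le\beta$, then $g\rest\alpha=f\rest\alpha$, and any $h$ agreeing with $g$ below $\beta$ agrees with $f$ below $\alpha$, so $U(g,\beta)\subseteq U(f,\alpha)$. For (2)(b), if $g\notin U(f,\alpha)$, i.e.\ $g\rest\alpha\neq f\rest\alpha$, then no $h$ can simultaneously agree with $g$ below some $\beta\ge\alpha$ and with $f$ below $\alpha$, so $U(g,\beta)\cap U(f,\alpha)=\emptyset$. Having checked all of Definition \ref{def:kappametrisable} with basic clopen sets $U(f,\alpha)$, Claim \ref{vanDouwenCl2} produces a disjoint basic clopen refinement of any given open cover, establishing b-ultraparacompactness and hence paracompactness.

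The only genuine obstacle is the neighbourhood-base verification in condition (1), which is really the single observation that regularity makes initial segments cofinal among the index sets of size $<\kappa$. Once this is noticed, the entire argument is a direct translation of the ultrametric reasoning recorded in Example \ref{omegameteic} and the van Douwen-style refinement of Claim \ref{vanDouwenCl2}, with no further calculation required.
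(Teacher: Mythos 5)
Your proposal is correct and is essentially the paper's own argument: the paper works directly with the level-$\alpha$ partitions $\UU_\alpha=\{[f\rest\alpha]:f\in{}^\alpha 2\}$ and, for each point, selects the least level whose cylinder fits inside a cover element, which is exactly the content of your $\kappa$-metrisability witness $U(f,\alpha)=[f\rest\alpha]$ combined with the map $\rho$ from Claim \ref{vanDouwenCl2}. The only difference is packaging --- you route the refinement step through Claim \ref{vanDouwenCl2} rather than carrying it out by hand --- and the paper itself notes this identification later, in the proof of Theorem \ref{boundedmetrisable}, where it remarks that the $\UU_\alpha$'s witness the $\kappa$-metrisability of ${}^{<\kappa}\square\,2^\kappa$ for regular $\kappa$.
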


\begin{proof} For each $\alpha<\kappa$ we define
\[
\UU_\alpha=\big\{(\prod_{\beta<\alpha} \{f (\beta)\}\times \prod_{\alpha\le\beta<\kappa} \{0,1\}):\;f\in {}^\alpha 2\big\}.
\]
Hence each $\UU_\alpha$ consists of pairwise disjoint basic clopen sets in $X={}^{<\kappa}\square \,2^\kappa$ and it is a cover of $X$.
Note that if $\beta<\alpha<\kappa$, $U\in \VV_\beta$ and $V\in \VV_\alpha$, then either we have $U\cap V=\emptyset$, or $V \subseteq U$. 

Let $\OO$ be any open cover of $X$ and 
for $\alpha<\kappa$ let 
\[
\VV_\alpha=\{ U\in \UU_\alpha:\,(\exists O\in \OO)\,U\subseteq O\}.
\]
Define for $\alpha<\kappa$
\[
\WW_\alpha=\{ V\in \VV_\alpha:\,(\forall U\in  \bigcup_{\beta<\alpha}\VV_\beta)\,V \nsubseteq U\}=\{ V\in \VV_\alpha:\,(\forall U\in  \bigcup_{\beta<\alpha}\VV_\beta)\,V \cap U=\emptyset\}.
\]
Let $\WW= \bigcup_{\alpha<\kappa} \WW_\alpha$, so $\WW$ is an open refinement of $\OO$ and it consists of disjoint basic clopen sets. We claim that
$\WW$ is a cover of $X$. Given $x\in X$, let $O\in \OO$ be such that $x\in O$. By the definition of the topology on $X$, there exists $\alpha<\kappa$
such that $V=\prod_{\beta<\alpha} \{x (\beta)\}\times \prod_{\alpha\le\beta<\kappa} \{0,1\}\subseteq O$. Hence $V \in \VV_\alpha$ contains $x$.
By the definition of $\WW_\alpha$, either $V\in \WW_\alpha$, or there exists $U\in \bigcup_{\beta<\alpha}\VV_\beta$ such that $V\subseteq U$. Let
$\beta$ be the least $\beta<\alpha $ such that there is $U\in \VV_\beta$ with $V\subseteq U$. Hence $U\in \WW_\beta$. In any case, we have found
an element of $\WW$ which contains $x$.
$\eop_{\ref{regular}}$
\end{proof}

We conclude that there is no large cardinal or even cardinal arithmetic property involved in the paracompactness of ${}^{<\kappa}\square \,2^\kappa$ for regular $\kappa$.
Since being $(<\kappa)$-compact implies being $(<\kappa^+)$-compact and every space of the form ${}^{<\kappa}\square \,2^\lambda$ for 
$\lambda\ge\kappa$ is regular and $(<\kappa)$-open whenever $\kappa$ is a regular cardinal, we obtain by Lemma \ref{notchtheorem} that every strongly compact cardinal is paracompact. In fact, we shall show that it is b-ultraparacompact.

\begin{theorem}\label{stronglycompactimpliesbparacompact} Suppose that $\kappa$ is a strongly compact cardinal. Then for every $\lambda\ge\kappa$, the space ${}^{<\kappa}\square \,2^\lambda$ is b-ultraparacompact.
\end{theorem}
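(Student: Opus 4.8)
The plan is to use strong compactness only through the characterisation recalled above: for a strongly compact $\kappa$ and every $\lambda\ge\kappa$, the space $X={}^{<\kappa}\square\,2^\lambda$ is $(<\kappa)$-compact (Theorem 2.10 in \cite{BuhagiarDzamonja1}). The argument then parallels the proof of Theorem \ref{regular}, with the role of the initial segments $\alpha<\kappa$ (available only when $\lambda=\kappa$) played by an arbitrary ``small'' coordinate set that compactness hands us.

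So let $\OO$ be an open cover of $X$. First I would refine $\OO$ to a cover $\OO^\ast$ by basic clopen sets: since the sets $[s]$ with $|\dom(s)|<\kappa$ form a base of clopen sets for the $(<\kappa)$-box topology on $2^\lambda$, each $x\in X$ lies in some $[s_x]\subseteq O$ for an $O\in\OO$, and $\OO^\ast=\{[s_x]:x\in X\}$ is as required. Next, applying $(<\kappa)$-compactness to $\OO^\ast$, I extract a subcover $\{[s_i]:i<\mu\}$ with $\mu<\kappa$, each $[s_i]$ contained in some member of $\OO$. Put $S=\bigcup_{i<\mu}\dom(s_i)$; since $\kappa$ is regular, $\mu<\kappa$, and each $|\dom(s_i)|<\kappa$, we get $|S|<\kappa$.

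The final step is the ``atoms over $S$'' construction. Consider the family $\WW=\{[t]:t\in{}^S 2\}$ obtained by fixing all coordinates in $S$. Each $[t]$ is basic clopen with $\supt([t])=S$ of size $<\kappa$; distinct $t$ give disjoint $[t]$; and every $f\in X$ lies in $[f\rest S]$, so $\WW$ is a pairwise disjoint basic clopen cover of $X$. Finally, each cell of $\WW$ refines $\OO$: given $t$, pick any $f$ with $f\rest S=t$ and an $i<\mu$ with $f\in[s_i]$; since $\dom(s_i)\subseteq S$, this forces $t\rest\dom(s_i)=s_i$, whence every $g$ with $g\rest S=t$ lies in $[s_i]$, i.e. $[t]\subseteq[s_i]\subseteq O$ for the corresponding $O\in\OO$. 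Thus $\WW$ witnesses b-ultraparacompactness.

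I would emphasise that there is no real obstacle here: the entire substance is the reduction to a subcover of size $<\kappa$, i.e. $(<\kappa)$-compactness, and regularity of $\kappa$ then confines all relevant coordinates to a set $S$ of size $<\kappa$. After that the ``atoms over $S$'' trick is purely combinatorial and needs no further cardinal arithmetic — in particular not even the strong-limit property that $\kappa$ enjoys, since disjointness of $\WW$ gives local finiteness for free. The only place to be mildly careful is to justify refining $\OO$ to basic clopen sets \emph{before} invoking compactness, which is exactly zero-dimensionality of the $(<\kappa)$-box topology on $2^\lambda$.
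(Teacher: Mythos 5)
Your proof is correct, and it follows the same overall strategy as the paper's: reduce via $(<\kappa)$-compactness (which is what strong compactness buys) to a subcover $\{[s_i]:i<\mu\}$ with $\mu<\kappa$ consisting of basic clopen sets, and then decompose combinatorially over a coordinate set of size $<\kappa$. The only difference is in how that last step is organised. The paper first disjointifies by setting $V_\alpha=[s_\alpha]\setminus\bigcup_{\beta<\alpha}[s_\beta]$ (invoking $(<\kappa)$-openness to see these are clopen) and then writes each $V_\alpha$ as a disjoint union of cylinders over $\bigcup_{\beta\le\alpha}\dom(s_\beta)$, the cylinders being required to avoid each earlier $s_\beta$ and extend $s_\alpha$. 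You instead skip the disjointification entirely and take the single uniform partition of $X$ into all atoms $[t]$, $t\in{}^S2$, over $S=\bigcup_{i<\mu}\dom(s_i)$, checking that each atom sits inside some $[s_i]$. Your version is slightly cleaner (one partition, no ordering of the subcover, no appeal to $(<\kappa)$-openness), at the mild cost of producing a finer refinement than necessary; both hinge on the same two facts, namely that $\mu<\kappa$ and that $\kappa$ is regular so that $|S|<\kappa$.
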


\begin{proof} Let $\kappa$ be a strongly compact cardinal, $\lambda\ge\kappa$ and let $\UU$ be an open cover of $X={}^{<\kappa}\square \,2^\lambda$.
We may refine it so that it without loss of generality consists of basic clopen sets. Since $\kappa$ is strongly compact, $X$ is $(<\kappa)$-compact and hence we can assume by passing to a subcover if necessary, that $|\UU|<\kappa$. We can enumerate $\UU=\{U_\alpha:\,\alpha<\mu\}$ for
some $\mu<\kappa$ and for each $\alpha<\mu$ let $V_\alpha\deq U_\alpha\setminus \bigcup_{\beta<\alpha} U_\beta$, which are clopen since $X$ is
$(<\kappa)$-open.

Clearly $\alpha\neq\beta\implies V_\alpha\cap V_\beta=\emptyset$ and all we need to show is that each $V_\alpha$ can be written as the union
of disjoint basic clopen sets. Let us denote by $s_\alpha$ the partial function such that $U_\alpha=[s_\alpha]$, for each $\alpha$. 
Then:
\begin{equation*}
U_\alpha\setminus \bigcup_{\beta<\alpha} U_\beta=
\bigcup \{[f]: \,f\,:\bigcup_{\beta\le\alpha} \dom(s_\beta)\to 2\mbox{ satisfies }(i) \mbox{ and } (ii) \mbox{ below}\}:
\end{equation*}
\begin{description}
\item{\em (i)} $(\forall\beta<\alpha)(\exists i\in \dom(s_\beta)\,f(i)\neq
s_\beta(i)$,
\item{\em (ii)} $(\forall i \in \dom (s_\alpha))f(i)=s_\alpha(i)$.
\end{description}

which is the union of a family of pairwise disjoint basic clopen sets.
$\eop_{\ref{stronglycompactimpliesbparacompact}}$
\end{proof}


To obtain a lower bound for the paracompactness of a cardinal, we shall first show that no (infinite) successor cardinal can be paracompact. The following
lemma will be useful.

\begin{lemma}\label{F:1} The ($<\kappa$)-box product is associative iff $\kappa$ 
is a regular cardinal, by which we mean that for any cardinal $\lambda$, any spaces $\langle X_\alpha:\,\alpha<\lambda\rangle$ and
a partition $\lambda=\bigcup_{\gamma\in\Gamma} I_\gamma$, we have
\[
{}^{<\kappa}\bigbox_{\alpha<\lambda} X_\alpha \text{ is homeomorphic to } {}^{<\kappa}\bigbox_{\gamma\in\Gamma}\left(
{}^{<\kappa}\bigbox_{\alpha\in I_\gamma} X_\alpha\right).
\]
\end{lemma}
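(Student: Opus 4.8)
The plan is to fix the obvious bijection $\phi$ between the underlying sets of the two products and to show that it is a homeomorphism for all data precisely when $\kappa$ is regular. A point of ${}^{<\kappa}\bigbox_{\alpha<\lambda} X_\alpha$ is a function $x$ with $x(\alpha)\in X_\alpha$, while a point of the iterated product is a family $(y_\gamma)_{\gamma\in\Gamma}$ with $y_\gamma\in{}^{<\kappa}\bigbox_{\alpha\in I_\gamma}X_\alpha$; since the $I_\gamma$ partition $\lambda$, the assignment $x\mapsto (x\rest I_\gamma)_{\gamma\in\Gamma}$ is a bijection, so only the two topologies need comparing. First I would record a convenient base for the iterated product: since products of basic open sets form a base of each inner factor ${}^{<\kappa}\bigbox_{\alpha\in I_\gamma}X_\alpha$, refining the inner factors of an outer basic open yields that, under $\phi$, a base for the iterated product consists of the sets $\prod_{\alpha<\lambda}O_\alpha$ whose support $S=\supt(\prod_\alpha O_\alpha)=\{\alpha:\,O_\alpha\neq X_\alpha\}$ satisfies (i) $|S\cap I_\gamma|<\kappa$ for every $\gamma$ and (ii) $|\{\gamma:\,S\cap I_\gamma\neq\emptyset\}|<\kappa$; call this condition $(\star)$ on $S$. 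By contrast, the basic open sets of ${}^{<\kappa}\bigbox_{\alpha<\lambda}X_\alpha$ are exactly the $\prod_\alpha O_\alpha$ with $|S|<\kappa$.

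The second step reduces everything to a statement about supports. Whenever $|S|<\kappa$ then $(\star)$ holds trivially, since both $|S\cap I_\gamma|$ and the number of meeting blocks are bounded by $|S|$; hence every left-basic set is right-basic and the left topology is always contained in the right one. For the reverse inclusion I would observe that, provided the factors indexed by $S$ are nontrivial, a product $\prod_\alpha O_\alpha$ with support $S$ is open in ${}^{<\kappa}\bigbox_{\alpha<\lambda}X_\alpha$ if and only if $|S|<\kappa$: any basic neighbourhood of a point inside it must use a proper open set in every coordinate of $S$, forcing its own support to contain $S$, which is impossible once $|S|\ge\kappa$. Consequently $\phi$ is a homeomorphism for all choices of spaces iff every $S$ satisfying $(\star)$ has $|S|<\kappa$, for every partition of every $\lambda$.

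It then remains to prove that this combinatorial condition characterises regularity. If $\kappa$ is regular and $(\star)$ holds, then writing $S=\bigcup\{S\cap I_\gamma:\,S\cap I_\gamma\neq\emptyset\}$ exhibits $S$ as a union of fewer than $\kappa$ sets each of size less than $\kappa$, so $|S|<\kappa$ by regularity, giving the forward implication (and in fact making the two bases coincide). For the converse, if $\kappa$ is singular with $\cf(\kappa)=\theta<\kappa$, I would take $\lambda=\kappa$, fix an increasing cofinal sequence $\langle\kappa_i:\,i<\theta\rangle$ below $\kappa$, partition $\kappa$ into consecutive blocks $I_i$ of size $\kappa_i$, and let every $X_\alpha=\{0,1\}$. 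Then $S=\kappa$ satisfies $(\star)$, because each block meets $S$ in $\kappa_i<\kappa$ points and there are only $\theta<\kappa$ blocks; so the singleton consisting of the constant $0$ function is open on the right, being a product of $\theta$ many inner-basic singletons, yet it is not open on the left, witnessing that the two topologies differ.

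The part I expect to require the most care is the middle step: verifying that products whose support has size $\ge\kappa$ genuinely fail to be open in the $(<\kappa)$-box topology, and cleanly justifying that the $(\star)$-supports are precisely the supports of a base for the iterated product, i.e. the refinement from arbitrary inner open factors to basic ones. Everything after that is the one-line regularity computation and the explicit singular counterexample.
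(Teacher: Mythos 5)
Your proposal is correct and follows essentially the same route as the paper: the natural bijection, the observation that every left-basic set is right-basic for free, and the regularity of $\kappa$ to bound the support of the preimage of an iterated basic open set. You in fact go a little further than the paper in the singular direction, where the text only remarks that counterexamples are ``easy to construct'': your explicit partition of a singular $\kappa$ into $\cf(\kappa)$ many blocks of size $<\kappa$, together with the verification that a product whose support has size $\ge\kappa$ cannot be open in the $(<\kappa)$-box topology, completes that half cleanly.
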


\begin{proof} Let $p: \prod_{\alpha<\lambda} X_\alpha \to 
\prod_{\gamma\in\Gamma}\left(\prod_{\alpha\in I_\gamma} X_\alpha\right)$ be the natural bijection, we shall show that it is a homeomorphism. Suppose that
$U = \prod_{\alpha<\lambda}U_\alpha$ is a basic open set in ${}^{<\kappa}\bigbox_{\alpha<\lambda} X_\alpha$. Then clearly for every $\gamma\in\Gamma$
we have that $|\supt(U)\cap I_\gamma|<\kappa$, so we can conclude that $p(U)$ is a basic open set in
${}^{<\kappa}\bigbox_{\gamma\in\Gamma}\left(
{}^{<\kappa}\bigbox_{\alpha\in I_\gamma} X_\alpha\right)$. Hence $p$ is an open function and we still need to show that it is continuous. It is in this part of the
proof that we shall use the regularity of $\kappa$. 

Suppose that $U = \prod_{\gamma\in\Gamma}
\left(\prod_{\alpha\in I_\gamma} U_\alpha\right)$ is a basic open set in the space ${}^{<\kappa}\bigbox_{\gamma\in\Gamma}\left(
{}^{<\kappa}\bigbox_{\alpha\in I_\gamma} X_\alpha\right)$.
Let $U^\gamma = \prod_{\alpha\in I_\gamma} U_\alpha$, so $\supt(U^\gamma)|<\kappa$ for each $\gamma\in\Gamma$. Notice also that
$|\{\gamma\in \Gamma:\,\supt(U_\gamma)\neq\emptyset\}|<\kappa$, so by the regularity of $\kappa$,
we have that 
$|\supt(p^{-1}(U))|<\kappa$, and so $p^{-1}(U)$ is a basic open set in ${}^{<\kappa}\bigbox_{\alpha<\lambda} X_\alpha$.

Note that when $\kappa$ is singular, it is easy to construct examples where
$|\supt(U^\gamma)|<\kappa$ for each $\gamma\in\Gamma$ and $|\{\gamma\in\Gamma:\,\supt(U^\gamma)\neq\emptyset\}|<\kappa$ but $|\supt(f^{-1}(U))| = \kappa$.
$\eop_{\ref{F:1}}$
\end{proof}

\begin{theorem}\label{nosucessor} No infinite successor cardinal is paracompact. In fact, there is no infinite cardinal $\kappa$ such that the space 
${}^{<\kappa^+}\!\square 2^{\kappa^{++}}$ is normal.
\end{theorem}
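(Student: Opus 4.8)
The plan is to reduce the paracompactness statement to the normality (``In fact'') statement and then prove the latter. If an infinite successor cardinal $\kappa^+$ were paracompact in the sense of the definition above, then taking $\lambda=\kappa^{++}\ge\kappa^+$ the space ${}^{<\kappa^+}\square\, 2^{\kappa^{++}}$ would be paracompact, hence normal (every paracompact space is normal, as noted after Dieudonn\'e's theorem). So it suffices to prove that for \emph{every} infinite cardinal $\kappa$ the space $X={}^{<\kappa^+}\square\, 2^{\kappa^{++}}$ fails to be normal. For $\kappa=\aleph_0$ this is exactly van Douwen's Theorem B of \cite{VANDOUWEN197771} (used already in Theorem \ref{ex:negative}); the task is to carry his argument to arbitrary $\kappa$. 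Throughout, a basic clopen set is $[s]$ for a partial $s:\kappa^{++}\to 2$ with $|\dom(s)|\le\kappa$, and $\supt([s])=\dom(s)$ has size $\le\kappa$.

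The engine of the proof is a set-mapping (free-pair) argument. First I would isolate the combinatorial core: \emph{any set-mapping $F:\kappa^{++}\to[\kappa^{++}]^{\le\kappa}$ admits distinct $\alpha,\beta$ with $\alpha\notin F(\beta)$ and $\beta\notin F(\alpha)$.} This follows from Hajnal's free-set theorem, since a set-mapping of order $<\kappa^+$ on the regular cardinal $\kappa^{++}=(\kappa^+)^+$ has a free set of full size; it is precisely here that the \emph{second} successor is needed (on $\kappa^+$ the map $F(\alpha)=\alpha$ has order $\le\kappa$ yet no free pair, matching the fact that ${}^{<\kappa^+}\square\, 2^{\kappa^+}$ is $\kappa^+$-metrisable, hence paracompact, by Theorem \ref{P:G2}(i)). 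Granting two disjoint closed sets $A,B\subseteq X$ to be constructed below, I would argue by contradiction: if $U\supseteq A$ and $V\supseteq B$ are disjoint open sets, choose for each $a\in A$ a basic neighbourhood $[a\restriction w_a]\subseteq U$ with $|w_a|\le\kappa$, and for each $b\in B$ a basic $[b\restriction v_b]\subseteq V$. The aim is to produce $a\in A$, $b\in B$ that agree on $w_a\cap v_b$: then $[a\restriction w_a]\cap[b\restriction v_b]\neq\emptyset$, so $U\cap V\neq\emptyset$, a contradiction. If $A,B$ are set up so that $a$ and $b$ disagree only on coordinates lying in a controlled two-element (or bounded) set depending on the indices, the free-pair lemma applied to $F(\alpha)=w_\alpha\cup v_\alpha$ delivers exactly a pair whose disagreement coordinates avoid $w_a\cap v_b$.

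The delicate part — and the step I expect to be the main obstacle — is the construction of the two disjoint \emph{closed} sets $A,B$ with the required confinement of disagreements. The naive choice of ``unit perturbations'' of a fixed base point $c$ (e.g.\ $a_\alpha=c$ with one bit flipped) makes the free-pair mechanism work instantly, but it fails the topological requirement: since a support of size $\le\kappa$ cannot detect a single flipped coordinate among $\kappa^{++}$ of them, the base point $c$ lies in the closure of \emph{both} families, so the closures are not disjoint and we do not even have two disjoint closed sets. Hence the perturbations must be globally large (unbounded in $\kappa^{++}$) so that small-support neighbourhoods \emph{do} separate $A$ from $B$ and keep the closures disjoint, while still being locally avoidable so that a free pair forces neighbourhoods to meet. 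Reconciling these opposing demands is the heart of van Douwen's theorem, and I would do it by a transfinite recursion along $\kappa^{++}$ supported by a stationary/ladder system on $\{\delta<\kappa^{++}:\cf(\delta)>\kappa\}$, using that $\cf(\kappa^{++})=\kappa^{++}>\kappa$ forces every support to be bounded and to concentrate (by pressing down) below a fixed ordinal; the diagonal coordinates of the recursion are then the disagreement set to which the free-pair lemma is applied. Once $A$ and $B$ are in hand, the contradiction above finishes the proof, and since normality is inherited by closed subspaces this rules out normality — and a fortiori paracompactness — of ${}^{<\kappa^+}\square\, 2^{\kappa^{++}}$ for every infinite $\kappa$, establishing $\eop_{\ref{nosucessor}}$ both claims.
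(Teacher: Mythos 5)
Your reduction of the paracompactness claim to the normality claim is correct and agrees with the paper, and your diagnosis of why the second successor is needed (the set--mapping $F(\alpha)=\alpha$ on $\kappa^+$ has no free pair, while every $F:\kappa^{++}\to[\kappa^{++}]^{\le\kappa}$ does by Hajnal's theorem) is sound. But the argument has a genuine gap exactly where you flag it: the two disjoint closed sets $A,B\subseteq{}^{<\kappa^+}\square\,2^{\kappa^{++}}$ that cannot be separated are never constructed. Everything after ``the delicate part'' is a list of desiderata (perturbations that are ``globally large'' yet ``locally avoidable'', a recursion ``supported by a stationary/ladder system'') rather than a construction, and the free-pair mechanism cannot be run until $A$ and $B$ exist. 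As you yourself observe, the obvious candidates fail because a support of size $\le\kappa$ cannot detect a single flipped bit among $\kappa^{++}$ coordinates; what is missing is any candidate that succeeds.

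The paper sidesteps this obstacle entirely, and the missing idea is worth noting. Partition $\kappa^{++}=\bigcup_{\gamma<\kappa^{++}}I_\gamma$ into blocks with $|I_\gamma|=\kappa$. By the associativity of the $(<\kappa^+)$-box product for the regular cardinal $\kappa^+$ (Lemma \ref{F:1}), the space ${}^{<\kappa^+}\square\,2^{\kappa^{++}}$ is homeomorphic to the $(<\kappa^+)$-box product, over $\gamma<\kappa^{++}$, of the full box powers ${}^{<\kappa^+}\bigbox_{\alpha\in I_\gamma}2$; each of these is a \emph{discrete} space of cardinality $2^\kappa$, which is also the cardinality of the discrete space ${}^{<\kappa^+}\bigbox_{\alpha\in I_\gamma}\kappa^+$. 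Hence ${}^{<\kappa^+}\square\,2^{\kappa^{++}}$ is homeomorphic to ${}^{<\kappa^+}\square\,(\kappa^+)^{\kappa^{++}}$, and the latter is not normal by Borges's theorem (Theorem 2 of \cite{MR253274}) on ${}^{<\theta}\square\,\theta^\lambda$ for regular $\theta<\lambda$. It is precisely the passage to a discrete factor of size $\kappa^+$ that makes the Stone-type unseparable closed sets (and your ``globally large perturbations'') available; without either this homeomorphism or an explicit construction of $A$ and $B$ inside $2^{\kappa^{++}}$ itself, your proof is incomplete.
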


\begin{proof} The proof is a generalisation of a proof of van Douwen in \cite{VANDOUWEN197771} , whose Theorem B proves the latter statement in the case $\kappa=\aleph_0$. As van Douwen does not give many details, we give a detailed proof.

Borges proves in Theorem 2 of \cite{MR253274} that if $\theta$ is an inifinite regular cardinal, then the space
${}^{<\theta}\bigbox\, \theta^\lambda$ is not normal,  for any $\lambda>\theta$ (here $\theta$ is given the discrete topology). In particular, 
${}^{<\kappa^+}\!\bigbox (\kappa^+)^{\kappa^{++}}$ is not normal, and hence not paracompact. We shall proceed to observe that ${}^{<\kappa^+}\!\bigbox2^{\kappa^{++}}$
and ${}^{<\kappa^+}\!\bigbox (\kappa^+)^{\kappa^{++}}$ are homeomorphic, and then the conclusion will follow.

Let us partition $\kappa^{++} = \bigcup_{\gamma<\kappa^{++}}I_\gamma$, where each $|I_\gamma|=\kappa$ and the sets
$\{I_\gamma:\,\gamma<\kappa^{++}\}$ are pairwise disjoint. Then, by Lemma \ref{F:1}, ${}^{<\kappa^+}\!\bigbox 2^{\kappa^{++}}$ is homeomorphic to 
${}^{<{\kappa^+}}\bigbox_{\gamma<\kappa^{++}}\left({}^{<\kappa^+}\bigbox_{\alpha\in I_\gamma} 2\right)$ and 
 ${}^{<\kappa^+}\bigbox (\kappa^+)^{\kappa^{++}}$ is homeomorphic to ${}^{<\kappa^+}\bigbox_{\gamma<\kappa^{++}}\left({}^{<\kappa^+}\bigbox_{\alpha\in I_\gamma} \kappa^+\right)$. Now note that each ${}^{<\kappa^+}\bigbox_{\alpha\in I_\gamma} 2$ is simply a discrete space of size $2^\kappa$,
and so is ${}^{<\kappa^+}\bigbox_{\alpha\in I_\gamma} \kappa^+$, so they are all homeomorphic. Hence ${}^{<\kappa^+}\!\bigbox2^{\kappa^{++}}$
and ${}^{<\kappa^+}\!\bigbox (\kappa^+)^{\kappa^{++}}$ are homeomorphic.
$\eop_{\ref{nosucessor}}$
\end{proof}

We do not know if ${}^{<\kappa} \,\bigbox 2^\lambda$ can be paracompact for any $\lambda>\kappa$. In addition, what we know leaves open the case of singular cardinals.
We would quite like to prove the following Conjecture \ref{nosingular}, which would then, together with the results earlier in this section, demonstrate that 
paracompactness is indeed a large cardinal notion. 

\begin{conjecture}\label{nosingular} No singular cardinal $\kappa$ is paracompact. Moreover, not even the space ${}^{<\kappa}\square\, 2^\kappa$ is paracompact.
\end{conjecture}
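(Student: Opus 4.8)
The plan is to argue by contradiction and to exploit the one structural feature that genuinely distinguishes the singular case. Write $\theta=\cf(\kappa)$ and fix an increasing continuous sequence $\langle\kappa_i:i<\theta\rangle$ cofinal in $\kappa$ with $\kappa_0=\theta$. The first thing to record is that, unlike in the regular case treated in Theorem \ref{regular}, the space $X={}^{<\kappa}\square\,2^\kappa$ is \emph{not} $(<\kappa)$-open in the sense of Definition \ref{def:kappaopen}: the intersection of the $\theta$ many basic clopen sets whose supports are the blocks $[\kappa_i,\kappa_{i+1})$ specifies $\kappa$ coordinates and so is not basic open. Thus $X$ is only $(<\theta)$-open, and the principal positive tool of the paper, Lemma \ref{notchtheorem} together with Corollary \ref{C:PUP}, is simply unavailable here; this failure is exactly what one should hope to convert into a proof of non-paracompactness, and it is also why the $\nabla/\square$ machinery of \S\ref{sec:Sikorski} cannot be run.

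The primary line of attack is to reduce to a known non-normality theorem, in the spirit of the proof of Theorem \ref{nosucessor}. Partition $\kappa=\bigsqcup_{i<\theta}B_i$ with $|B_i|=\kappa_i$; the non-associativity recorded in Lemma \ref{F:1} says that, at the level of the $\theta$ blocks, the box topology behaves like a \emph{full} $\theta$-box product. I would first try to isolate, as a closed subspace of $X$, a set $Y$ on which the induced topology is a box product of $\theta$ discrete factors $D_i$ of size $2^{\kappa_i}\ge\theta^+$, and then to locate inside $Y$ a closed copy of a product of the form ${}^{<\theta}\square\,\theta^\lambda$ with $\lambda>\theta$, which is not normal by Borges \cite{MR253274} when $\theta$ is regular, hence not paracompact. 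Since normality and paracompactness both pass to closed subspaces, this would finish the argument; run with van Douwen's Theorem~B of \cite{VANDOUWEN197771} it would even give the stronger ``moreover'' clause.

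The main obstacle, and the reason this remains a conjecture, is that the \emph{obvious} closed subspaces obtained this way are in fact paracompact. If one codes $D_i=2^{\kappa_i}$ by the full set of coordinates in $B_i$, then a basic open set of $X$ can fully determine only boundedly many blocks, because its support has size $<\kappa$ and $\cf(\kappa)=\theta$; consequently the induced topology on $\prod_{i<\theta}D_i$ is the \emph{tree} (initial-segment) topology, with basic neighbourhoods $U(x,j)=\{y:y\restriction\bigcup_{i<j}B_i=x\restriction\bigcup_{i<j}B_i\}$. Exactly as in Example \ref{omegameteic} and Claim \ref{vanDouwenCl2}, the assignment $x\mapsto\langle U(x,j):j<\theta\rangle$ witnesses that this subspace is $\theta$-metrisable in the sense of Definition \ref{def:kappametrisable}, hence ultraparacompact, so no contradiction arises. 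Any successful argument must therefore avoid subspaces whose induced topology collapses to such a tree, and must instead use the genuinely flat $(<\kappa)$-box combinatorics in which support sizes are allowed to approach $\kappa$ cofinally.

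Accordingly, the step I expect to be decisive is a direct construction of a ``bad'' open cover. I would seed it with the disjoint cover $\mathcal{U}=\bigcup_{i<\theta}\mathcal{U}_i$ of size $\ge\kappa$ produced in Theorem \ref{singularnotcompact}, whose members have supports of growing size $\kappa_{i^\ast}$, and then thicken and interleave these sets so that any open refinement is forced, at a suitable ``diagonal'' point $f$, to decide coordinates in cofinally many of the blocks; since a basic neighbourhood of $f$ can decide coordinates in only a bounded set of blocks, a Fodor-style pressing-down argument across the $\theta$ blocks should then contradict local finiteness. Making this interleaving precise, so that the cover is honestly open yet admits no locally finite open refinement in the sense of Definition \ref{def:paracompact}, is the crux; this is where I expect the combinatorial characterisation of \S\ref{sec:combo} to be needed, reducing the task to showing that the relevant refinement principle fails for every singular $\kappa$, plausibly via a scale or $\mathrm{pcf}$-theoretic witness at $\cf(\kappa)$. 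The case $\cf(\kappa)=\omega$ will be the most delicate, since there one must explain precisely why the metric intuition that makes the countable tree space paracompact does not also rescue the full flat product.
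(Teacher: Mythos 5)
The statement you are addressing is labelled a \emph{conjecture} in the paper, and the authors state explicitly that they are unable to prove it; they offer only partial results (Theorem \ref{singularnotcompact}, Theorem \ref{character}, Corollary \ref{character-notmetrisable}, and the combinatorial reformulation in Theorem \ref{combiparacompact}). Your proposal does not close the gap either: the one step that would actually establish non-paracompactness --- an open cover of ${}^{<\kappa}\square\,2^\kappa$ admitting no locally finite open refinement, or equivalently a dense $S\subseteq{\rm Fn}(\kappa,\kappa,2)$ with no dense antichain refinement in the sense of Theorem \ref{combiparacompact} --- is never constructed. You yourself call it ``the crux'' and defer it to an unspecified Fodor-style argument, so what you have is a research plan, not a proof. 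Your preliminary observations are sound and line up with the paper's own partial results: ${}^{<\kappa}\square\,2^\kappa$ is only $(<\cf(\kappa))$-open when $\kappa$ is singular, so Lemma \ref{notchtheorem} and the $\nabla$-machinery are unavailable, and the disjoint cover of Theorem \ref{singularnotcompact} is the natural seed for a counterexample.

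One of your intermediate claims is also wrong, and it distorts your diagnosis of why the reduction to Borges/van Douwen fails. If you re-coordinatise $2^\kappa$ as $\prod_{i<\theta}2^{B_i}$ with $|B_i|=\kappa_i$, the induced topology is \emph{not} the initial-segment (tree) topology: a basic box may constrain one coordinate in every block (support of size $\theta<\kappa$), which determines no block fully yet contains no set of the form $U(x,j)$. Were your claim correct, $X$ itself would be $\theta$-metrisable, contradicting Corollary \ref{character-notmetrisable} (which rests on Theorem \ref{character}: every point of $X$ has character $>\cf(\kappa)$). The correct version of the phenomenon you are gesturing at is the paper's Theorem \ref{boundedmetrisable}: it is the \emph{bounded} topology ${}^{<<\kappa}\square\,2^\kappa$, strictly coarser than ${}^{<\kappa}\square\,2^\kappa$ for singular $\kappa$, that is $\cf(\kappa)$-metrisable and b-ultraparacompact. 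Any successful attack must exploit precisely the difference between the full $(<\kappa)$-box topology and this bounded one, and that is exactly where the conjecture currently stands open.
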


We are not able to prove Conjecture \ref{nosingular}  at the moment, so we offer some partial results instead and connect them with results involving the so called bounded topology (which seems to be more often used in the literature when dealing with at singular cardinals).

\begin{theorem}\label{character} Suppose that $\kappa$ is a singular cardinal. Then for every $x\in X= {}^{<\kappa}\bigbox 2^{\kappa}$ we
have $\chi(x,X) >\cf(\kappa)$.
\end{theorem}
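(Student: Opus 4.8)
The plan is to reduce the computation of $\chi(x,X)$ to a purely combinatorial question about the partial order $([\kappa]^{<\kappa},\subseteq)$ and then to exploit the singularity of $\kappa$ through a one-point diagonalisation.

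First I would observe that, since the sets $[s]$ of Definition \ref{def:support} form a base for $X$, every local base at $x$ can be shrunk to one consisting of basic clopen neighbourhoods without increasing its cardinality (replace each neighbourhood by a basic clopen set squeezed between $x$ and it). The basic clopen neighbourhoods of $x$ are exactly the sets $[x\upharpoonright A]$ with $A\in[\kappa]^{<\kappa}$, and a direct check gives the order-reversal
\[
[x\upharpoonright A]\subseteq [x\upharpoonright B]\iff B\subseteq A.
\]
Consequently a family $\{[x\upharpoonright A_i]:i\in I\}$ is a local base at $x$ if and only if $\{A_i:i\in I\}$ is cofinal in $([\kappa]^{<\kappa},\subseteq)$, so that $\chi(x,X)$ equals the cofinality of this partial order. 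It therefore suffices to show that no subfamily of $[\kappa]^{<\kappa}$ of size $\cf(\kappa)$ is cofinal.

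For the main step I would set $\theta=\cf(\kappa)$ and argue by contradiction, supposing that $\{A_i:i<\theta\}\subseteq[\kappa]^{<\kappa}$ is cofinal. Since $\kappa$ is singular we have $\theta<\kappa$, and since each $|A_i|<\kappa=|\kappa|$ we may choose $\alpha_i\in\kappa\setminus A_i$ for every $i<\theta$. Putting $A=\{\alpha_i:i<\theta\}$, we get $|A|\le\theta<\kappa$, so $A\in[\kappa]^{<\kappa}$ and $[x\upharpoonright A]$ is a genuine basic open neighbourhood of $x$. But $\alpha_i\in A\setminus A_i$ witnesses $A\not\subseteq A_i$ for every $i$, so no $[x\upharpoonright A_i]$ is contained in $[x\upharpoonright A]$, contradicting cofinality. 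Hence $\chi(x,X)>\theta=\cf(\kappa)$.

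The only place where singularity enters, and the only genuine obstacle, is guaranteeing that the diagonal set $A$ is again an admissible index, i.e.\ that $|A|<\kappa$; this is exactly where $\cf(\kappa)<\kappa$ is used, and it is precisely this step that fails for regular $\kappa$, consistently with Theorem \ref{regular}. Everything else is the routine verification of the order-reversal and of the reduction to basic neighbourhoods, so once the combinatorial reformulation is in place the proof is immediate.
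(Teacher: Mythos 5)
Your proof is correct and takes essentially the same approach as the paper: the paper's argument likewise picks, for each of the $\cf(\kappa)$ given basic neighbourhoods $U_i$ of $x$, an ordinal $\alpha_i\notin\supt(U_i)$ and uses the single diagonalising neighbourhood $[x\rest\{\alpha_i:\,i<\cf(\kappa)\}]$, whose support has size $\le\cf(\kappa)<\kappa$ precisely because $\kappa$ is singular. Your explicit reformulation of $\chi(x,X)$ as the cofinality of $([\kappa]^{<\kappa},\subseteq)$ and the reduction to basic clopen neighbourhoods are left implicit in the paper but are only a repackaging of the same diagonalisation.
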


\begin{proof} Let $\theta=\cf(\kappa)$ and let $\langle \kappa_i:\,i<\theta\rangle$ be an increasing cofinal sequence in $\kappa$.
Suppose that $x\in X$ and $\{U_i:\,i<\theta\}$ are basic clopen sets containing $x$. By induction on $i<\theta$ choose
an increasing sequence $\langle \alpha_i:\,i<\theta\rangle$ such that $\alpha_i\notin \supt(U_i)$, which is possible since $i<\theta$ and $|\supt(U_i)|<\kappa$, for each $i$. Then let $O$ be the basic open set $[x\rest \{\alpha_i:\,i<\theta\}]$. This set clearly contains $x$ but it does not contain 
any $U_i$ as a subset, and hence the arbitrarily chosen sets $\{U_i:\,i<\theta\}$ do not form a neighbourhood base at $x$. Therefore
$\chi(x,X)>\theta$.
$\eop_{\ref{character}}$
\end{proof}

\begin{corollary}\label{character-notmetrisable} For a singular $\kappa$, the space ${}^{<\kappa}\square 2^{\kappa}$ is not $\cf(\kappa)$-metrisable.
\end{corollary}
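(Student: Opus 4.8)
The plan is to derive the non-metrisability as an immediate consequence of Theorem \ref{character}, exploiting the fact that any form of generalised metrisability places an a priori bound on the character of points. The single conceptual ingredient I need is the observation that if a space is $\theta$-metrisable in the sense of Definition \ref{def:kappametrisable} (applied with $\theta$ in place of $\kappa$), then every point has a neighbourhood base of cardinality at most $\theta$, and hence $\chi(x,X)\le\theta$ for all $x\in X$. This is read off directly from clause (1) of the definition: the witnessing function supplies, for each $x$, the family $\{U(x,\alpha):\alpha<\theta\}$, which is by hypothesis a neighbourhood base at $x$ and has size $\le\theta$.

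First I would argue by contradiction. Suppose that ${}^{<\kappa}\square\,2^\kappa$ is $\cf(\kappa)$-metrisable, witnessed by some function $U:X\times\cf(\kappa)\to\tau(X)$. Then for every point $x\in X$ the family $\{U(x,\alpha):\alpha<\cf(\kappa)\}$ is a neighbourhood base at $x$, so $\chi(x,X)\le\cf(\kappa)$. This contradicts Theorem \ref{character}, which asserts that $\chi(x,X)>\cf(\kappa)$ for every $x\in X$ when $\kappa$ is singular. Hence no such witnessing function exists, and $X$ is not $\cf(\kappa)$-metrisable.

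There is no genuine obstacle here: the entire mathematical content resides in Theorem \ref{character}, and this corollary merely repackages that character computation through the elementary remark that $\theta$-metrisability forces character $\le\theta$. The only point requiring any care is to make sure the character is compared against the correct index cardinal, namely $\theta=\cf(\kappa)$, so that clause (1) of Definition \ref{def:kappametrisable} yields exactly a base of size $\le\cf(\kappa)$ and the strict inequality of Theorem \ref{character} closes the contradiction. Accordingly I expect the proof to be one or two lines long.
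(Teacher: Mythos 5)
Your proposal is correct and is exactly the paper's argument: the corollary is stated to follow straight from Definition \ref{def:kappametrisable}, since clause (1) of $\cf(\kappa)$-metrisability gives every point a neighbourhood base of size $\le\cf(\kappa)$, which contradicts the bound $\chi(x,X)>\cf(\kappa)$ of Theorem \ref{character}. Nothing to add.
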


The corollary follows straight from the Definition \ref{def:kappametrisable} of $\cf(\kappa)$-metrisability. It is to be contrasted with the following Theorem 
\ref{boundedmetrisable}.

\subsection{Bounded topology} When $\kappa$ is a regular cardinal, then the topology of ${}^{<\kappa}\square\, 2^\kappa$  is exactly the one
generated by boxes whose support is bounded in $\kappa$. This is not the case for singular cardinals and it turns out that the bounded
topology in such a case has some quite nice properties.

\begin{definition}\label{boundedtopology} Suppose that $\kappa$ is an infinite cardinal. The {\em bounded box topology} on $2^\kappa$ is obtained by
taking as basic clopen sets the sets of the form $[s]$ where $\dom(s)$ is a bounded subset of $\kappa$. We denote the resulting space by
${}^{<<\kappa}\square\, 2^\kappa$.
\end{definition}

\begin{theorem}\label{boundedmetrisable} Suppose that $\kappa$ is an infinite cardinal. Then $X={}^{<<\kappa}\square\, 2^\kappa$ is $\cf(\kappa)$-metrisable and hence
b-ultraparacompact.
\end{theorem}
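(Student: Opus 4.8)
The plan is to exhibit an explicit $\cf(\kappa)$-metrisability function built from a cofinal sequence in $\kappa$, and then to read off b-ultraparacompactness from the ``moreover'' clause of Claim \ref{vanDouwenCl2}. Set $\theta = \cf(\kappa)$ and fix an increasing sequence $\langle \kappa_i : i < \theta\rangle$ of ordinals below $\kappa$ with $\kappa_0 = 0$ and $\sup_{i<\theta}\kappa_i = \kappa$. For $x \in X = 2^\kappa$ and $i < \theta$ define
\[
U(x,i) = [\,x\rest\kappa_i\,] = \{f \in 2^\kappa : f\rest\kappa_i = x\rest\kappa_i\}.
\]
Since each $\kappa_i < \kappa$, the domain $\kappa_i$ is a bounded subset of $\kappa$, so every $U(x,i)$ is a basic clopen set of the bounded box topology, and $U(x,0) = X$.

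First I would verify condition (1) of Definition \ref{def:kappametrisable}, that $\{U(x,i) : i < \theta\}$ is a neighbourhood base at $x$. A basic open set containing $x$ has the form $[s]$ with $x \supseteq s$ and $\dom(s)$ a bounded subset of $\kappa$; then $\sup(\dom(s)) < \kappa$, so by cofinality there is $i < \theta$ with $\dom(s) \subseteq \kappa_i$, whence $U(x,i) = [x\rest\kappa_i] \subseteq [s]$. Next I would check the two coherence conditions, both of which reduce to the transitivity of ``agreeing on an initial segment''. For (2)(a), if $y \in U(x,i)$ then $y\rest\kappa_i = x\rest\kappa_i$, so for $j \ge i$ any $f$ with $f\rest\kappa_j = y\rest\kappa_j$ also satisfies $f\rest\kappa_i = y\rest\kappa_i = x\rest\kappa_i$, giving $U(y,j) \subseteq U(x,i)$. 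For (2)(b), if $y \notin U(x,i)$ then $y\rest\kappa_i \neq x\rest\kappa_i$; were some $f$ to lie in $U(y,j)\cap U(x,i)$ for $j \ge i$, we would get $y\rest\kappa_i = f\rest\kappa_i = x\rest\kappa_i$, a contradiction, so $U(y,j)\cap U(x,i) = \emptyset$. This establishes that $U$ witnesses the $\cf(\kappa)$-metrisability of $X$.

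Finally, since each $U(x,i)$ is a basic clopen set of the bounded topology, the ``moreover'' clause of Claim \ref{vanDouwenCl2} yields that every open cover of $X$ has a disjoint open refinement consisting of basic clopen sets, that is, $X$ is b-ultraparacompact.

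There is no serious obstacle here; the argument is a direct verification. The only points requiring a little care are the neighbourhood-base condition --- where one must use that a bounded subset of $\kappa$ is captured by a single $\kappa_i$, which is exactly where $\cf(\kappa)$ rather than $\kappa$ enters when $\kappa$ is singular --- and the passage from ultraparacompactness to b-ultraparacompactness, which relies on the observation that the chosen sets $U(x,i)$ are themselves basic clopen, so that the refinement produced in Claim \ref{vanDouwenCl2} stays within the distinguished base. It is also worth noting for consistency that when $\kappa$ is regular, $\cf(\kappa)=\kappa$ and the bounded topology coincides with the $(<\kappa)$-box topology, so this recovers (and sharpens to metrisability) the b-ultraparacompactness of Theorem \ref{regular}.
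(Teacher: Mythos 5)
Your proof is correct and is essentially the paper's own argument: the same function $U(x,i)=[x\rest\kappa_i]$ built from a cofinal sequence, the same verification of Definition \ref{def:kappametrisable}, and the same appeal to the ``moreover'' clause of Claim \ref{vanDouwenCl2} for b-ultraparacompactness. The only (cosmetic) difference is that you treat regular and singular $\kappa$ uniformly, whereas the paper handles the regular case by citing Theorem \ref{regular} and runs the explicit construction only for singular $\kappa$.
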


\begin{proof} In the case that $\kappa$ is regular, the conclusion is given by Theorem \ref{regular}, since the $\mathcal U_\alpha$'s witness the 
$\kappa$-metrisability of $X$.

Suppose that $\kappa$ is singular and let $\theta=\cf(\kappa)$. We shall show that $X$ is $\cf(\kappa)$-metrisable. We need to produce a function $U$ from $X\times\theta$ to the open sets of $X$ which exemplifies Definition \ref{def:kappametrisable}. Let $\langle\kappa_i:\;i<\theta\rangle$ be an increasing sequence of cardinals which is cofinal in $\kappa$.
For each $f\in 2^\kappa$ and $i<\theta$ define 
\[
U(f,i)\deq {f\rest\kappa_i}\times\prod_{\kappa_i\le\alpha <\kappa} \{0,1\}.
\]
Then we have that $\{U(f,i):\,i<\theta\}$ is a neighbourhood base at $f$, because if $[s]$ is a basic open set in $X$ containing $f$, there exists
$i<\theta$ such $\dom(s)\subseteq \kappa_i$. Hence $f\in U(f,i)\subseteq [s]$. If $g\in U(f,i)$ and $i\le j$, then clearly $U(g,j)\subseteq U(f,i)$. Finally, 
if $h\notin  U(f,i)$, then $h\rest\kappa_i \neq g\rest\kappa_i$ and hence $U(g,j)\subseteq U(f,i)$ for any $j\ge i$.

Since the sets $U(f,i)$ are basic clopen, Claim \ref{vanDouwenCl2} gives us not only that $X$ is ultraparacompact, but that it is b-ultraparacompact.
$\eop_{\ref{boundedmetrisable}}$
\end{proof}

As mentioned above, Motto Ros proves in Theorem 5.6 of \cite{luca} that for $\kappa$ singular, the space ${}^{<<\kappa}\square\, 2^\kappa$
is not $(<\kappa)$-compact.

\section{Future directions: Combinatorics and logic in connection with paracompact cardinals}\label{sec:combo}
A natural approach to counter-attacking Conjecture \ref{nosingular} is to consider Prikry-type extensions, where $\kappa$ which is a large cardinal in the
ground model $V$ changes its cofinality and becomes singular. The large cardinal property in $V$ guarantees the compactness of the spaces of the form
${}^{<\kappa}\square\, 2^\lambda$, and one could hope that this will carry to at least paracompactness in the extension. This approach presents us with a
major problem, in that properties of open covers are not expressible in the first order logic and hence their preservation in the forcing extensions is
quite difficult to understand. Moreover, even the space ${}^{<\kappa}\square\, 2^\kappa$ will necessarily change in the extension, by the introduction of new subsets to $\kappa$. Hence, in order to obtain some handle on what could be done by forcing in the context of paracompact cardinals, it is desirable to represent the problem as a combinatorial property. It turns out indeed that in the case of spaces of the form 
${}^{<\kappa} \square\,2^\lambda$ b-ultraparacompactness is equivalent to a purely combinatorial property of the cardinals $\kappa$ and $\lambda$, as we show in Theorem \ref{combiparacompact} below. 

\begin{definition}\label{partial-funct} Let $\kappa$ be an infinite cardinal and $\lambda\ge\kappa$. 

{\noindent (1)} By ${\rm Fn}(\lambda,\kappa,2)$ we denote 
the set of all partial functions $s$ from $\lambda$ to 2, with $|\dom(s)|<\kappa$, ordered by $\subseteq$. Hence, an antichain in 
${\rm Fn}(\lambda,\kappa,2)$ consists of pairwise incompatible functions.

\smallskip
{\noindent (2)} A set $S\subseteq {\rm Fn}(\lambda,\kappa,2)$ is {\em dense} if
for every $f:\,\lambda \to 2$ there is $s\in S$ with $s\subseteq f$.

\smallskip
{\noindent (3)} For $R,S\subseteq {\rm Fn}(\lambda,\kappa,2)$ we say that {\em $R$ refines $S$} and write $S\le R$ iff for every $f\in R$ there is
$g\in S$ with $g\subseteq f$.

\end{definition}

\begin{theorem}\label{combiparacompact} Let $\kappa$ be an infinite cardinal and $\lambda\ge\kappa$. Then
${}^{<\kappa} \square\,2^\lambda$ is b-paracompact iff for every $S\subseteq {\rm Fn}(\lambda,\kappa,2)$ which is dense, there is a refinement $R\ge S$ which is an antichain in ${\rm Fn}(\lambda,\kappa,2)$ and still dense. \end{theorem}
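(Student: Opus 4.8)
The plan is to recognise the statement as a direct dictionary translation between the topological language of basic clopen covers of $X={}^{<\kappa}\square\,2^\lambda$ and the combinatorial language of the poset ${\rm Fn}(\lambda,\kappa,2)$. Recall that the canonical base of $X$ consists exactly of the sets $[t]=\{f\in 2^\lambda:\,t\subseteq f\}$ for $t\in{\rm Fn}(\lambda,\kappa,2)$, that every nonempty basic open box is of this form (the only nonempty open subsets of discrete $2$ are $\{0\}$, $\{1\}$, $\{0,1\}$), and that $t\mapsto[t]$ is a bijection onto these sets. First I would record the three elementary equivalences that drive everything: (i) $[t]\subseteq[s]$ iff $s\subseteq t$, so that topological refinement corresponds to refinement $R\ge S$ in the poset; (ii) $[t_0]\cap[t_1]=\emptyset$ iff $t_0,t_1$ are incompatible, so that a pairwise disjoint family corresponds to an antichain; and (iii) $\{[s]:s\in S\}$ covers $X$ iff $S$ is dense. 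Each is a one-line check: for (i), any total extension of $t$ lies in $[t]$, which forces $\dom(s)\subseteq\dom(t)$ and agreement on $\dom(s)$; for (ii), $t_0\cup t_1$ is a function iff the intersection is nonempty; (iii) is the definition of dense unwound. Throughout I read ``b-paracompact'' in the statement as b-ultraparacompactness of Definition \ref{b-paracompact}, the two being used synonymously in the paper.

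For the forward direction, suppose $X$ is b-ultraparacompact and let $S$ be dense. By (iii), $\{[s]:s\in S\}$ is an open cover of $X$ by basic clopen sets, so it admits an open refinement that covers $X$ and consists of pairwise disjoint basic clopen sets. Writing this refinement as $\{[t]:t\in R\}$ for the uniquely determined $R\subseteq{\rm Fn}(\lambda,\kappa,2)$, equivalences (i)--(iii) turn ``refines $\{[s]:s\in S\}$'', ``pairwise disjoint'', and ``covers $X$'' into ``$R\ge S$'', ``$R$ is an antichain'', and ``$R$ is dense'', which is precisely the desired conclusion.

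For the converse, assume the combinatorial property and let $\OO$ be an arbitrary open cover of $X$. First I would replace $\OO$ by a basic clopen cover refining it: for each $x\in X$ pick $O_x\in\OO$ with $x\in O_x$ and, using that the $[t]$ form a base, a condition $s_x$ with $x\in[s_x]\subseteq O_x$; then $S=\{s_x:x\in X\}$ is dense and $\{[s]:s\in S\}$ refines $\OO$. Applying the hypothesis to $S$ yields an antichain $R\ge S$ that is still dense, and by the dictionary $\{[t]:t\in R\}$ is then a pairwise disjoint family of basic clopen sets covering $X$; by transitivity of refinement it refines $\OO$, witnessing b-ultraparacompactness.

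The argument carries no genuine obstacle---it is pure bookkeeping---so the only points deserving care are structural rather than hard: that the disjoint clopen refinement guaranteed by b-ultraparacompactness may be taken inside the canonical base (so that it really corresponds to a set $R$ of conditions), and the observation that a refinement in the paracompactness sense is itself a cover, which is what makes ``refinement'' match ``dense'' in clause (iii). Both are immediate once the bijection $t\mapsto[t]$ and equivalences (i)--(iii) are in place, so the entire proof is the verification of that dictionary.
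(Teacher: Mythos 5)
Your proposal is correct and follows essentially the same route as the paper: both set up the dictionary $t\mapsto[t]$ identifying refinement with $\le$, disjointness with incompatibility, and covering with density, and then translate each direction. Your write-up is in fact slightly more complete, since the paper dispatches the converse with ``the proof of the other direction is similar'' while you spell out the preliminary step of refining an arbitrary open cover to a basic clopen one.
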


\begin{proof} The proof is really just a matter of translation. In the forward direction, let $S\subseteq {\rm Fn}(\lambda,\kappa,2)$ be dense and for $s\in S$ let $[s]=\{f\in 2^\lambda:\, s\subseteq \lambda\}$ denote the
basic open set determined by $s$. Since $S$ is dense, the family $\mathcal S=\{ [s]:\, s\in S\}$ is a cover of ${}^{<\kappa} \square\,2^\lambda$  by basic open sets.
By b-paracompactness, there is a refinement $\mathcal R$ of $\mathcal S$ which consists of disjoint basic clopen sets. 

Let
$R=\{r\in {\rm Fn}(\lambda,\kappa,2):\, [r]\in \RR\}$. Since $\RR$ is a refinement of $\mathcal S$, we have that $S\le R$. Since $\RR$ is a cover we have that
$R$ is dense, and since the sets in $\RR$ are pairwise disjoint, the functions in $R$ are pairwise incompatible.

The proof of the other direction is similar.
$\eop_{\ref{combiparacompact}}$
\end{proof}

The combinatorial property from Theorem $\ref{combiparacompact}$ brings us one step closer to considering paracompact cardinals in forcing extensions,
but we have not been able to do much more for the moment. Perhaps that it can be approached in the future work. We turn to a final consideration, which is to do with logic. Namely, compactness of box products are closely connected with the compactness of abstract logics, in that for example, $\kappa$ is a strongly compact cardinal iff the logic $\LL_{\kappa,\kappa}$ is $(<\kappa)$-compact (this was in fact the original motivation for the definition of strongly compact cardinals, see \cite{Kanamori} for the full history and equivalences). A similar characterisation exists for weakly compact cardinals. Hence we can ask the
following question:

\begin{question}\label{paracopactinlogic} Suppose that $\kappa$ is a paracompact cardinal. What does this imply in terms of the logic 
$\LL_{\kappa,\kappa}$ ? 
\end{question}

\section{Appendix: Topological Reduced Products and the Nabla}\label{reduced_products}\label{ap:nabla}
The innovation of the proof of Kunen's Theorem (Theorem \ref{Kunenth}) is the use of the technique of reduced topological products, which was used again by Miller to prove Theorem \ref{T:M82}. We give generalisations of these proofs to uncountable cardinals, first taking a moment to review the technique.
The details are developed in the articles of Bankston \cite{MR458351} and \cite{MR0454898}.

\begin{definition}\label{def.topologicalreducedproduct} Let $\langle X_\alpha: \,\alpha
<\lambda\rangle$ be a sequence of topological spaces, and let $\D$ be a filter on $\lambda$. 
The \emph{topological reduced product via} $\D$ of the $X_\alpha$'s is 
\[
{}^{\D}\bigbox_{\alpha<\lambda} X_\alpha = \bigbox_{\alpha<\lambda} X_\alpha / \sim_{\D} \,,
\]
where for $x, y \in \bigbox_{\alpha<\lambda} X_\alpha$ we define 
\[x \sim_{\D} y \iff
\{\alpha: x_\alpha = y_\alpha\} \in \D.
\] 
\end{definition}

In Definition \ref{def.topologicalreducedproduct}, in the case $\D = \{\lambda\}$, we have
${}^\D\bigbox_{\alpha<\lambda} X_\alpha = \bigbox_{\alpha<\lambda} X_\alpha$. If
 $\D$ is an ultrafilter then ${}^\D\bigbox_{\alpha<\lambda} X_\alpha$ is referred to 
 as a \emph{topological ultraproduct}. 

 The natural projection $q_\D : \bigbox_{\alpha<\lambda} X_\alpha \to 
{}^\D\bigbox_{\alpha<\lambda} X_\alpha$ 
is clearly an open map, and 
\[
q_\D\left(\prod_{\alpha<\lambda} O_\alpha\right) = {}^\D\bigbox_{\alpha<\lambda} O_\alpha = 
\Big\{[x]_\D : \{\alpha: x_\alpha \in O_\alpha\}\in\D\Big\}
\] 
is called an \emph{open ultrabox}. One can note that if $[x]_\D \in  {}^\D\bigbox_{\alpha<\lambda} O_\alpha$ and 
$y \sim_{\D} x$, then $[y]_\D \in  {}^\D\bigbox_{\alpha<\lambda} O_\alpha$ also, since $\D$ has the finite intersection 
property. 

If $\B_\alpha$ is a basis for the 
topology on $X_\alpha$ and if $\D$ is any filter then 
${}^\D\bigbox_{\alpha<\lambda} \B_\alpha = \{{}^\D\bigbox_{\alpha<\lambda} O_\alpha: O_\alpha\in\B_\alpha 
\text{ for all }\alpha < \lambda\}$ is a basis for the reduced product topology.

\begin{example}\label{Ex:reduced}
Let $\D$ be the filter of \emph{cofinite subsets of} $\lambda$. In this case, $x \sim_{\D} y$ if and only if 
$x_\alpha = y_\alpha$ for all but a finite number of $\alpha$'s. For $\lambda = \aleph_0$ this is the 
reduced product in \cite{MR514975} or \cite{MR3414877}. 

Other examples include
the situation when $\kappa$ is any regular cardinal and $\D=\D_\kappa$ is the filter of sets whose complements have cardinality
$<\kappa$, which is the filter we shall work with.
\end{example}

From this point on, let us fix a regular cardinal $\kappa$, let $\D=\D_\kappa$ and suppose that $\lambda,\theta\ge\kappa$ (we shall mostly be interested in the cases $\lambda=\kappa$ or $\lambda=\kappa^+$ and similarly for $\theta$). Consider ${}^{<\theta} \bigbox_{\alpha<\lambda} X_\alpha$ for some spaces $\langle X_\alpha:\,\alpha <\lambda\rangle$. 

\begin{definition}[Nabla]\label{nabla} Let $\sim_\kappa$ be the equivalence relation on $\prod_{\alpha<\lambda} X_\alpha$
defined by 
\[
x\sim_\kappa y\iff |\{\alpha: x_\alpha\neq y_\alpha\}| < \kappa.
\]
The space ${}^{<\theta} \nabla_{\alpha<\lambda} X_\alpha =  {}^{<\theta} \bigbox_{\alpha<\lambda} X_\alpha/{\sim_\kappa}$ is the quotient topology on $X(\theta,\lambda)$ induced by the equivalence classes 
$\{[ x]_{\sim_\kappa} :\, x\in {}^{<\theta} \bigbox_{\alpha<\lambda} X_\alpha\}$. Let $q:\,{}^{<\theta} \bigbox_{\alpha<\lambda} X_\alpha
\to {}^{<\theta} \nabla_{\alpha<\lambda} X_\alpha$ be the quotient map.

If $Y\subseteq {}^{<\theta} \bigbox_{\alpha<\lambda} X_\alpha$ we define $[Y] = \{[ y]_{\sim_\kappa}:\, y\in Y\}$. A basis of the space ${}^{<\theta} \nabla_{\alpha<\lambda} X_\alpha$
consists of all $[ U]$, where $U ={}^{<\theta} \bigbox_{\alpha<\lambda} X_\alpha$ is an open box in  ${}^{<\theta} \bigbox_{\alpha<\lambda} X_\alpha$.
\end{definition}

\subsection{The case $\lambda=\kappa$, $\theta=\kappa^+$.}\label{subsec:tau} Let $X=\bigbox_{\alpha<\kappa} X_\alpha$.
This case with $\kappa=\aleph_0$ corresponds to the forward direction of Kunen's Theorem (Theorem \ref{Kunenth}).

For each $\alpha<\kappa$, let $\tau_\alpha$ be the canonical 
projection from $X = \bigbox_{\beta < \alpha}X_\beta \times 
\bigbox_{\alpha \leq \beta < \lambda}X_\beta$ onto $\bigbox_{\alpha\leq \beta < \kappa}X_\beta$.

\begin{lemma}\label{L:6.10d} Suppose that for every $\alpha<\kappa$, the space $X_\alpha$ is $(<\kappa)$-open and
$U_\alpha$ is an open subset of $\bigbox_{\alpha\leq \beta < \kappa}X_\beta$.
Then $\bigcap_{\alpha <\kappa} \tau_\alpha^{-1}(U_\alpha)$ is open in $X$.
\end{lemma}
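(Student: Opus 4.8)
The plan is to show that $\bigcap_{\alpha<\kappa}\tau_\alpha^{-1}(U_\alpha)$ is open by exhibiting, around each of its points, a basic open box of the full product $X=\bigbox_{\alpha<\kappa}X_\alpha$ that is contained in the intersection. So fix $x\in\bigcap_{\alpha<\kappa}\tau_\alpha^{-1}(U_\alpha)$. For each $\alpha<\kappa$ the map $\tau_\alpha$ sends $x$ to its tail $x\rest[\alpha,\kappa)$, and the hypothesis $\tau_\alpha(x)\in U_\alpha$ together with the openness of $U_\alpha$ in $\bigbox_{\alpha\le\beta<\kappa}X_\beta$ yields a basic open box $V_\alpha=\prod_{\alpha\le\beta<\kappa}V^\alpha_\beta$ with $x\rest[\alpha,\kappa)\in V_\alpha\subseteq U_\alpha$; in particular $x_\beta\in V^\alpha_\beta$ whenever $\alpha\le\beta<\kappa$.

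The crucial observation is that $V_\alpha$ constrains only the coordinates $\beta\ge\alpha$, so a fixed coordinate $\beta$ is affected by exactly those $V_\alpha$ with $\alpha\le\beta$, of which there are $|\beta+1|<\kappa$ many. I would therefore define, for each $\beta<\kappa$,
\[
B_\beta=\bigcap_{\alpha\le\beta}V^\alpha_\beta .
\]
This is an intersection of fewer than $\kappa$ open subsets of $X_\beta$, hence open because $X_\beta$ is $(<\kappa)$-open, and it contains $x_\beta$ since $x_\beta\in V^\alpha_\beta$ for every $\alpha\le\beta$. Setting $B=\prod_{\beta<\kappa}B_\beta$ then gives a basic open box of the full product $X$ with $x\in B$.

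It remains to check that $B\subseteq\tau_\alpha^{-1}(U_\alpha)$ for every $\alpha<\kappa$. Given $y\in B$ and $\alpha<\kappa$, for each $\beta\ge\alpha$ we have $y_\beta\in B_\beta\subseteq V^\alpha_\beta$ (since $\alpha$ is among the indices $\le\beta$), whence $\tau_\alpha(y)=y\rest[\alpha,\kappa)\in\prod_{\alpha\le\beta<\kappa}V^\alpha_\beta=V_\alpha\subseteq U_\alpha$, so $y\in\tau_\alpha^{-1}(U_\alpha)$. Thus $B\subseteq\bigcap_{\alpha<\kappa}\tau_\alpha^{-1}(U_\alpha)$, and as $x$ was arbitrary the intersection is open.

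The only real subtlety, and the point I would stress, is that the full box product is not in general $(<\kappa^+)$-open, so an intersection of $\kappa$ open sets need not be open. What saves the argument is that each $\tau_\alpha^{-1}(U_\alpha)$ restricts only the coordinates from $\alpha$ onward, so that at every single coordinate merely $<\kappa$ of the $\kappa$ conditions are active; this is precisely what lets the coordinatewise $(<\kappa)$-openness of the $X_\beta$ carry the proof. Everything else is routine bookkeeping.
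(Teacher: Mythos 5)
Your argument is correct and is essentially the paper's own proof: both fix a point in the intersection, choose for each $\alpha$ a basic box $\prod_{\alpha\le\beta<\kappa}V^\alpha_\beta$ inside $U_\alpha$, and then take the coordinatewise intersection $\bigcap_{\alpha\le\beta}V^\alpha_\beta$, which is open precisely because only the $|\beta+1|<\kappa$ conditions with $\alpha\le\beta$ act at coordinate $\beta$ and $X_\beta$ is $(<\kappa)$-open. The closing remark about why this evades the failure of $(<\kappa^+)$-openness of the full box product is exactly the right point to stress.
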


\begin{proof} If $\bigcap_{\alpha <\kappa} \tau_\alpha^{-1}(U_\alpha)=\emptyset$, then we immediately have the desired conclusion, so let us assume that
$\bigcap_{\alpha <\kappa} \tau_\alpha^{-1}(U_\alpha)\neq \emptyset$. Let $p\in \bigcap_{\alpha <\kappa} \tau_\alpha^{-1}(U_\alpha)$. For every $\alpha < \kappa$, the projection $\tau_\alpha$ is continuous, so there exists a basic open neighbourhood $V_\alpha$ of $p$ contained in $\tau_\alpha^{-1}(U_\alpha)$.
The neighbourhood $V_\alpha$ can be taken to be of the form 
\[
V_\alpha = \bigbox_{\beta < \alpha}X_\beta \times 
\bigbox_{\alpha \leq \beta < \kappa}V^\alpha_\beta \,,
\]
where each $V^\alpha_\beta$ is open in $X_\beta$ for $\alpha \leq \beta < \kappa$. For $\beta<\kappa$ let
$W_\beta = \bigcap_{\alpha\leq \beta} V^\alpha_\beta$, which is open in $X_\beta$ since $X_\beta$ is 
$(<\kappa)$-open. Let $W = \bigbox_{\beta < \kappa}W_\beta$, which is open in $X$. However,
$W = \bigcap_{\alpha < \kappa} V_\alpha$, so $W$ is an open neighbourhood of $p$ contained in each
$\tau_\alpha^{-1}(U_\alpha)$.
$\eop_{\ref{L:6.10d}}$
\end{proof}

\begin{corollary}\label{C:6.11d} Suppose that for each $\alpha<\kappa$, the space $X_\alpha$ is $(<\kappa)$-open. Then 
\begin{enumerate}[{\rm (a)}]
\item $\nabla_{\alpha<\kappa} X_\alpha$ is $(<\kappa^+)$-open,
\item If $\chi(X_\alpha)\leq \kappa$ for all $\alpha<\kappa$, then $\nabla_{\alpha<\kappa} 
X_\alpha$ is $(<\mathfrak{b}({\kappa}))$-open.
\end{enumerate}
\end{corollary}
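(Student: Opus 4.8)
The plan is to reduce openness in the Nabla to $\sim_\kappa$-saturated openness downstairs and then, in each part, to manufacture an open neighbourhood of a given point inside the relevant intersection. Recall that a set $O\subseteq\nabla_{\alpha<\kappa}X_\alpha$ is open precisely when $q^{-1}(O)$ is open in $X=\bigbox_{\alpha<\kappa}X_\alpha$, and that $q^{-1}(O)$ is automatically $\sim_\kappa$-saturated (a union of $\sim_\kappa$-classes). Thus, given open sets $O_i$ in the Nabla, writing $G_i=q^{-1}(O_i)$, it suffices to show that the corresponding intersection $\bigcap_i G_i$ of $\sim_\kappa$-saturated open sets is open in $X$. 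The single fact I will exploit repeatedly is that saturation lets me alter any point on fewer than $\kappa$ coordinates without leaving $G_i$: if $y'\in G_i$ and $y\sim_\kappa y'$ then $y\in G_i$.

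For (a) I take a family $\{O_i:i<\kappa\}$ and a point $p\in\bigcap_{i<\kappa}G_i$. For each $i$ I pick a basic open box $B^i\ni p$ with $B^i\subseteq G_i$. The obstacle is that $\bigcap_i B^i$ need not be open, since at a coordinate $\beta$ all $\kappa$ boxes may constrain it while each $X_\beta$ is only $(<\kappa)$-open. I would circumvent this by freeing the initial segment: using saturation, replace $B^i$ by $V_i=\tau_i^{-1}(U_i)$, where $U_i=\prod_{i\le\beta<\kappa}B^i_\beta$ is an open box in the tail $\bigbox_{i\le\beta<\kappa}X_\beta$, so that $V_i$ leaves the coordinates below $i$ unconstrained. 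One checks $p\in V_i\subseteq G_i$: any $y\in V_i$ can be pushed back into $B^i$ by resetting its fewer-than-$\kappa$ coordinates below $i$ to $p$, and saturation then returns $y$ to $G_i$. Now $\bigcap_{i<\kappa}V_i=\bigcap_{i<\kappa}\tau_i^{-1}(U_i)$ is open by Lemma \ref{L:6.10d}; it contains $p$ and sits inside $\bigcap_i G_i$. As $p$ was arbitrary, $\bigcap_i G_i$ is open.

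For (b) the character hypothesis replaces the tail trick by a counting argument keyed to $\mathfrak{b}(\kappa)$. Fix for each $\beta$ a $\subseteq$-decreasing neighbourhood base $\{N^\xi_\beta:\xi<\kappa\}$ at $p_\beta$ (decreasing bases exist because $\chi(X_\beta)\le\kappa$ and $X_\beta$ is $(<\kappa)$-open, so we may intersect $<\kappa$ members). Given $\mu<\mathfrak{b}(\kappa)$ open sets $O_j$ and a point $p\in\bigcap_j G_j$, choose basic boxes $B^j\ni p$ with $B^j\subseteq G_j$, and encode them by $f_j\in{}^\kappa\kappa$, where $f_j(\beta)$ is least with $N^{f_j(\beta)}_\beta\subseteq B^j_\beta$. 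Since $\mu<\mathfrak{b}(\kappa)$, the family $\{f_j:j<\mu\}$ is $\le^\ast_\kappa$-bounded by some $g\in{}^\kappa\kappa$. I then claim the single box $V=\prod_{\beta<\kappa}N^{g(\beta)}_\beta$, which is a genuine basic open set (its support has size $\le\kappa<\kappa^+$), works: for each $j$ and each $y\in V$ one has $N^{g(\beta)}_\beta\subseteq N^{f_j(\beta)}_\beta\subseteq B^j_\beta$, hence $y_\beta\in B^j_\beta$, off the set $\{\beta:f_j(\beta)>g(\beta)\}$ of size $<\kappa$; resetting $y$ to $p$ there lands in $B^j\subseteq G_j$, and saturation gives $y\in G_j$. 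Hence $p\in V\subseteq\bigcap_j G_j$, and $\bigcap_j G_j$ is open.

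I expect the main obstacle to be exactly the one flagged in (a): an unrestricted $\kappa$-indexed intersection of boxes fails to be open because each factor is only $(<\kappa)$-open, and the whole force of the argument is that $\sim_\kappa$-saturation permits shifting the support outward (freeing a bounded initial segment in (a), and absorbing a $<\kappa$-sized error set in (b)), so that Lemma \ref{L:6.10d}, respectively the bounding number $\mathfrak{b}(\kappa)$, can take over. Verifying the containments $p\in V_i\subseteq G_i$ and $p\in V\subseteq G_j$ is routine once saturation is invoked, so I would keep those checks brief.
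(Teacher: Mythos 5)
Your proof is correct and follows essentially the same route as the paper's: part (a) reduces the intersection to one of the form $\bigcap_{\alpha<\kappa}\tau_\alpha^{-1}(U_\alpha)$ and invokes Lemma \ref{L:6.10d} (the paper packages the saturation step into the factorisation $q=q_\alpha\circ\tau_\alpha$, where you do it pointwise with basic boxes), and part (b) is the same $\mathfrak{b}(\kappa)$-bounding argument with decreasing local bases, with your ``reset the $<\kappa$ bad coordinates to $p$'' step playing exactly the role of the paper's observation that every point of $\bigbox_{\alpha<\kappa}V^\alpha_{g(\alpha)}$ is $\sim_\kappa$-equivalent to a point of $\bigbox_{\alpha<\kappa}V^\alpha_{f_\beta(\alpha)}$.
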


\begin{proof} (a)
Let $q_\alpha: \bigbox_{\alpha\leq\beta <\kappa} 
X_\beta \to \nabla_{\alpha<\kappa} X_\alpha$ be the unique map satisfying 
$q = q_\alpha \circ \tau_\alpha$. 

Suppose that we are given sets 
$V_\alpha\,(\alpha<\kappa)$ each of which is open in $\nabla_{\alpha<\kappa} X_\alpha$. Then $q_\alpha^{-1}(V_\alpha)$ is open in $\bigbox_{\alpha\leq\beta < \kappa}X_\beta$, 
so $q^{-1}\big(\bigcap_{\alpha <\kappa} V_\alpha\big)$, which is equal to
$\bigcap_{\alpha <\kappa} \tau_\alpha^{-1}\big(q_\alpha^{-1}(V_\alpha)\big)$ is open in $X$ by Lemma \ref{L:6.10d}.
Consequently, $\bigcap_{\alpha < \kappa} V_\alpha$ is 
open in $\nabla_{\alpha<\kappa} X_\alpha$.

\medskip

{\noindent (b)} Let $p\in X$ and let $\{U_\beta:\beta <\mu\}$ be open neighbourhoods of $q(p)$ in 
$\nabla_{\alpha<\kappa} X_\alpha$, where $\mu <\mathfrak{b}({\kappa})$. 
We show that $\bigcap_{\beta < \mu}U_\beta$ is an open neighbourhood of $q(p)$.

By the assumption that $\chi(X_\alpha)\leq \kappa$ for all $\alpha<\kappa$, for each $\alpha < \kappa$ we can find
a point base of size $\kappa$ at the 
point $p_\alpha$ in $X_\alpha$. We enumerate such a base as $\langle V_\gamma^\alpha: \gamma < \kappa\rangle$ Since each $X_\alpha$ is $(<\kappa)$-open, we can assume 
that $V^\alpha_\gamma \subseteq V^\alpha_{\gamma'}$ whenever $\gamma' \le \gamma$. 
For each $\beta < \mu$, let $f_\beta:\kappa \to \kappa$ be such that
\[
q\left( \bigbox_{\alpha<\kappa} V^\alpha_{f_\beta(\alpha)} \right) \subseteq U_\beta, 
\]
which is well-defined by the definition of the topology.

Since $\mu< \mathfrak{b}(\kappa)$, there exists a function $g:\,\kappa\to\kappa$ with $f_\beta^\ast\le_\kappa g$ for all
$\beta<\mu$. For each such $\beta$, let $\gamma_\beta<\kappa$ be such that for all $\gamma\ge\gamma_\beta$,
$f_\beta(\gamma)\le g(\gamma)$. We then have that for each $\beta<\kappa$,
\[
\bigbox_{\alpha<\kappa} V^\alpha_{g(\alpha)}=\bigbox_{\alpha<\gamma_\beta} V^\alpha_{g(\alpha)}\times \bigbox_{\gamma_\beta\le\alpha<\kappa} V^\alpha_{g(\alpha)}\subseteq \bigbox_{\alpha<\gamma_\beta} V^\alpha_{g(\alpha)}\times \bigbox_{\gamma_\beta\le\alpha<\kappa} V^\alpha_{f_\beta(\alpha)}.
\]
Therefore $q\left( \bigbox_{\alpha<\kappa} V^\alpha_{g(\alpha)} \right)\subseteq q\left( \bigbox_{\alpha<\kappa} V^\alpha_{f_\beta(\alpha)} \right)$
because every point in $\bigbox_{\alpha<\kappa} V^\alpha_{g(\alpha)}$ is $\sim_\kappa$-equivalent to a point in 
$\bigbox_{\alpha<\gamma_\beta} V^\alpha_{f_\beta(\alpha)}\times \bigbox_{\gamma_\beta\le\alpha<\kappa} V^\alpha_{f_\beta(\alpha)}=\bigbox_{\alpha<\kappa} V^\alpha_{f_\beta(\alpha)}$.
Hence, $q\left( \bigbox_{\alpha<\kappa} 
V^\alpha_{g(\alpha)} \right)$ is an open neighbourhood of $q(p)$ contained in 
$\bigcap_{\beta < \mu}U_\beta$.
$\eop_{\ref{C:6.11d}}$
\end{proof}

\subsection{The case $\theta=\lambda=\kappa^+$.} Let $X={}^{<\kappa^+} \bigbox_{\alpha<\kappa^+} X_\alpha$.
This case with $\kappa=\aleph_0$ corresponds to the compact metric case of Miller's Theorem (Theorem \ref{T:M82}). 

\begin{lemma}\label{lm:openess} Suppose that for each $\alpha<\kappa^+$, the space $X_\alpha$ is $(<\kappa)$-open. Then 
${}^{<\kappa^+}\nabla_{\alpha<\kappa^+} X_\alpha$ is $(<\kappa^+)$-open.
\end{lemma}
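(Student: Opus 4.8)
The plan is to verify the defining property of $(<\kappa^+)$-openness directly. Set $Y={}^{<\kappa^+}\nabla_{\alpha<\kappa^+} X_\alpha$ and let $\langle V_i:\,i<\kappa\rangle$ be a family of at most $\kappa$ open subsets of $Y$ (repeating entries if the family is smaller). I want $\bigcap_{i<\kappa}V_i$ to be open, and since the sets $[U]$ with $U$ an open box form a base for $Y$ (Definition \ref{nabla}), it suffices to find, for each $[x]\in\bigcap_i V_i$, an open box $U$ with $x\in U$ and $[U]\subseteq\bigcap_i V_i$.

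First I would pass to basic neighbourhoods: for each $i$ choose an open box $U^i$ with $[x]\in[U^i]\subseteq V_i$ and put $S_i=\supt(U^i)$, so $|S_i|\le\kappa$. The first routine computation is the description of the saturation $q^{-1}([U^i])=\{z:\,|\{\alpha\in S_i:\,z_\alpha\notin U^i_\alpha\}|<\kappa\}$. In particular $[x]\in[U^i]$ means $B_i=\{\alpha\in S_i:\,x_\alpha\notin U^i_\alpha\}$ has size $<\kappa$; replacing $U^i_\alpha$ by $X_\alpha$ for $\alpha\in B_i$ does not change the saturation, hence not $[U^i]$, so I may assume $x\in U^i$. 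The useful reformulation is that for an open box $U$ one has $[U]\subseteq[U^i]$ iff $U\subseteq q^{-1}([U^i])$, i.e. iff every $z\in U$ lies in $U^i_\alpha$ for all but $<\kappa$ many $\alpha\in S_i$.

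The heart of the matter is the coordinatewise construction of $U$. Writing $S=\bigcup_{i<\kappa}S_i$ (so $|S|\le\kappa$) and $I_\alpha=\{i<\kappa:\,\alpha\in S_i\}$, the naive attempt $U_\alpha=\bigcap_{i\in I_\alpha}U^i_\alpha$ fails, because $I_\alpha$ may have size $\kappa$ while $X_\alpha$ is only $(<\kappa)$-open; this is the main obstacle. It is resolved by exploiting the slack in the reduced product: I need $z_\alpha\in U^i_\alpha$ not on all of $S_i$ but only off a set of size $<\kappa$. Concretely, I would fix an injection $h:S\to\kappa$ (possible as $|S|\le\kappa$) and set $J_\alpha=\{i\in I_\alpha:\,i<h(\alpha)\}$, so that $|J_\alpha|\le|h(\alpha)|<\kappa$. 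Then $U_\alpha=\bigcap_{i\in J_\alpha}U^i_\alpha$ is open since $X_\alpha$ is $(<\kappa)$-open, it contains $x_\alpha$ (as $x\in U^i$ for every $i$), and $U=\prod_{\alpha}U_\alpha$ (with $U_\alpha=X_\alpha$ for $\alpha\notin S$) is a basic open box because $|S|\le\kappa<\kappa^+$.

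Finally I would check $U\subseteq q^{-1}([U^i])$ for each $i$. For $z\in U$, a coordinate $\alpha\in S_i$ can violate $z_\alpha\in U^i_\alpha$ only when $i\notin J_\alpha$, that is when $h(\alpha)\le i$; since $h$ is injective, $\{\alpha\in S_i:\,h(\alpha)\le i\}$ has size $\le|i|<\kappa$, so the violation set is indeed small. Hence $[U]\subseteq[U^i]\subseteq V_i$ for every $i$, giving $[x]\in[U]\subseteq\bigcap_i V_i$, and $Y$ is $(<\kappa^+)$-open. The two ideas carrying the proof are the explicit saturation of open boxes and the injective ``spreading'' $h$, which trades the impossible $\kappa$-fold coordinate intersection for $<\kappa$-fold intersections at the cost of a $<\kappa$ error on each $U^i$ — an error the equivalence $\sim_\kappa$ absorbs.
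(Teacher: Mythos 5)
Your proof is correct and follows essentially the same route as the paper's: after passing to basic open boxes $U^i$ around $x$, both arguments enumerate the union $S$ of the supports by $\kappa$ and perform a diagonal intersection, taking at coordinate $\alpha$ only the $<\kappa$ many $U^i_\alpha$ with $i$ below the position of $\alpha$ in the enumeration (your injection $h$ is just the inverse of the paper's enumeration $\gamma\mapsto\alpha_\gamma$), and then absorb the resulting $<\kappa$-sized error set via $\sim_\kappa$. Your write-up is in fact slightly more careful than the paper's, e.g.\ in making the saturation $q^{-1}([U^i])$ explicit and in using an injection rather than an enumeration with repetitions.
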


\begin{proof} Suppose that $\{V_\beta:\,\beta<\kappa\}$ are open in ${}^{<\kappa^+}\nabla_{\alpha<\kappa^+} X_\alpha$. If $\bigcap_{\beta<\kappa} V_\beta=\empty$, then the intersection is an
open set, so suppose that $[x] \in \bigcap_{\beta<\kappa} V_\beta$. For each $\beta < \kappa$ 
let $U^\beta =  {}^{<\kappa^+}\!\bigbox_{\alpha<\kappa^+}  U^\beta_\alpha$ a neighbourhoods of $x$ in  $X$
satisfying $q(U^\beta) \subseteq V_\beta$. Therefore $|\bigcup_{\beta<\kappa}\supt(U_\beta)|\le\kappa$ and it can be enumerated, possibly with repetitions
as $\{\alpha_\gamma:\,\gamma<\kappa\}$.

For each $\gamma<\kappa$ the set
$W_{\alpha_\gamma} = \bigcap_{\beta\leq \gamma} U^\beta_{\alpha_\gamma}$ is open in 
$X_{\alpha_\gamma}$, since each $X_\alpha$ is $(<\kappa)$-open. Set $W_\alpha = X_\alpha$ for all 
$\alpha \notin \{\alpha_\gamma:\gamma<\kappa\}$. Then $x\in W = \prod_{\alpha < \kappa^+} W_\alpha$ 
and $q(W)\subseteq q(U^\beta)$ for all $\beta < \kappa$. Consequently, $q(W) \subseteq 
\bigcap_{\beta<\kappa}V_\beta$.
$\eop_{\ref{lm:openess}}$
\end{proof}

\bibliographystyle{plain}
\bibliography{../../../bibliomaster}

\end{document}